\newtheorem{theorem}{Theorem}[section]
\newtheorem{proposition}{Proposition}[section]
\newtheorem{lemma}{Lemma}[section]
\newtheorem{corollary}{Corollary}[section]
\theoremstyle{definition}
\newtheorem{definition}{Definition}
\newtheorem{remark}{Remark}[section]
\numberwithin{equation}{section}
\newcommand{\beq}{\begin{equation}}
\newcommand{\bea}[1]{\begin{array}{#1} }
\newcommand{\eeq}{ \end{equation}}
\newcommand{\ea}{ \end{array}}
\newcommand{\ep}{\epsilon}
\def \R  {{\mathbb {R}}}
\def \N  {{\mathbb {N}}}
\def \F  {{\mathscr {F}}}
\def \t {{\tau}}
\def\mean#1{\mathchoice%
          {\mathop{\kern 0.2em\vrule width 0.6em height 0.69678ex depth -0.58065ex
                  \kern -0.8em \intop}\nolimits_{\kern -0.4em#1}}%
          {\mathop{\kern 0.1em\vrule width 0.5em height 0.69678ex depth -0.60387ex
                  \kern -0.6em \intop}\nolimits_{#1}}%
          {\mathop{\kern 0.1em\vrule width 0.5em height 0.69678ex
              depth -0.60387ex
                  \kern -0.6em \intop}\nolimits_{#1}}%
          {\mathop{\kern 0.1em\vrule width 0.5em height 0.69678ex depth -0.60387ex
                  \kern -0.6em \intop}\nolimits_{#1}}}
\def\vintslides_#1{\mathchoice%
          {\mathop{\kern 0.1em\vrule width 0.5em height 0.697ex depth -0.581ex
                  \kern -0.6em \intop}\nolimits_{\kern -0.4em#1}}%
          {\mathop{\kern 0.1em\vrule width 0.3em height 0.697ex depth -0.604ex
                  \kern -0.4em \intop}\nolimits_{#1}}%
          {\mathop{\kern 0.1em\vrule width 0.3em height 0.697ex depth -0.604ex
                  \kern -0.4em \intop}\nolimits_{#1}}%
          {\mathop{\kern 0.1em\vrule width 0.3em height 0.697ex depth -0.604ex
                  \kern -0.4em \intop}\nolimits_{#1}}}
\newcommand{\aveint}[2]{\mathchoice%
          {\mathop{\kern 0.2em\vrule width 0.6em height 0.69678ex depth -0.58065ex
                  \kern -0.8em \intop}\nolimits_{\kern -0.45em#1}^{#2}}%
          {\mathop{\kern 0.1em\vrule width 0.5em height 0.69678ex depth -0.60387ex
                  \kern -0.6em \intop}\nolimits_{#1}^{#2}}%
          {\mathop{\kern 0.1em\vrule width 0.5em height 0.69678ex depth -0.60387ex
                  \kern -0.6em \intop}\nolimits_{#1}^{#2}}%
          {\mathop{\kern 0.1em\vrule width 0.5em height 0.69678ex depth -0.60387ex
                  \kern -0.6em \intop}\nolimits_{#1}^{#2}}}
  \newcommand{\e}{\epsilon}
\def\eqn#1$$#2$${\begin{equation}\label#1#2\end{equation}}
\def\charfn_#1{{\raise1.2pt\hbox{$\chi
_{\kern-1pt\lower3pt\hbox{{$\scriptstyle#1$}}}$}}}
\def\qq1{q_*}
\def\q2{q_{**}}
\def\ep{\varepsilon}
\newdimen\vintbar
\def\vint{-\kern-\vintbar\int}
\def\A{\mathcal A}
\def\E{\mathbb E}
\def\F{\mathcal F}
\def\H{\mathcal H}
\def\I{\mathcal I}
\def\N{\mathcal N}
\def\P{\mathbb P}
\def\Q{\mathcal Q}
\def\0{\boldsymbol 0}
\renewcommand{\l}{\left}
\renewcommand{\r}{\right}
\DeclareMathOperator*{\argmin}{arg\,min}
\newtoks\by
\newtoks\paper
\newtoks\book
\newtoks\jour
\newtoks\yr
\newtoks\pages
\newtoks\vol
\newtoks\publ
\def\name[#1, #2]{#1 #2}
\def\ota{{\hbox{\bf ???}}}
\def\cLear{\by=\ota\paper=\ota\book=\ota\jour=\ota\yr=\ota
\pages=\ota\vol=\ota\publ=\ota}
\def\endpaper{\the\by, \textit{\the\paper},
{\the\jour} \textbf{\the\vol} (\the\yr), \the\pages.\cLear}
\def\endbook{\the\by, \textit{\the\book},
\the\publ, \the\yr.\cLear}
\def\endpap{\the\by, \textit{\the\paper}, \the\jour.\cLear}
\def\endproc{\the\by, \textit{\the\paper}, \the\book, \the\publ,
\the\yr, \the\pages.\cLear}
\begin{document}

\title[OPTIMAL SWITCHING PROBLEMS UNDER PARTIAL INFORMATION]{OPTIMAL SWITCHING PROBLEMS UNDER\\ PARTIAL INFORMATION}

\author{K. Li, K. Nystr{\"o}m, M. Olofsson}\thanks{\noindent K. Li and M. Olofsson are financed
by Jan Wallanders och Tom Hedelius Stiftelse samt Tore Browaldhs Stiftelse
through the project {\it Optimal switching problems and their applications in economics and finance}, P2010-0033:1.}

\address{Kai Li \\Department of Mathematics, Uppsala University\\
S-751 06 Uppsala, Sweden}
\email{kai.li@math.uu.se}
\address{Kaj Nystr\"{o}m\\Department of Mathematics, Uppsala University\\
S-751 06 Uppsala, Sweden}
\email{kaj.nystrom@math.uu.se}
\address{Marcus Olofsson\\Department of Mathematics, Uppsala University\\
S-751 06 Uppsala, Sweden}
\email{marcus.olofsson@math.uu.se}

\norefnames 
\nocitenames

\begin{abstract}
\noindent
In this paper we formulate and study an optimal switching problem under partial information. In our model the agent/manager/investor attempts to maximize the expected
reward by switching between different states/investments. However,  he is not fully aware of his environment and only an observation process, which contains partial information about the environment/underlying, is accessible. It is based on the partial information carried by this observation process that all decisions must be made. We propose
a probabilistic numerical algorithm based on dynamic programming, regression Monte Carlo methods, and stochastic filtering theory to compute the value function.
In this paper, the approximation of the value function and the corresponding convergence result are obtained when the underlying and observation processes satisfy the linear Kalman-Bucy setting. A numerical example is included to show some specific features of partial information.
\newline

\noindent
2000  {\em Mathematics Subject Classification: 60C05, 60F25, 60G35, 60G40, 60H35, 62J02.}
\noindent

\medskip

\noindent
{\it Keywords and phrases: optimal switching problem, partial information, diffusion, regression, Monte-Carlo, Euler scheme, stochastic filtering, Kalman-Bucy filter.}
\end{abstract}
\maketitle


\setcounter{equation}{0} \setcounter{theorem}{0}
\section{Introduction}
\noindent
In recent years there has been an increasing activity in the study of optimal switching problems, associated reflected backward stochastic differential
equations and systems of variational inequalities, due to the potential use of these types of models/problems to address the problem of valuing investment opportunities, in an uncertain world, when the investor/producer is allowed to switch between different investments/portfolios or  production modes. To briefly
outline the traditional setting of multi-modes optimal switching problems, we consider a production facility which can be run in $d$ ($d\geq 2$) different production modes and assume that the running pay-offs in the different modes, as well as the cost of switching between modes, depend on an $N_1$-dimensional diffusion process
$X=\left\{X_s^{x,t}\right\}$ which is a solution to the system of stochastic differential equations
\begin{eqnarray}\label{e-SDE}
dX_s^{x,t}&=& b(X_s^{x,t},s) ds+\sigma(X_s^{x,t},s)dW_s,\  t \leq s \leq T,  \notag \\
X_{t}^{x,t}&=&x,
\end{eqnarray}
where $(x,t)\in\mathbb R^{N_1}\times[0,T]$ and $W=\{W_s\}$ is an $m_1$-dimensional Brownian motion, $m_1\leq
N_1$,  defined on a filtered probability space $(\Omega,\mathcal F, \{\mathcal F_t\}_{t \geq 0},\mathbb P)$. In the case of electricity and energy production the process
$X=\left\{X_s^{x,t}\right\}$ can, for instance, be the electricity price, functionals of the electricity price,  or other factors, like the national product or other indices measuring the state of the local and global business cycle, which in turn influence the price. Given $X=\left\{X_s^{x,t}\right\}$ as in \eqref{e-SDE}, let the payoff rate in production mode $i$, at time $s$, be $f_i(X^{x,t}_s,s)$ and let $c_{i,j}(X^{x,t}_s,s)$ be the continuous switching cost for switching from mode $i$ to mode $j$ at time $s$.  A management strategy is a combination of a non-decreasing sequence of $\mathcal F_s$-adapted stopping times $\{\tau_k\}_{k\geq 0}$, where, at time $\tau_k$, the manager decides to switch the production
from its current mode to another one, and a sequence of  $\mathcal F_s$-adapted indicators $\{\xi_k\}_{k\geq 0}$, taking values in $\{1,\dots,d\}$, indicating the mode to which the production is switched. At $\tau_k$  the production is switched from mode $\xi_{k-1}$ (current mode) to $\xi_k$.  When the production is run under a strategy $\mu=(\{\tau_k\}_{k\geq 0},\{\xi_k\}_{k\geq 0})$, over a finite horizon $[0,T]$, the total expected
profit is
defined as%
\begin{eqnarray*}
J(\mu):=\mathbb E\biggl [\biggl(\int\limits_0^Tf_{\mu_s}(X^{x,t}_s,s)ds-\sum_{0 \leq \tau_k\leq T}c_{\xi_{{k-1}},\xi_{k}}(X^{x,t}_{\tau_{k}},\tau_{k})\biggr )\biggr ],
\end{eqnarray*}
where $\mu=(\mu_s)$ is the to $\mu$ associated index process. The traditional multi-modes optimal switching problem now consists of finding an optimal management strategy
$\mu^\ast=(\{\tau_k^\ast\}_{k\geq 0},\{\xi_k^\ast\}_{k\geq 0})$ such that
\begin{eqnarray*}
J(\mu^\ast)=\sup_{\mu}J(\mu).
\end{eqnarray*}
Let from now on  $\mathcal F_s^X$ denote the filtration generated by the process $X$ up to time $s$, i.e., $\mathcal F_s^X=\sigma(X^{x,t}_u:0\leq u\leq s)$. We let $\mathcal A^X=\mathcal A^X[0,T]$ denote the set of all (admissible) strategies $\mu=(\{\tau_k\}_{k\geq 0},\{\xi_k\}_{k\geq 0})$ such that
$0\leq \tau_k \leq T$ for $k\geq 0$, and such that the stopping times $\{\tau_k\}_{k\geq 0}$ and the indicators $\{\xi_k\}_{k\geq 0}$ are adapted to the filtration
$\{\mathcal F_s^X\}_{\{0\leq s\leq T\}}$. Furthermore, given $t\in[0,T]$, $i\in\{1,\dots,d\}$, we let $\mathcal A_{t,i}^X\subset \mathcal A^X$, be the subset of strategies such that $\tau_1 \geq t$ and $\xi_0=i$ a.s. We let
\begin{eqnarray}\label{eq1+}
u_i(x,t)=\sup _{{\mu} \in \mathcal A_{t,i}^X} \mathbb E\biggl [\biggl(\int\limits_t^Tf_{\mu_s}(X_s^{x,t},s)ds- \sum _{ t \leq \tau_k \leq T}c_{\xi_{{k-1}},\xi_{k}}(X_{\tau_{k}}^{x,t},\tau_{k})\biggr )\biggr ].
\end{eqnarray}
Then $u_i:\mathbb R^{N_1}\times[0,T] \to \R$ represents the value function associated with the optimal switching problem on time interval $[t,T]$,
and $u_i(x,t)$ is the optimal expected profit if, at time $t$,
the production is in mode $i$ and the underlying process is at $x$. Under sufficient assumptions it can be proved that the vector $(u_1(x,t),\dots,u_d(x,t))$ satisfies a system of variational inequalities, e.g., see \cite{LNO12}. Using another perspective, the solution to
 the optimization problem can be phrased in the language of reflected backward stochastic differential equations. For these connections, and the application of multi-mode optimal switching problems to economics and mathematics, see \cite{AH09}, \cite{DH09}, \cite{DHP10}, \cite{HM12}, \cite{HT07}, \cite{PVZ09}, \cite{LNO12} and the references therein. More on reflected backward stochastic differential equations in the context of optimal switching problems can be found in \cite{AF12}, \cite{DH09}, \cite{DHP10}, \cite{HT07} and \cite{HZ10}.

\subsection{Optimal switching problems under partial information} In this paper we formulate and consider a multi-mode optimal switching problem under \textit{partial} or \textit{incomplete} information. While
assuming that the running pay-offs in the different modes of production, as well as the cost of switching between modes, depend on
$X=\left(X_s^{x,t}\right)$, with $X=\left(X_s^{x,t}\right)$ as in \eqref{e-SDE}, we also assume that the manager of the production facility can only
observe an auxiliary, and $X$-dependent process, $Y$, based on which the manager can only retrieve partial information of the $N_1$-dimensional stochastic process $X$.  More precisely, we assume that the manager can only observe an $N_2$-dimensional diffusion process
$Y=\left(Y_s^{y,t}\right)$ which solves the system of stochastic differential equations
\begin{eqnarray}\label{e-SDE+}
dY_s^{y,t}&=& h(X_s,s)ds+dU_s,\  t \leq s \leq T,  \notag \\
Y_{t}^{y,t}&=&y.
\end{eqnarray}
Here $(y,t)\in\mathbb R^{N_2}\times[0,T]$ and  $U=\{U_s\}$ is an $m_2$-dimensional Brownian motion, $m_2\leq N_2$, defined on $(\Omega,\mathcal F,\mathbb P)$ and
independent of  $W=\{W_s\}$. $h$ is assumed to be a continuous and bounded function. From here on we let $\F_s^Y=\sigma(Y^{y,t}_u:0\leq u\leq s)$, denote the filtration generated by the observation process $Y$ up to time $s$. Note that in our set-up we have $\F_s^X \not \subset \F_s^Y$, and hence knowledge of the process $Y$ only gives partial information about the process $X$. We emphasize that although the value of the fully observable process $Y$ is known with certainty at time $t$, the value of the process $X$ is not. The observed process $Y$ acts as a source of information for the underlying process $X$. By construction,
in the formulation of an optimal switching problem under partial information we must restrict our strategies, and decisions at time $t$,
to only depend on the information accumulated from $Y$ up to time $t$. Hence, an optimal switching problem under partial information must differ from the standard optimal switching problem in the sense that in the case of partial information, the value of the running payoff functions $\{f_i(X_t,Y_t,t)\}_i$, and the switching costs $\{c_{i,j}(X_t,Y_t,t)\}_{i,j}$, are not known with certainty at time $t$, even though we know $Y_t$. Hence, in this context the production must be run under incomplete information.

Our formulation of an optimal switching problem under partial information is based on ideas and techniques from stochastic filtering theory. Generally speaking, stochastic filtering theory deals with the estimation of an evolving system (``the signal'' $X$) by using observations which only contain partial information about the system (``the observation'' $Y$).  The solution to the stochastic filtering problem is the conditional
distribution of the signal process $X_{t},$ given the $\sigma$-algebra $\F^{Y}_{t}$, and in the context of stochastic filtering theory the goal is to compute the conditional expectations $\E \left[\phi(X_t)|\F^{Y}_{t}\right]$, for suitably chosen test functions $\phi$. In the following the conditional distribution of $X_{t},$ given  $\F^{Y}_{t}$, is denoted by $\pi _{t}$,  i.e.,
\begin{equation}\label{eq:pi}
\E \left[\phi(X_t)|\F^Y_t\right] := \int_{\mathbb{R}^{N_1}}\phi(x)\pi_t(dx) =: \pi_t(\phi).
\end{equation}
Note that the measure-valued (random) process $\pi _{t}$ introduced in \eqref{eq:pi} can be viewed as a stochastic process taking
values in an infinite dimensional space of probability measures over the state space of the signal. Concerning stochastic filtering we refer to \cite{CR11} and \cite{BC09} for a survey of the topic.

 Based on the above we define,  when the production is run using an $\F^Y_t$-adapted strategy $\mu=(\{\tau_k\}_{k\geq 0},\{\xi_k\}_{k\geq 0})$, over a finite horizon $[0,T]$, the total expected
profit up to time $T$ as
\begin{equation}\label{eq1lu}
\tilde J(\mu)=\mathbb E\biggl [\int\limits_0^T\mathbb E\bigl[f_{\mu_s}(X_s,Y_s,s)|\F^{Y}_{s}\bigr]ds -\sum_{0 \leq \tau_k \leq T} \E \bigl[c_{\xi_{k-1},\xi_{k}} (X_{\tau_k},Y_{\tau_k},\tau_k)|\F^{Y}_{\tau_k}\bigr]\biggr ],
\end{equation}
where the  to $\mu$ associated index process $\mu=(\mu_s)$ is defined in the bulk of the paper. Again we are interested in finding an optimal management strategy
$\mu^\ast=(\{\tau_k^\ast\}_{k\geq 0},\{\xi_k^\ast\}_{k\geq 0})$ which maximizes $\tilde J(\mu)$. Let $\mathcal A^Y=\mathcal A^Y[0,T]$ be defined in analogy with $\mathcal A^X=\mathcal A^X[0,T]$ but with $\F^X_t$ replaced by $\F^Y_t$, and let, for given $t\in[0,T]$, $i\in\{1,\dots,d\}$, $\mathcal A_{t,i}^Y\subset \mathcal A^Y$, be the subset of strategies such that $\tau_1 \geq t$ and $\xi_0=i$ a.s. Given $(y,t)\in\R^{N_2}\times[0,T]$, and a measure of finite mass $\gamma$, we let
\begin{align}\label{eq:valuefcnpartial}
v_i(\gamma,y,t)=\sup_{\mu\in \mathcal A^Y_{t,i}}\E &\biggl [\int _t ^T \pi_s \l ( f_{\mu}(\cdot , Y_s,s) \r ) ds
 \notag\\
 &- \sum _{ t \leq \tau_k \leq T} \pi_{\tau_k} \l
 ( c_{\xi_{k-1},\xi_k} (\cdot ,Y_{\tau_{k}},\tau_k) \r ) \, \vline \, \pi_t=\gamma, Y_t =y \biggr ].
\end{align}
Then $v_i:\mathbb R^{N_2}\times[0,T] \to \R$ represents the value function associated with the optimal switching problem under partial information formulated above, on the time interval $[t,T]$,
and $v_i(\gamma,y,t)$ is the optimal expected profit if, at time $t$,
the production is in mode $i$, $Y_t=y$ and the distribution of $X_t$ is given by $\gamma$,  $X_t\sim \gamma$. Note that for a test function $\phi$, $\E \left[\phi(X_t)|\F^{Y}_{t}\right]$ is an $\F_t^Y$-adapted random variable and hence, the problem in \eqref{eq:valuefcnpartial} can be seen as a full information problem with underlying process $Y$. In fact, it is this connection to an optimal switching problem with perfect information that underlies our formulation of the optimal switching problem under partial information. Furthermore, note that if $X$ is an $\{\mathcal F^Y\}$-adapted process, then \eqref{eq:valuefcnpartial} reduces to \eqref{eq1+}, i.e., to the standard optimal switching problem under complete information.

The object of study in this paper is the value function $v_i(\gamma,y,t)$ introduced in \eqref{eq:valuefcnpartial} and  we emphasize and iterate the probabilistic interpretation of the underlying problem in
\eqref{eq:valuefcnpartial}. In \eqref{eq:valuefcnpartial} the manager wishes to maximize $\tilde J(\mu)$ by selecting an optimal  $\mu^\ast$. However, the manager only has access to the
observed process $Y$. The state $X$ is not revealed and can only be partially inferred through its
impact on the drift of $Y$. Thus, $\mu^\ast$ must be based on the information contained solely in $Y$, i.e., $\mu^\ast$ must be $\F^Y_t$-adapted. Hence, the optimal switching problem under partial information considered here gives a model for  the decision making of a manager who is not fully aware of the economical environment he is acting in. As pointed out in \cite{L09}, one interesting
feature here is the interaction between learning and optimization. Namely, the observation process $Y$
plays a dual role as a source of information about the system state $X$, and as a reward ingredient.
Consequently, the manager has to consider the trade-off between further monitoring $Y$ in order to
obtain a more accurate inference of $X$, vis-a-vis making the decision to switch to other modes of production in case the state of the world is unfavorable.

\subsection{Contribution of the paper} The contribution of this paper is fourfold. Firstly, we are not aware of any papers dealing with optimal switching problems under partial information and we therefore think that our paper represent a substantial contribution to the literature devoted to optimal switching problems and to stochastic optimization problems under partial information. Secondly, we propose  a theoretically sound and entirely simulation-based approach to calculate the value function $v_i(\gamma,y,t) $ in \eqref{eq:valuefcnpartial} when $X$ and $Y$ satisfy the Kalman-Bucy setting of linear stochastic filtering. In particular, we propose
a probabilistic numerical algorithm to approximate $v_i(\gamma,y,t) $ in \eqref{eq:valuefcnpartial} based on dynamic programming and regression Monte Carlo methods. Thirdly, we carry out a rigorous error analysis and prove the convergence of our scheme. Fourthly, we illustrate some of the features of partial information in a computational example. It is known that in the linear Kalman-Bucy setting it is possible to solve the stochastic filtering problem analytically and describe the a posteriori probability distribution $\pi$ explicitly. Although much of the analysis in this paper also holds in the non-linear case, we focus on the, already quite involved, linear setting. In general, numerical schemes for optimal switching problems, already under perfect information, seem to be a less developed area of research and we are only aware of the papers \cite{ACLP12} and \cite{GKP12} where numerical schemes are defined and analyzed rigorously. Our research is influenced by \cite{ACLP12} but our setting is different since we consider an optimal switching problem assuming only partial information.



\subsection{Organization of the paper} The paper is organized as follows. Section 2 is of preliminary nature and we here state the assumptions on the systems in \eqref{e-SDE}, \eqref{e-SDE+}, the payoff rate in production mode $i$, $f_i$, and
the switching costs $c_{i,j}$, assumptions used throughout the paper. Section 3 is devoted to the general description of the stochastic filtering problem and the linear Kalman-Bucy filter. In Section 4 we prove that the value function $v_i$ in \eqref{eq:valuefcnpartial} satisfies the dynamic programming principle. This is the result on which the numerical scheme, outlined in the subsequent sections, rests. Section 5 gives, step by step, the details of the proposed numerical approximation scheme.
In Section \ref{sec:convanalysis} we perform a rigorous mathematical convergence analysis of the proposed numerical approximation scheme and the main result, Theorem \ref{thm:convergence}, is stated and proved. We emphasize that by proving Theorem \ref{thm:convergence} we are able to establish a rigorous error control
for the proposed numerical approximation scheme.
Section 7 contains a numerical illustration of our algorithm and the final section, Section 8, is devoted to a summary and conclusions.

\setcounter{equation}{0} \setcounter{theorem}{0}
\section{Preliminaries and Assumptions}\label{sec:prel}
\noindent
We first state the assumptions on the systems \eqref{e-SDE}, \eqref{e-SDE+}, the payoff rate in production mode $i$, $f_i$, and
the switching costs, $c_{i,j}$, which will be used in this paper. We let $\Q = \{1,\dots, d\}$ denote the (finite) set of available states and we let  $\Q^{-i}=\Q\setminus\{i\}$ for $i\in\{1,...,d\}$. As stated, the profit made (per unit time) in state $i$ is given by the function $f_i$. The cost of switching from state $i$ to state $j$ is given by the function $c_{i,j}$. Focusing on the problem in \eqref{eq1lu}, and in particular on the value function in \eqref{eq:valuefcnpartial}, we need to give a precise definition of the strategy process $\mu=\mu_s$ and the notation $f_{\mu_s}$. Indeed, in our context
a strategy $\mu$, over a finite horizon $[0,T]$, corresponds to a sequence $(\{\tau_k\}_{k\geq 0},\{\xi_k\}_{k\geq 0})$, where
$\{\tau_k\}_{k\geq 0}$ is a sequence of $\mathcal{F}^Y$-adapted stopping times and $\{\xi_k\}_{k\geq 0}$ is a sequence of measurable
random variables taking values in $\Q$ and such that $\xi_k$ is  $\mathcal{F}_{\tau_k}^Y$-adapted. Given $\mu=(\{\tau_k\}_{k\geq 0},\{\xi_k\}_{k\geq 0})$ we let
\begin{eqnarray*}
\mu_s=\xi_0\chi_{[0,\tau_0)}(s)+\sum_{k\geq 0}\xi_k\chi_{[\tau_k,\tau_{k+1})}(s)\in \Q,
\end{eqnarray*}
where $\chi_{B}(s)$ is the indicator function for a measurable set $B\subset\mathbb R$,  be the associated index process. In particular, to each strategy
$\mu=(\{\tau_k\}_{k\geq 0},\{\xi_k\}_{k\geq 0})$ there is an associated index process $\mu=(\mu_s)$ and this is the process used in the definition of
$f_{\mu_s}$.

We  denote by $C^k_b(\R^N)$ the space of all real-valued functions
 $g: \R^N \to \R$ such that $g$ and all its partial derivatives up to order $k$ are continuous and bounded on $\R^N$. Given
 $g\in C^k_b(\R^N)$ we let
$$
\|g\|_{k,\infty} = \sum _{|\alpha| \leq k } \sup_{x \in \R^N} |D^\alpha g(x)|.
$$
Similarly, we denote by $C^{2,1}_b(\R^{N}\times[0,T])$ the space of all real-valued functions
 $g: \R^{N}\times[0,T] \to \R$ such that $g$, $Dg$, $D^\alpha g$, $|\alpha|=2$, and $\partial_tg$ are continuous and bounded on $\R^{N}\times[0,T]$. With a slight abuse of notation we will often write $\|g\|_\infty$ instead of $\|g\|_{0,\infty}$. We denote by $\Gamma(\R^N)$ the space of of all positive measures on $\mathbb R^N$ with finite mass. Considering the systems in \eqref{e-SDE}, \eqref{e-SDE+}, we assume that $b:{\R^{N_1}}\times[0,T] \to \R^{N_1}$, $\sigma :{\R^{N_1}}\times[0,T] \to \mathcal M_{N_1,m_1}$ and $h:{\R^{N_1}}\times[0,T] \to \R^{N_2}$ are continuous and bounded functions. Here
$\mathcal M_{N_1,m_1}$ is the set of all $N_1\times m_1$-dimensional real-valued matrices. Furthermore, concerning the regularity of these functions we assume that
\begin{eqnarray}\label{assump1elel}
b_i,\sigma_{i,j},h\in C^{2,1}_b(\R^{N_1}\times[0,T]).
\end{eqnarray}
Clearly \eqref{assump1elel} implies that
\begin{eqnarray}\label{assump1elel+}
|b_i(x,t)-b_i(y,t)|+|\sigma_{i,j}(x,t)-\sigma_{i,j}(y,t)|+|h(x,t)-h(y,t)|\leq A|x-y|,
\end{eqnarray}
for some constant $A$, $1\leq A<\infty$, for all $ i,j$, and whenever $(x,t)\in\mathbb R^{N_1}\times [0,T]$. Here $|x|$ is the standard Euclidean norm of $x\in\mathbb R^{N_1}$. Given \eqref{assump1elel} and \eqref{assump1elel+}, we see, using the standard existence theory for stochastic differential equations, that
there exist unique strong solutions $X_t$ and $Y_t$ to the systems in \eqref{e-SDE} and \eqref{e-SDE+}.  Concerning regularity of the payoff functions
$\{f_i\}_i :\R^{N_1} \times \R^{N_2}\times[0,T]\to \R$ and the switching costs $c_{i,j}: \R^{N_1} \times \R^{N_2}\times[0,T]\to \R$, we assume that
\begin{eqnarray*}
f_i, c_{ij} \in C^{2,1}_b(\R^{N_1+N_2}\times[0,T])=C^{2,1}_b(\R^{N_1} \times \R^{N_2}\times[0,T]).
\end{eqnarray*}
For future reference we note, in particular, that
\begin{eqnarray}\label{ass:functions2}
(i) &&|f_i(x,y,t)-f_i(x', y',t')| \leq c_1 \l (|x-x'| + |y-y'|+|t-t'| \r ) ,\notag\\
(ii) &&|c_{i,j}(x,y,t)-c_{i,j}(x', y',t')|\leq c_2\l(|x-x'|  + |y-y'|+|t-t'|\r ),
\end{eqnarray}
for some constants $c_1,c_2$, $1\leq c_1,c_2<\infty$ whenever $(x,y,t),(x', y',t')\in\R^{N_1} \times \R^{N_2}\times[0,T]$. Note that \eqref{ass:functions2} implies that $f_i(x,y,t)$ and $c_{i,j}(x,y,t)$, $i,j \in \Q$, are, for $t$ fixed, Lipschitz continuous w.r.t. $x$, uniformly in $y$, and vice versa. Concerning the switching costs we also impose the following structural assumptions on the functions $\{c_{i,j}\}$,
 \begin{eqnarray}\label{ass:switchingcosts}
 (i)&&c_{i,i}(x,y,t)=0\mbox{ for each $i\in\{1,\dots.,d\}$},\notag\\
 (ii)&&c_{i,j}(x,y,t)\geq\nu\mbox{ for some $\nu>0$, when $i,j\in \Q$, $(x,y,t)\in \R^{N_1} \times \R^{N_2}\times[0,T]$},\notag\\
 (iii)&&\mbox{$c_{i_1,i_2}(x,y,t)+c_{i_2,i_3}(x,y,t)\geq c_{i_1,i_3}(x,y,t)$ for all $(x,y,t)\in\R^{N_1} \times \R^{N_2}\times[0,T]$,}\notag\\
 &&\mbox{and for any sequence of indices $i_1$, $i_2$, $i_3$, $i_j\in\{1,\dots,d\}$ for $j\in\{1,2,3\}$.}
\end{eqnarray}
Note that \eqref{ass:switchingcosts} $(iii)$ states that it is always less expensive to switch directly from state $i$ to state $k$ compared to passing  through an intermediate state $j$. We emphasize that we are able to carry out most of the analysis in the paper assuming only \eqref{assump1elel}--\eqref{ass:switchingcosts}. However, there is one instance where we currently need to impose stronger structural restrictions on 
the functions $\{c_{i,j}\}$ to pull the argument through. Indeed, our final argument relies heavily on the Lipschitz continuity of certain value functions, established in Lemma \ref{lemma:Lipschitzm} and Lemma \ref{lemma:Lipschitzy} below. Currently, to prove these lemmas we need the extra assumption that
 \begin{eqnarray}\label{ass:switchingcosts+}
 c_{i,j}(x,y,t)=c_{i,j}(t)\mbox{ when $i,j\in \Q$, $(x,y,t)\in \R^{N_1} \times \R^{N_2}\times[0,T]$}.
\end{eqnarray}
In particular, we need the switching cost to depend only on $t$ and the sole reason is that we need to be able to estimate terms of the type $A_3$ appearing
in the proof of Lemma \ref{lemma:Lipschitzm} (Lemma \ref{lemma:Lipschitzy}). While we strongly believe that these lemmas remain true without \eqref{ass:switchingcosts+}, we also believe that the proofs in the more general setting require more refined techniques beyond the dynamic programming principle, and that we have to resort to the connection to systems of variational inequalities with interconnected obstacles and reflected backward stochastic differential equations.


\setcounter{equation}{0} \setcounter{theorem}{0}
\section{The filtering problem} \label{sec:KB}
\noindent
As outlined in the introduction, the general goal of the filtering problem is to find the conditional distribution of the signal $X$ given the observation $Y$. In particular, given $\phi\in C^{2}_b(\R^{N_1})$,
\begin{equation*}
\pi_t(\phi) = \int_{\mathbb{R}^{N_1}}\phi(x)\pi_t(dx)=\mathbb E\left[\phi(X_t)|\F^Y_t\right],
\end{equation*}
and the aim is to find the (random) measure $\pi _{t}$. Note that $\pi_t$ can be viewed as a stochastic process taking
values in the infinite dimensional space of probability measures over the state space of the signal. Let
\begin{eqnarray*}
a_{i,j}(x,y,t)=\frac 1 2(\sigma(x,y,t)\sigma^\ast(x,y,t))_{i,j},\ i,j\in\{1,\dots,{m_1}\},
\end{eqnarray*}
where $\sigma^\ast$ is the transpose of $\sigma$, and let $\mathcal H$ be the following partial differential operator
\begin{equation*}
    \mathcal{H}=\sum_{i,j=1}^{m_1} a_{i,j}(x,t)\partial_{x_i x_j}+\sum_{i=1}^{m_1} b_i(x,t)\partial_{x_i}+\partial_t.
\end{equation*}
Using this notation and the assumptions stated in Section \ref{sec:prel}, one can show, e.g., see \cite{BC09}, that the stochastic process $\pi=\{\pi_t:t\geq0\}$ satisfies 
\begin{equation}\label{eq.kushner_stratonovich_equation}
d\pi_t(\phi)=\pi_t(\H\phi)dt+\sum_{k=1}^{N_2}[\pi_t(h_k\phi)-\pi_t(h_k)\pi_t(\phi)][dY_t^k-\pi_t(h_k)dt],
\end{equation}
for any $\phi\in C^{2}_b(\R^{N_1})$. Recall that $h=(h_1,\dots,h_{N_2})$ is the function appearing in \eqref{e-SDE+}. The non-linear stochastic PDE in \eqref{eq.kushner_stratonovich_equation} is called the Kushner-Stratonovich equation. Furthermore, it can also be shown, under certain conditions, that the
Kushner-Stratonovich equation has, up to indistinguishability, a pathwise unique solution, e.g., see \cite{BC09}. From here on in we will, to simplify the notation, write
\begin{equation*}
\mathbb E^{\gamma,y,t}\left  [\cdot \right  ] := \mathbb E \left[ \cdot \, \vline \,   \pi_t= \gamma, Y_t=y   \right].
\end{equation*}

\subsection{Kalman-Bucy filter}
It is known that in some particular cases the filtering problem outlined above can be solved explicitly and hence the a posteriori distribution $\pi_t$ is known. In particular, if we assume that the signal $X$ and the observation $Y$ solve linear SDEs, then the solution to the filtering problem can be given explicitly. To be even more specific, assume that the signal $X$ and the observation $Y$ are given by the systems in \eqref{e-SDE}, \eqref{e-SDE+}, with
\begin{equation} \label{eq:KB}
b(x,t)=F_t x, \hspace{0.5cm} \sigma(x,t)=C_t, \hspace{0.5cm} \mbox{and} \hspace{0.5cm} h(x,t)=G_t x,
\end{equation}
 respectively, where $F_t : [0,T] \to \R^{N_1 \times N_1}$, $C_t : [0,T] \to \R^{N_1 \times m_1}$ and $G_t : [0,T] \to \R^{N_2 \times N_1}$ are measurable and locally bounded time-dependent functions. Furthermore, assume that $X_0 \sim \N(m_0,\theta_0)$, where $\N(m_0,\theta_0)$ denotes the $N_1$-dimensional multivariate normal distribution defined by the vector of means $m_0$ and by the covariance matrix $\theta_0$,  is independent of the underlying Brownian motions $W$ and $U$.  Let $m_t:=\E[X_t \, \vline \, \F^Y_t]$ and $\theta _t := \E \l [ \langle X_t - m_t, X_t - m_t \rangle \r ]$ denote the conditional mean and the covariance matrix of  $X_t$, respectively. The following results concerning the filter $\pi_t$ and the processes $m_t$ and $\theta_t$, can be found in, e.g.,  \cite{KB61} or Chapter 6 in \cite{BC09}.
 \begin{theorem} \label{thm:multivariatenormal} Assume \eqref{eq:KB} and that $X_{t_0} \sim \N(m_{t_0},\theta_{t_0})$ for some $t_0\in [0,T]$.
Then the conditional distribution $\pi_t$ of $X_t$, conditional on $\F^Y_t$, is a multivariate normal distribution, $X_t\sim  \N(m_t,\theta_t)$.
\end{theorem}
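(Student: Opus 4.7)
The strategy is to exploit the fundamental fact that $(X_t, \{Y_s\}_{s\in[t_0,t]})$ is a jointly Gaussian family, and then invoke the classical result that the conditional law of a component of a jointly Gaussian vector is itself Gaussian, with affine conditional mean and deterministic conditional covariance.

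First, using the variation-of-constants formula for the linear SDE \eqref{e-SDE} with coefficients \eqref{eq:KB}, let $\Phi(t,s)$ denote the fundamental matrix solving $\tfrac{d}{dt}\Phi(t,s) = F_t\Phi(t,s)$ with $\Phi(s,s)=I$. Then
\begin{equation*}
X_t = \Phi(t,t_0) X_{t_0} + \int_{t_0}^t \Phi(t,u) C_u\, dW_u,
\end{equation*}
and, for $s \in [t_0, t]$,
\begin{equation*}
Y_s = Y_{t_0} + \int_{t_0}^s G_u X_u\, du + (U_s - U_{t_0}).
\end{equation*}
Since $X_{t_0}\sim \mathcal{N}(m_{t_0},\theta_{t_0})$ and $W$, $U$ are mutually independent Brownian motions independent of $X_{t_0}$, any finite linear combination of coordinates of $(X_t, Y_{s_1}, \ldots, Y_{s_n})$ with $s_j\in[t_0,t]$ is a linear functional of the jointly Gaussian data $(X_{t_0}, W, U)$, and hence Gaussian. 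Therefore $(X_t, (Y_s)_{s\in[t_0,t]})$ is a Gaussian process.

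Next, for any finite subdivision $\Pi_n = \{t_0 \leq s_1^n < \cdots < s_{k_n}^n \leq t\}$, the standard Gaussian conditioning formula yields that the law of $X_t$ given $\sigma(Y_{s_1^n}, \ldots, Y_{s_{k_n}^n})$ is Gaussian with an affine conditional mean $\mu_n$ and a deterministic conditional covariance $\Sigma_n$. Choosing a refining sequence $\Pi_n$ of mesh tending to zero so that $\bigvee_n \sigma(Y_{s_j^n}: j \leq k_n)$ generates $\mathcal{F}_t^Y$ (possible by path-continuity of $Y$), L\'evy's martingale convergence theorem applied to the conditional characteristic functions
\begin{equation*}
\Psi_n(\lambda) := \mathbb{E}\bigl[e^{i\langle \lambda, X_t\rangle}\,\bigl|\,\sigma(Y_{s_j^n}:j\leq k_n)\bigr]=\exp\bigl(i\langle\lambda,\mu_n\rangle - \tfrac{1}{2}\langle\lambda,\Sigma_n\lambda\rangle\bigr)
\end{equation*}
gives $\Psi_n(\lambda)\to \mathbb{E}[e^{i\langle\lambda,X_t\rangle}\mid \mathcal{F}_t^Y]$ almost surely. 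A pointwise-in-$\lambda$ limit of characteristic functions of Gaussian form is again of Gaussian form, so $\pi_t$ is Gaussian. The conditional mean is $m_t = \mathbb{E}[X_t\mid\mathcal{F}_t^Y]$ by definition, and since the limiting conditional covariance is a.s.\ deterministic, it must coincide with the unconditional quantity $\theta_t = \mathbb{E}[\langle X_t - m_t, X_t - m_t\rangle]$ by the tower property.

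The main obstacle I anticipate is the rigorous passage from finite-dimensional conditioning to the full filtration $\mathcal{F}_t^Y$: in particular, verifying $\bigvee_n \sigma(Y_{s_j^n}:j\leq k_n) = \mathcal{F}_t^Y$ up to null sets and ensuring that the ``deterministic covariance'' property is stable under this limit. A cleaner alternative route is to plug the test function $\phi_\lambda(x) = e^{i\langle \lambda, x\rangle}$ into the Kushner-Stratonovich equation \eqref{eq.kushner_stratonovich_equation} and derive a closed SPDE for the conditional characteristic function $\pi_t(\phi_\lambda)$. Because the coefficients in \eqref{eq:KB} are affine in $x$, this SPDE admits an exponential-quadratic ansatz of the form $\exp\bigl(i\langle\lambda,m_t\rangle - \tfrac{1}{2}\langle\lambda,\theta_t\lambda\rangle\bigr)$; verifying consistency of this ansatz both confirms the Gaussianity of $\pi_t$ and produces the classical Kalman-Bucy ODEs for $m_t$ and $\theta_t$.
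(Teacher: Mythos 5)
The paper does not actually prove this theorem: it is imported verbatim from the classical literature, with the sentence preceding the statement pointing to \cite{KB61} and Chapter 6 of \cite{BC09} for the proof. So there is nothing in the paper to match your argument against line by line; the relevant comparison is with those references, and your first route (joint Gaussianity of $(X_t,(Y_s)_{s\le t})$ via the variation-of-constants representation, finite-dimensional Gaussian conditioning, and L\'evy's upward martingale convergence applied to the conditional characteristic functions) is essentially the standard proof given there. The argument is sound; the two technical points you flag are real but routine: $\bigvee_n\sigma(Y_{s_j^n})=\sigma(Y_s:s\in[t_0,t])$ follows from path-continuity of $Y$, and to pass from pointwise-in-$\lambda$ a.s.\ convergence to ``the limit is a.s.\ a Gaussian characteristic function'' you should fix a countable dense set of $\lambda$, note that the deterministic covariances $\Sigma_n$ are nonincreasing in the Loewner order (hence convergent), and deduce convergence of $\mu_n$ from that. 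Two smaller remarks: (i) as written your argument covers the case where $X_{t_0}$ is unconditionally Gaussian and independent of the noises (in effect $t_0=0$, which is the setting the paper actually uses); for general $t_0$ the hypothesis must be read as a statement about the law of $X_{t_0}$ conditional on $\F^Y_{t_0}$, and one then conditions on $\F^Y_{t_0}$ throughout; (ii) your identification of the limiting conditional covariance with $\theta_t=\E\l[\langle X_t-m_t,X_t-m_t\rangle\r]$ via the tower property is exactly right and explains why the paper can treat $\theta_t$ as deterministic. Your alternative route through the Kushner--Stratonovich equation \eqref{eq.kushner_stratonovich_equation} with the exponential-quadratic ansatz $\exp\l(i\langle\lambda,m_t\rangle-\tfrac12\langle\lambda,\theta_t\lambda\rangle\r)$ is also legitimate and has the advantage of producing \eqref{eq:theta} and \eqref{eq:theta+} simultaneously, at the cost of needing a uniqueness statement for that SPDE to conclude that the ansatz is the actual conditional distribution.
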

\begin{theorem}\label{thm:filter} Assume \eqref{eq:KB} and that $X_{t_0} \sim \N(m_{t_0},\theta_{t_0})$ for some $t_0\in [0,T]$.
Then the conditional covariance matrix $\theta_t$ satisfies the deterministic matrix equation
\begin{align} \label{eq:theta}
\frac{d\theta_t}{dt} = F_t\theta_t + \theta_t F^\ast_t - \theta_t G^\ast_t G_t \theta_t + C_tC_t^\ast,\mbox{ for $t\in[t_0,T]$},
\end{align}
with initial condition $\theta_{t_0} = \E \l [ \langle  X_{t_0} - \E [X_{t_0}], X_{t_0} - \E [X_{t_0}] \rangle \r ]$,
and the conditional mean $m_t$ satisfies the stochastic differential equation
\begin{align} \label{eq:theta+}
dm_t= (F_t-\theta_tG_t^\ast G_t)m_t dt + \theta_t G^\ast_t dY_t,\mbox{ for $t\in[t_0,T]$},
\end{align}
with initial condition $m_{t_0}= \E [X_{t_0}]$.
\end{theorem}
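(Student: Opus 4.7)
My plan is to derive both the covariance equation \eqref{eq:theta} and the mean equation \eqref{eq:theta+} by applying the Kushner-Stratonovich equation \eqref{eq.kushner_stratonovich_equation} to the coordinate test functions $\phi(x)=x_l$ and the quadratic test functions $\phi(x)=x_lx_p$, then exploiting the Gaussian conclusion of Theorem \ref{thm:multivariatenormal} to close the resulting hierarchy of moment equations. Since the coefficients in \eqref{eq:KB} are only locally bounded and $\phi(x)=x_lx_p$ is not bounded, I would first justify these substitutions by truncating $\phi$ against a smooth cutoff, applying \eqref{eq.kushner_stratonovich_equation}, and then passing to the limit using the Gaussian tail estimates provided by Theorem \ref{thm:multivariatenormal}. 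This localization step, although standard, is worth flagging since the Kushner--Stratonovich equation was stated in the excerpt only for $\phi\in C_b^2(\R^{N_1})$.

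First, I would derive \eqref{eq:theta+}. For $\phi(x)=x_l$, one has $\H\phi=(F_tx)_l$, so $\pi_t(\H\phi)=(F_tm_t)_l$, while $h_k(x,t)=(G_tx)_k$ yields
\[
\pi_t(h_k\phi)-\pi_t(h_k)\pi_t(\phi)=\sum_i(G_t)_{ki}\bigl(\pi_t(x_ix_l)-(m_t)_i(m_t)_l\bigr)=(G_t\theta_t)_{kl},
\]
where the second equality uses the definition of $\theta_t$ (this step does not yet require Gaussianity). Plugging into \eqref{eq.kushner_stratonovich_equation} and assembling coordinates into vectors gives
\[
dm_t=F_tm_t\,dt+\theta_tG_t^{\ast}\bigl[dY_t-G_tm_t\,dt\bigr],
\]
which is precisely \eqref{eq:theta+}.

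Next, for \eqref{eq:theta}, I would apply \eqref{eq.kushner_stratonovich_equation} with $\phi(x)=x_lx_p$. A direct computation gives $\H\phi=(C_tC_t^{\ast})_{lp}+(F_tx)_lx_p+(F_tx)_px_l$, so
\[
\pi_t(\H\phi)=(C_tC_t^{\ast})_{lp}+\bigl(F_tP_t+P_tF_t^{\ast}\bigr)_{lp},
\]
where $P_t:=\pi_t(xx^{\ast})=\theta_t+m_tm_t^{\ast}$. The correction term $\pi_t(h_k\phi)-\pi_t(h_k)\pi_t(\phi)$ requires the third central moments of $\pi_t$, and this is where I would invoke Theorem \ref{thm:multivariatenormal}: since $\pi_t\sim\N(m_t,\theta_t)$, all odd central moments vanish, which gives
\[
\pi_t(h_k x_lx_p)-\pi_t(h_k)\pi_t(x_lx_p)=(G_tm_t)_k\bigl[(\theta_t)_{lp}+(\theta_t)_{lp}\bigr]^{0}+\sum_i(G_t)_{ki}\bigl[(m_t)_p(\theta_t)_{il}+(m_t)_l(\theta_t)_{ip}\bigr],
\]
after expanding the product. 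Combining yields an SDE for $P_t=\pi_t(xx^{\ast})$ with both drift and martingale parts. I would then apply It\^o's formula to $(m_t)_l(m_t)_p$ using \eqref{eq:theta+}, noting that the quadratic variation contributes $(\theta_tG_t^{\ast}G_t\theta_t)_{lp}\,dt$ (since $dY_t$ has quadratic variation $dt\,I$ under its own filtration by L\'evy's characterization, as $Y_t-\int_0^t\pi_s(h)\,ds$ is the innovations Brownian motion). Finally, subtracting $d(m_tm_t^{\ast})$ from $dP_t$ gives $d\theta_t$, and the decisive observation is that \emph{the martingale parts cancel exactly}, together with the $(G_tm_t)(G_tm_t)^{\ast}$ contributions, leaving the deterministic Riccati equation \eqref{eq:theta}.

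The main obstacle, and the step I would carry out most carefully, is the bookkeeping in this last cancellation: one must verify that the stochastic integrand $\theta_tG_t^{\ast}$ coming from \eqref{eq:theta+} combines with the quadratic covariation of $m_tm_t^{\ast}$ and with the innovations-driven term in $dP_t$ so that all $dY_t$-integrals and all $(m_t,G_t m_t)$ cross terms disappear. The residual drift then organizes into $F_t\theta_t+\theta_tF_t^{\ast}-\theta_tG_t^{\ast}G_t\theta_t+C_tC_t^{\ast}$. The initial conditions $\theta_{t_0}$ and $m_{t_0}$ transfer immediately from the hypothesis $X_{t_0}\sim\N(m_{t_0},\theta_{t_0})$. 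Uniqueness of the Riccati ODE \eqref{eq:theta} (standard, since the right-hand side is locally Lipschitz in $\theta$ and the solution remains bounded on $[t_0,T]$ by positive-semidefiniteness) then guarantees that $\theta_t$ is deterministic, completing the proof.
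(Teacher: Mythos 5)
Your derivation is correct, but it is worth noting at the outset that the paper itself does not prove Theorem \ref{thm:filter} at all: it simply cites the classical references (Kalman--Bucy \cite{KB61} and Chapter 6 of \cite{BC09}) and records the statement. Your route --- applying the Kushner--Stratonovich equation \eqref{eq.kushner_stratonovich_equation} to $\phi(x)=x_l$ and $\phi(x)=x_lx_p$, closing the moment hierarchy via the Gaussianity supplied by Theorem \ref{thm:multivariatenormal} (so that the third central moments vanish and $\pi_t(h_k x_lx_p)-\pi_t(h_k)\pi_t(x_lx_p)=(m_t)_l(G_t\theta_t)_{kp}+(m_t)_p(G_t\theta_t)_{kl}$), and then computing $d\theta_t = dP_t - d(m_tm_t^\ast)$ by It\^o's formula against the innovations Brownian motion --- is the standard textbook derivation, and the cancellation you flag as the delicate step does go through: the martingale integrands of $dP_t^{(lp)}$ and of $d((m_t)_l(m_t)_p)$ coincide because $(\theta_tG_t^\ast)_{pk}=(G_t\theta_t)_{kp}$ by symmetry of $\theta_t$, the quadratic-variation term contributes exactly $-(\theta_tG_t^\ast G_t\theta_t)_{lp}\,dt$, and the $(F_tm_t)m_t^\ast$ cross terms in $F_tP_t+P_tF_t^\ast$ cancel against $m_t\,dm_t^\ast+dm_t\,m_t^\ast$, leaving the Riccati drift. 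Two points deserve the care you already signal: (i) the truncation argument needed because \eqref{eq.kushner_stratonovich_equation} is stated only for $\phi\in C^2_b(\R^{N_1})$ while your test functions are unbounded --- this is a genuine gap in a fully rigorous write-up and your proposed fix via cutoffs and Gaussian tails is the right one; and (ii) the determinism of $\theta_t$ follows already from the exact cancellation of the $dY_t$-integrals together with the deterministic initial condition, with uniqueness for the Riccati ODE being a separate (and correct) well-posedness remark. In short, you supply a complete and essentially self-contained argument where the paper supplies only a citation, and your argument is the one the cited sources themselves use.
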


For a positive semi-definite matrix $A$, let $A^{1/2}$ denote the unique positive semi-definite matrix $R$ such that $RR^\ast =A$, where, as above, $R^\ast$ denotes the transpose of $R$. Recalling that
the density defining $\N(m_t,\theta_t)$ in $\R^{N_1}$, at $z$, equals 
$$
(2\pi(\det \theta_t)^{1/N_1})^{-N_1/2}\exp(-(z-m_t)^\ast\theta_t^{-1}(z-m_t)/2),
$$ 
we see that the following result follows immediately from  Theorem \ref{thm:multivariatenormal} and Theorem \ref{thm:filter}.
\begin{corollary}\label{cordir}
The distribution $\pi_t$ is fully characterized by $m_t$ and $\theta_t$ and
\begin{align*}
\pi_t(\phi) = \frac{1}{(2\pi)^{N_1/2}} \int _{\R^{N_1}} \phi (m_t + \theta_t^{1/2} z) \exp(- \dfrac{|z|^2}{2}) dz
\end{align*}
for any $\phi \in C_b^2(\mathbb{R}^{N_1})$.
\end{corollary}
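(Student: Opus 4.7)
The plan is to combine Theorem \ref{thm:multivariatenormal}, which identifies $\pi_t$ as the multivariate normal $\mathcal{N}(m_t,\theta_t)$, with the explicit Gaussian density formula recalled just above the statement. First I would observe that the ``fully characterized'' clause is immediate from Theorem \ref{thm:multivariatenormal}: a multivariate normal distribution is uniquely determined by its mean vector and covariance matrix, and Theorem \ref{thm:filter} then describes how $m_t$ and $\theta_t$ evolve through the deterministic Riccati equation \eqref{eq:theta} and the SDE \eqref{eq:theta+}. It remains to establish the integral representation.

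For the representation, I would start from
\begin{equation*}
\pi_t(\phi)=\int_{\R^{N_1}}\phi(x)\,p_t(x)\,dx,\qquad p_t(x)=\frac{1}{(2\pi)^{N_1/2}(\det\theta_t)^{1/2}}\exp\!\left(-\tfrac{1}{2}(x-m_t)^\ast\theta_t^{-1}(x-m_t)\right),
\end{equation*}
which is just a rewriting of the density recalled in the paragraph before the statement (note $(2\pi(\det\theta_t)^{1/N_1})^{-N_1/2}=(2\pi)^{-N_1/2}(\det\theta_t)^{-1/2}$). Then I would apply the change of variables $x=m_t+\theta_t^{1/2}z$, with $\theta_t^{1/2}$ the unique positive semi-definite square root introduced just before the corollary. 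Since this $\theta_t^{1/2}$ is symmetric ($(\theta_t^{1/2})^\ast=\theta_t^{1/2}$) and satisfies $\theta_t^{1/2}\theta_t^{1/2}=\theta_t$, the quadratic form simplifies as
\begin{equation*}
(x-m_t)^\ast\theta_t^{-1}(x-m_t)=z^\ast\theta_t^{1/2}\theta_t^{-1}\theta_t^{1/2}z=|z|^2,
\end{equation*}
while the Jacobian gives $dx=(\det\theta_t)^{1/2}\,dz$, so that the factor $(\det\theta_t)^{-1/2}$ in $p_t$ is cancelled and the claimed identity drops out.

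The only mildly delicate point is the assumption that $\theta_t$ be strictly positive definite, which is needed both for $\theta_t^{-1}$ in the density and for the change of variables to be a diffeomorphism. The non-degeneracy is guaranteed on $(t_0,T]$ as soon as $C_tC_t^\ast$ is positive definite along the trajectory, by standard results on the Riccati equation \eqref{eq:theta}. In the possibly degenerate case one may interpret the right-hand side by continuity, or equivalently read the identity in the weak sense $\pi_t(\phi)=\E[\phi(m_t+\theta_t^{1/2}Z)]$ for $Z\sim\mathcal{N}(0,I_{N_1})$, which is just the standard Gaussian representation and therefore requires no invertibility at all. I expect no serious obstacle: the whole argument amounts to checking that the linear change of variables turns the density of $\mathcal{N}(m_t,\theta_t)$ into the density of $\mathcal{N}(0,I_{N_1})$, which is routine.
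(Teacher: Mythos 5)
Your proposal is correct and follows essentially the same route as the paper, which simply asserts that the corollary ``follows immediately'' from Theorem \ref{thm:multivariatenormal} and Theorem \ref{thm:filter} together with the recalled Gaussian density, leaving the change of variables $x=m_t+\theta_t^{1/2}z$ implicit; you merely make that routine computation explicit. Your additional remark on possible degeneracy of $\theta_t$ (reading the identity as $\pi_t(\phi)=\E[\phi(m_t+\theta_t^{1/2}Z)]$ with $Z\sim\mathcal{N}(0,I_{N_1})$) is a sensible refinement that the paper does not address but does not change the argument.
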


Note that the covariance matrix $\theta_t$ is deterministic and depends only on the known quantities $F_t$, $G_t$, $C_t$, see \eqref{eq:KB}, and the distribution $\gamma\sim \N(m_{t_0},\theta_{t_0})$ of the starting point of $X$. Hence, once the initial distribution $\pi_{t_0}$ is given, the covariance matrix $\theta_t$ can be determined for all $t \in [t_0,T]$. Furthermore, in the Kalman-Bucy setting, the measure $\pi_t$ is Gaussian and hence fully characterized by its mean $m_t$ and its covariance matrix $\theta_t$. As a consequence, the value function
to the partial information optimal switching problem, $v_{i}(X_t, Y_t,t)$, can in this setting be seen as a function $v_{i}(m_t, \theta_t, Y_t,t) : \R^{N_1} \times \R^{(N_1\times N_1)} \times \R^{N_2}\times[0,T] \to \R$. 
We will, when $\pi$ is a Gaussian measure with mean $m_t$ and covariance matrix $\theta_t$, write
\begin{equation*}
\mathbb E^{m,\theta, y,t}\left  [\cdot \right  ] := \mathbb E \left[ \cdot \, \vline \,   m_t=m, \theta_t=\theta, Y_t=y  \right]\ \text{or}\
\mathbb E^{m,y,t}\left  [\cdot \right  ] := \mathbb E \left[ \cdot
\, \vline \,   m_t= m, Y_t=y\right].
\end{equation*}

\begin{remark}\label{remark:timeshift}
Consider a fixed $t_0\in [0,T]$,  and let $\hat F_s = F_{s+t_0}$, $\hat C_s = C_{s+t_0}$, $\hat G_s = G_{s+t_0}$, whenever $s\in[0,T-t_0]$. Let $\hat f_i(\cdot,\cdot,s) = f_i(\cdot,\cdot,{s+t_0})$ and $\hat c_{i,j}(\cdot,\cdot,s) = c_{i,j}(\cdot,\cdot,s+t_0)$. Let
$X_{t}$, with initial distribution determined by $m_t$ and $\theta_t$, and $Y_{t}$ be given as above for $t\in [t_0,T]$. Furthermore, given $X_{t_0}$ and $Y_{t_0}$, let $\hat X_s$ and $\hat Y_s$
be the unique solutions  to the systems in \eqref{e-SDE} and \eqref{e-SDE+}, with $b$, $\sigma$, $h$, defined as in \eqref{eq:KB} but with
$F,C,G$ replaced by $\hat F,\hat C,\hat G$ and with initial data $\hat X_0=X_{t_0}$ and $\hat Y_0=Y_{t_0}$. In addition, let $\hat m_t, \hat \theta_t$ be defined as in \eqref{eq:theta} and \eqref{eq:theta+}, with $\hat m_0=m_{t_0}$ and $\hat \theta_0=\theta_{t_0}$. Finally, consider the value function
$v_{i}(m_t, \theta_t, Y_t,t)$  and let $\hat v_i (\hat m_t, \hat \theta_t, \hat Y_t,t)$ be the value function of the optimal switching problem on $[0,T-t_0]$, with $ F, G,  C,  f_i, c_{i,j}$ replaced by $\hat F, \hat G, \hat C, \hat f_i, \hat c_{i,j}$. Then
$$v_{i}(m_t, \theta_t, Y_t,t)=\hat v_i (\hat m_{t-t_0}, \hat \theta_{t-t_0}, \hat Y_{t-t_0},{t-t_0})\mbox{ whenever $t\in[t_0,T]$}.$$
In particular,
$$v_{i}(m_{t_0}, \theta_{t_0}, Y_{t_0},{t_0})=\hat v_i (\hat m_{0}, \hat \theta_{0}, \hat Y_{0},{0})$$
and we see that there is no loss of generality to assume that initial observations are made at $t=0$. \end{remark}
\begin{remark}\label{rmk:theta}
As the covariance matrix $\theta_{t}$ solves the deterministic Riccati equation in \eqref{eq:theta}, it is completely determined by the parameters of the model and the covariance matrix of $X_t$ at time $t=0$. Hence, once the initial condition $\theta_0$ is given, $\theta_t$ can be solved deterministically for all $t \in [0,T]$, and consequently viewed as a known parameter. Therefore, we omit the dependence of $\theta_{t}$ in the value function $v_i(m_t,\theta_t,Y_t,t)$ and instead, with a slight abuse of notation, simply write $v_i(m_t,Y_t,t)$.
\end{remark}

\begin{remark}\label{remthetanud}
Although \eqref{eq:theta} is a deterministic ordinary differential equation, it may not be possible to solve it analytically. Therefore, in a general numerical treatment of the problem outlined above one has to use numerical methods to find the covariance matrix $\theta $. The error stemming from the numerical method used to solve \eqref{eq:theta} will have influence on the total error, defined as the absolute value of the difference between the true value function $v_i(m_t,Y_t,t)$ and its numerical approximation derived in this paper. However, as $\theta$ is deterministic, it can be solved off-line and to arbitrary accuracy without effecting the computational efficiency of the main numerical scheme presented in this paper. Therefore, we will throughout this paper consider $\theta$ as exactly known and ignore any error caused by the numerical algorithm used for solving \eqref{eq:theta}.
\end{remark}

\subsection{Connection to the full information optimal switching problem}
As mentioned in the introduction, the problem in \eqref{eq:valuefcnpartial} can interpreted as a full information optimal switching problem with underlying process $Y$. We here expand on this interpretation in the context of Kalman-Bucy filters. Let, using the notation in Remark \ref{rmk:theta},
\begin{eqnarray*}
f_{\mu_s}^z(m,y,t)&=&f_{\mu_s}(m+\theta_t^{1/2} z, y,t),\notag\\
c_{ \xi_{k-1}, \xi_k}^z(m,y,t)&=&c_{\xi_{k-1},\xi_k}(m+\theta_t^{1/2}z, y,t),
\end{eqnarray*}
whenever $z\in \R^{N_1}$, and let
$v_i^z(m,y,t)$ be defined through
\begin{align*}
v_i^z(m,y,t)=\sup_{\mu\in \mathcal A^Y_{t,i}}\E^{m,y,t}  \biggl [\int _t ^T  f_{\mu_s}^z(m_s, Y_s,s) ds- \sum _{ t \leq \tau_k \leq T}  c_{\xi_{k-1},\xi_k}^z (m_{\tau_k} ,Y_{\tau_{k}},\tau_k) \biggr ].
\end{align*}
Furthermore, let $\bar f_{\mu}$ and $\bar c_{\xi_{k-1},\xi_k}$ be defined as
\begin{eqnarray*}
\bar f_{\mu}(m,y,t)&=&\frac{1}{(2\pi)^{N_1/2}}\biggl [\int_{\R^{N_1}}f_{\mu_s}^z(m,y,t)\exp(- \dfrac{|z|^2}{2}) d z\biggr],\notag\\
\bar c_{\xi_{k-1},\xi_k}(m,y,t)&=&\frac{1}{(2\pi)^{N_1/2}}\biggl [\int_{\R^{N_1}}c_{\xi_{k-1},\xi_k}^z(m,y,t)\exp(- \dfrac{|z|^2}{2}) d z\biggr].
\end{eqnarray*}
Then, for $z\in \R^{N_1}$ fixed, $v_i^z(m,y,t)$ is a solution to an optimal switching problem with perfect information. Using the above notation, we see that
\begin{align}\label{eq:valuefcnpartiala}
v_i(m,y,t)=\sup_{\mu\in \mathcal A^Y_{t,i}}\E^{m,y,t} \biggl [\int _t ^T  \bar f_{\mu_s}(m_s, Y_s,s) ds-\sum _{ t \leq \tau_k \leq T}  \bar c_{\xi_{k-1},\xi_k}(m_{\tau_k} ,Y_{\tau_{k}},\tau_k) \biggr]
\end{align}
and that the upper bound
\begin{align}
v_i(m,y,t)\leq \frac{1}{(2\pi)^{N_1/2}}\biggl [\int_{\R^{N_1}}v_i^z(m,y,t)\exp(- \dfrac{|z|^2}{2}) d z\biggr ] \notag
\end{align}
holds. Moreover, based on \eqref{eq:valuefcnpartiala} we see that also $v_i(m,y,t)$ is a solution to an optimal switching problem with perfect information, with payoff rate in production mode $i$, at time $s$, defined by $\bar f_{i}(m_s,y_s,s)$,  and with switching cost, for switching from mode $i$ to mode $j$ at time $s$, defined by $\bar c_{i,j}(m_s,y_s,s)$.

\setcounter{equation}{0} \setcounter{theorem}{0}
\section{The dynamic programming principle}
\noindent
In this section we prove that the value function $v_i$ associated to our
problem satisfies the dynamic programming principle (DPP). This is the
result
on which the numerical scheme outlined in the next section rests. It should be noted that the dynamic programming principle holds for general systems as in \eqref{e-SDE} and \eqref{e-SDE+}, systems which are not necessarily linear.

\begin{theorem} \label{thm:DPP} Let $t\in [0,T]$ and let $v_i(\gamma,y,t)$ be defined as in \eqref{eq:valuefcnpartial}. Then
\begin{align*}
v_i(\gamma,y,t)=\sup_{\mu\in \mathcal A^Y_{t,i}} {\E}^{\gamma,y,t} &\biggl [\int _t ^\tau
\pi_s \left ( f_{\mu_s}(\cdot , Y_s,s) \right ) ds - \sum _{ t \leq \tau_n \leq \tau}
\pi_{\tau_n} \left
 ( c_{\xi_{n-1},\xi_n} (\cdot ,Y_{\tau_n},\tau_n) \right )\\
 &+v_{\xi_\tau}(\pi_\tau,Y_\tau,\tau)\biggr]
 \end{align*}
 for all $\mathcal F^Y$-adapted stopping times $\tau$, $t\leq\tau\leq T$.
\end{theorem}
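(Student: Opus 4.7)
The plan is to establish the DPP via the standard two–sided comparison, proving $v_i(\gamma,y,t)\leq \text{RHS}$ and $v_i(\gamma,y,t)\geq \text{RHS}$ separately. The key structural fact I would rely on throughout is that the enlarged process $(\pi_s, Y_s)$ is strong Markov with respect to $\{\F^Y_s\}$: this is because, by \eqref{eq.kushner_stratonovich_equation} together with \eqref{e-SDE+}, the pair solves an autonomous SDE driven by the innovation process, whose coefficients depend only on $(\pi_s, Y_s, s)$. Thus the shifted filter $\{(\pi_{\tau+s}, Y_{\tau+s})\}_{s\geq 0}$ conditional on $\F^Y_\tau$ is distributed as a fresh copy started from $(\pi_\tau, Y_\tau)$ at time $\tau$.

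\textbf{Upper bound.} I would fix $\mu=(\{\tau_k\},\{\xi_k\})\in \mathcal A^Y_{t,i}$, let $N(\tau)=\inf\{k\geq 0:\tau_k>\tau\}$, and split the cumulative reward at $\tau$. The tail piece, namely $\int_\tau^T \pi_s(f_{\mu_s}(\cdot,Y_s,s))\,ds-\sum_{\tau<\tau_k\leq T}\pi_{\tau_k}(c_{\xi_{k-1},\xi_k}(\cdot,Y_{\tau_k},\tau_k))$, is the reward of the truncated strategy $\mu'=(\{\tau_k\}_{k\geq N(\tau)},\{\xi_k\}_{k\geq N(\tau)})$ on $[\tau,T]$, and $\mu'\in \mathcal A^Y_{\tau,\xi_\tau}$ where $\xi_\tau:=\mu_\tau$. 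Conditioning on $\F^Y_\tau$ and using the Markov property, that conditional expectation equals the expected payoff of $\mu'$ starting from $(\pi_\tau,Y_\tau)$ in mode $\xi_\tau$, which is dominated by $v_{\xi_\tau}(\pi_\tau,Y_\tau,\tau)$. Taking suprema over $\mu\in \mathcal A^Y_{t,i}$ yields the $\leq$ inequality.

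\textbf{Lower bound.} Fix $\varepsilon>0$, pick any $\mu^1=(\{\tau_k^1\},\{\xi_k^1\})\in \mathcal A^Y_{t,i}$, and stop it at $\tau$. For each possible outcome $(\gamma',y',j,t)=(\pi_\tau,Y_\tau,\xi_\tau,\tau)$ on $\{\tau\leq T\}$, the definition of $v_j(\gamma',y',t)$ produces an $\varepsilon$-optimal strategy $\mu^{2,\varepsilon}$ in $\mathcal A^Y_{t,j}$ whose expected post-$\tau$ reward is within $\varepsilon$ of $v_j(\gamma',y',t)$. Using a standard measurable selection argument (e.g.\ Kuratowski--Ryll-Nardzewski applied to the multifunction assigning to $(\gamma',y',j,t)$ the set of $\varepsilon$-optimal continuations, after restricting to a countable dense family of strategies built from predictable partitions of $\F^Y_\tau$), I would obtain a single $\F^Y$-adapted strategy $\mu^{2,\varepsilon}$ depending measurably on $(\pi_\tau,Y_\tau,\xi_\tau)$. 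Concatenating $\mu^1$ on $[t,\tau]$ with $\mu^{2,\varepsilon}$ on $[\tau,T]$ defines an admissible strategy $\mu\in \mathcal A^Y_{t,i}$. Using the Markov property again to identify the conditional expectation of the tail reward with $v_{\xi_\tau}(\pi_\tau,Y_\tau,\tau)-\varepsilon$ from below, and taking the supremum over $\mu^1$ followed by $\varepsilon\downarrow 0$, produces the $\geq$ inequality.

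\textbf{Main obstacle.} The routine bookkeeping (splitting the integral and sum at $\tau$, identifying the truncated/concatenated processes) is harmless, but two points need care. First, the Markov/flow identification of conditional expectations with $v_{\xi_\tau}(\pi_\tau,Y_\tau,\tau)$ requires that the infinite-dimensional state $\pi_s$ genuinely carries a strong Markov structure under $\{\F^Y_s\}$; fortunately this follows from \eqref{eq.kushner_stratonovich_equation} and pathwise uniqueness as cited from \cite{BC09}. Second, the lower bound demands a measurable choice of $\varepsilon$-optimal continuations depending on the random data $(\pi_\tau,Y_\tau,\xi_\tau)$, which is the principal technical hurdle; I would handle it by the usual device of partitioning the state space of $(\pi_\tau,Y_\tau)$ into small Borel cells (finite set of modes $\xi_\tau\in\Q$ is already trivial), selecting one $\varepsilon$-optimal strategy per cell, and verifying that the resulting patched strategy is $\F^Y$-adapted because $\F^Y_\tau$ contains $(\pi_\tau,Y_\tau)$.
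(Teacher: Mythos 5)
Your proposal follows essentially the same route as the paper's proof: the upper bound via truncation at $\tau$ and the Markov/flow property of $(\pi_s,Y_s)$, and the lower bound via concatenation with $\varepsilon$-optimal continuation strategies. The only difference is that you spell out the measurable-selection step for choosing $\mu^{2,\varepsilon}$, which the paper explicitly acknowledges and omits, so your argument is, if anything, slightly more complete on that point.
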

\begin{proof}
Let $\{ Y_s^{y,t}\}_{s \geq t}$ and $\{\pi_s^{\gamma,t}\}_{s \geq t}$ be the unique solutions to the systems in \eqref{e-SDE+}, \eqref{eq.kushner_stratonovich_equation}, with initial conditions $Y_t=y$ and $\pi_t=\gamma$, respectively. Note that these processes, as well as $X$, are Markov processes. Hence, using the strong Markov property of $Y$ and $\pi$ we have that
\begin{equation}\label{eq:markovproperty}
Y^{y,t}_s=Y_s^{\tau, Y^{y,t}_\tau } \hspace{1cm} \mbox{and} \hspace{1cm}\pi^{\gamma,t}_s = \pi_s^{\tau, \pi_\tau^{\gamma,t}},
\end{equation} for any $\mathcal F^Y$-adapted stopping time $\tau \in [0,T]$ and for all $s$ such that $t\leq \tau\leq s$. Let
$$
J(\gamma,y,t,\mu) = \E^{\gamma,y,t}  \l [\int _t ^T \pi_s \l ( f_{\mu_s}(\cdot , Y_s,s) \r ) ds - \sum _{ t \leq \tau_n \leq T} \pi_{\tau_n} \l  ( c_{\xi_{n-1},\xi_n} (\cdot ,Y_{\tau_n},\tau_n) \r ) \r]
$$
for  $\mu \in \A_{t,i}^Y$. Then,
\begin{equation*}
v_i(\gamma,y,t)= \sup_{\mu \in \A_{t,i}^Y} J(\gamma,y,t,\mu).
\end{equation*}
Next, using \eqref{eq:markovproperty} and the law of iterated conditional expectations we see that
\begin{align}
J(\gamma,y,t,\mu)&= \E^{\gamma,y,t} \biggl [\int _t ^\tau \pi_s \l ( f_{\mu_s}(\cdot , Y_s,s) \r ) ds- \sum _{ t \leq \tau_n \leq \tau} \pi_{\tau_n} \l  ( c_{\xi_{n-1},\xi_n} (\cdot ,Y_{\tau_n},\tau_n) \r )\notag\\
 &\qquad\qquad
+ J(\tau, \pi_\tau^{\gamma,t}, Y_\tau ^{y,t},\mu) \biggr ] \notag \\
&\leq \E^{\gamma,y,t} \biggl [\int _t ^\tau \pi_s \l ( f_{\mu_s}(\cdot , Y_s,s) \r ) ds - \sum _{ t \leq \tau_n \leq \tau} \pi_{\tau_n} \l  ( c_{\xi_{n-1},\xi_n} (\cdot ,Y_{\tau_n},\tau_n) \r )\notag\\
 &\qquad\qquad
+  v_{\mu_\tau}( \pi_\tau^{\gamma,t},Y_\tau^{y,t}, \tau)  \biggr ] \notag
\end{align}
for any $\mathcal F^Y$-adapted stopping time $\tau \in [t,T]$ and any strategy $\mu \in \A^Y_{t,i}$. In particular, since $\mu$ is arbitrary in this deduction we see that
\begin{align}\label{eq:ineq1}
v_i(\gamma,y,t)&=\sup _{\mu \in A_{t,i}^Y}J(\gamma,y,t,\mu)\notag\\
&\leq\sup _{\mu \in A_{t,i}^Y} \E^{t,\gamma,y} \biggl [\int _t ^\tau \pi_s \l ( f_{\mu_s}(\cdot , Y_s,s) \r ) ds - \sum _{ t \leq \tau_n \leq \tau} \pi_{\tau_n} \l  ( c_{\xi_{n-1},\xi_n} (\cdot ,Y_{\tau_n},\tau_n) \r )\notag\\
&\qquad\qquad\qquad +  v_{\mu_\tau}( \pi_\tau^{\gamma,t},Y_\tau^{y,t},\tau)  \biggr ],
\end{align}
for any $\mathcal F^Y$-adapted stopping time $\tau \in [t,T]$. To complete the proof it remains to prove the opposite inequality, i.e., to prove that
\begin{align} \label{eq:ineq2}
v_i(\gamma,y,t)&\geq \sup _{\mu \in A_{t,i}^Y} \E^{t,\gamma,y} \biggl [\int _t ^\tau \pi_s \l ( f_{\mu_s}(\cdot , Y_s,s) \r ) ds - \sum _{ t \leq \tau_n \leq \tau} \pi_{\tau_n} \l  ( c_{\xi_{n-1},\xi_n} (\cdot ,Y_{\tau_n},\tau_n) \r )\notag\\
&\qquad\qquad\qquad +  v_{\mu_\tau}( \pi_\tau^{\gamma,t},Y_\tau^{y,t},\tau)  \biggr ].
\end{align}
Consider $(\gamma,y,t)$ and let $\mu \in \A^Y_{t,i}$ and $\tau \in [t,T]$, be a fixed strategy and a fixed $\mathcal F^Y$-adapted stopping time,
respectively. Recall that all stochastic processes are defined on the probability space $(\Omega,\mathcal F,\mathbb P)$. By
the definition of $v_i$ there exists, for any $\e >0$ and for any $\omega\in\Omega$, $\mu^\epsilon(\omega) \in \A^Y_{\tau(\omega),\mu_{\tau(\omega)}}$, such
that
\begin{equation} \label{eq:e-optimalityuu}
v_i(\pi^{t,\gamma}_{\tau(\omega)},Y^{y,t}_{\tau(\omega)},\tau(\omega)) -   \e \leq J(\pi^{t,\gamma}_{\tau(\omega)},Y^{y,t}_{\tau(\omega)},\tau(\omega),\mu^\e(\omega)).
\end{equation} Given
$\mu$, $\tau$, $\mu^\epsilon$, we define, for all $\omega\in\Omega$,
\begin{equation*}
\hat \mu_s(\omega) =
\begin{cases}
\mu_s(\omega) \hspace{1cm} \mbox {for $s \in [t, \tau(\omega)]$} \\
\mu^\e_s(\omega) \hspace{1cm} \mbox {for $s \in [\tau(\omega), T]$}.
\end{cases}
\end{equation*}
Then $\hat \mu \in\A_{t,i}^Y$ and, again using the law of iterated conditional expectations, we obtain that
\begin{align}
v_i(\gamma,y,t) &\geq J(\gamma,y,t,\hat \mu) \notag\\
 &=\E^{\gamma,y,t} \biggl [\int _t ^\tau \pi_s \l ( f_{\mu_s}(\cdot , Y_s,s) \r ) ds - \sum _{ t \leq \tau_n \leq \tau} \pi_{\tau_n} \l  ( c_{\xi_{n-1},\xi_n} (\cdot ,Y_{\tau_n},\tau_n) \r )\notag\\
 &\qquad\qquad  +  J(\pi_\tau^{\gamma,t},Y_\tau^{y,t},  \tau, \mu^\e)  \biggr ]. \notag
\end{align}
Finally, using \eqref{eq:e-optimalityuu} and the above display we deduce that
\begin{align}\label{eq:e-optimalityuu+}
v_i(\gamma,y,t)
&\geq \E^{\gamma,y,t} \biggl [\int _t ^\tau \pi_s \l ( f_{\mu_s}(\cdot , Y_s,s) \r ) ds - \sum _{ t \leq \tau_n \leq \tau} \pi_{\tau_n} \l  ( c_{\xi_{n-1},\xi_n} (\cdot ,Y_{\tau_n},\tau_n) \r )\notag\\
&\qquad\qquad  +  v_{\mu_{\tau}}(\pi_\tau^{\gamma,t},Y_\tau^{y,t},\tau)  \biggr ] -\e.
\end{align}
Since $\mu$, $\tau$ and $\e$ are arbitrary in the above argument we see that \eqref{eq:e-optimalityuu+} implies \eqref{eq:ineq2}. Combining \eqref{eq:ineq1} and \eqref{eq:ineq2} Theorem \ref{thm:DPP} follows. We note that the proof of the DPP here outlined follows the usual lines and perhaps a few additional statements concerning measurability issues could have been included. However, we here omit further details and refer to the vast literature on dynamic programming for exhaustive proofs of similar statements. \end{proof}

\begin{remark}\label{discDPP}
In this section the dynamic programming principle is proven with the assumption that $\{ Y_s^{y,t}\}$ and $\{\pi_s^{\gamma,t}\}$ are continuous in time. However, the approximation scheme introduced in the following section is based on (Euler) discretized versions of these processes. We here just note that
the proof above can be adjusted to also yield the dynamic programming principle in the context of the discretized processes.
\end{remark}


\setcounter{equation}{0} \setcounter{theorem}{0}
\section{The numerical approximation scheme} 
\noindent
In this section we introduce a simulation based numerical scheme for determining  $v_i(\gamma,y,t)$ as in \eqref{eq:valuefcnpartial} and give a step by step presentation of the approximations defining the scheme. Recall that $v_i(\gamma,y,t)$ is the optimal expected payoff, starting from state $i$ at time $t$, with initial conditions $X_t \sim\gamma$ and $Y_t=y$.  We will from now on assume the dynamics of $X$ and $Y$ are given by \eqref{e-SDE} and \eqref{e-SDE+}, respectively, with assumption \eqref{eq:KB} in effect. Consequently, the results of Section \ref{sec:KB} are applicable. Based on Remark \ref{rmk:theta} we in the following write $v_i(\gamma,y,t)=v_i(m, y, t)$. Likewise, we will write $\E^{m,y,t}[\cdot]$ instead of $\E^{\gamma,y,t}[\cdot]$. Furthermore, we can and will, w.l.o.g., assume that the initial distribution $\gamma$ is given at time $t=0$ and hence that the value function $v_{i}$, at time $t$, is a function of the conditional mean $m_t$ (and the deterministic $\theta_t$), based on the observation $X_0 \sim \gamma =\N(m_0,\theta_0)$, see Remark \ref{remark:timeshift}. In other words, we assume that the distribution of $X$ is given at time $t=0$ and the task of the controller is to run the facility using updated beliefs of the conditional mean of the signal, conditional upon the information carried by the observation $Y$.

For the convenience of readers, we in this section list the steps of the proposed numerical scheme and the associated notation.  By Theorem \ref{thm:multivariatenormal} the a posteriori distribution $\pi$ is a Gaussian measure, and in the following we denote by $\pi_t^{m_t}$ the Gaussian measure with mean $m_t$ and covariance matrix $\theta_t$. We emphasize that the outcome of the numerical scheme to be outlined, is an approximation of
\begin{eqnarray}\label{prob}
&&\mbox{$v_i(m,y,0)=v_i(m,\theta,y,0)$ for $(m,\theta,y)\in\mathbb R^{N_1}\times \mathbb R^{N_1\times N_1} \times \mathbb R^{N_2}$}\notag\\
&&\mbox{given and fixed}.
\end{eqnarray}
Based on \eqref{prob} we emphasize that we consider the systems in \eqref{e-SDE} and \eqref{e-SDE+} with initial data at $t=0$, assuming the additional structure in \eqref{eq:KB}. In particular, when considering the systems in \eqref{eq:theta} and \eqref{eq:theta+}, for the calculation in \eqref{prob} and with data  at $t=0$, the initial conditions boil down,  all in all,  to the initial condition $(m,\theta,y)$, at $t=0$, for $(m_t,\theta_t,Y_t)$.

Given $T>0$ fixed, and a large positive integer, $N$, we let $\delta=T/N$, $t_k=k\delta,\ k=0,1,\ldots,N$. We let $\Pi=\Pi^\delta$ denote the naturally defined partition of the interval $[0,T]$ based on $\{t_k\}$, i.e., $\Pi=\Pi^\delta=\{0=t_0<t_1<\cdots<t_N=T\}$. Throughout the paper any discretization of time will be identical to $\Pi=\Pi^\delta$.  The following steps constitute our numerical approximation scheme in the context of \eqref{prob} but starting at $t_k$.
\begin{enumerate}
\item Step 1 -- Bermudan approximation. We restrict the manager to be allowed to switch only at the time points
$\{t_k\}_{k=0}^N$. This results in a Bermudan approximation, $v_i^\Pi(m,y,t_k)$, of $v_i(m,y,t_k)$.
\item Step 2 -- Time discretization and Euler discretization of $m_t$ and $Y_t$. $m_t$ and $Y_t$ are replaced by corresponding discrete versions, also starting at $(m,y)$ at $t=t_k$, based on the Euler scheme and the partition $\Pi=\Pi^\delta$. This results in an approximation,
$v_i^{\check\Pi}(m,y,t_k)$, of $v_i^\Pi(m,y,t_k)$.
\item Step 3 -- Space localization. The processes $m_t$ and $Y_t$ are replaced by versions which are constrained to a bounded convex set $D^\e$. This gives an approximation $v_i^{\check\Pi,\e}(m,y,t_k)$, of $v_i^{\check \Pi}(m,y,t_k)$.
\item Step 4 -- Representation of conditional expectation using true regression. To calculate $v_i^{\check\Pi,\e}(m,y,t_k)$ we use a regression type technique, replacing the future values by a (true) regression. This results in an approximation, $\hat v_i^{\check\Pi,\e}(m,y,t_k)$, of $v_i^{\check\Pi,\e}(m,y,t_k)$.
\item Step 5 -- Replacing the true regression by a sample mean. To calculate $\hat v_i^{\check\Pi,\e}(m,y,t_k)$ we replace
the coefficients in the true regression by their corresponding sample means. This results in an approximation, $\tilde v_i^{\check\Pi,\e}(m,y,t_k)$, of
$\hat v_i^{\check\Pi,\e}(m,y,t_k)$.
\end{enumerate}
The final value produced by the algorithm is  $ \tilde v_i^{\check\Pi,\e}(m,y,t_k)$ and this is an approximation of the true value $v_i(m,y,t_k)$. In the remaining part of this section we will discuss Step~1 -- Step~5 in more detail. The rigorous error analysis is postponed to Section \ref{sec:convanalysis}.

\subsection{Step 1 -- Bermudan approximation}
Let $\A^{Y,\Pi}_{t,i}$ be the set of strategies $$\mu=(\{\tau_n\}_{n\geq 0},\{\xi_n\}_{n\geq 0}) \in \A^Y_{t,i}$$ such that $\tau_n \in \Pi \cap [t,T]$ for all $n$. Based on $\A^{Y,\Pi}_{t,i}$ we let
\[
\begin{split} 
v_i^{\Pi}(m_{t_k},Y_{t_k}, t_k)= \sup _{\mu \in \A^{Y,\Pi}_{t_k,i}} \E^ {m_{t_k},Y_{t_k}, t_k} &\biggl [\int_{t_k}^T \pi_s ^{m_{s}}\l ( f_{\mu_{s}}(\cdot, Y_s,s) \r) ds\notag\\
& - \sum _{t_k \leq \tau_n \leq T} \pi_{\tau_{n}}^{m_{\tau_{n}}} \l (c_{\xi_{n-1},\xi_{{n}}}( \cdot, Y_{\tau_{n}},\tau_{n})\r )\biggr ]
\end{split}
\]
and we refer to $v_i^{\Pi}(m_{t_k},Y_{t_k},t_k)$ as the value function of the Bermudan version of our optimal switching problem under partial information. The difference $|v_{i}(m,y,t_k)-v_i^{\Pi}(m,y,t_k)|$ is quantified in Proposition \ref{lemma:bermudan}. 
\\

\noindent
For future reference we here also introduce what we call the {\it Bermudan strategy}.
\begin{definition}\label{def:approximatingstrategy} Let $\mu=(\{\tau_n\}_{n\geq 0},\{\xi_n\}_{n\geq 0}) \in \A^Y_{t,i}$ and let $\Pi=\Pi^\delta$ be given. Let $\tilde \mu = (\{\tilde \tau_n\}_{n\geq 0},\{\tilde \xi_n\}_{n\geq 0})$ 
be the strategy in $\A^{Y,\Pi}_{t,i}$ defined by
$$\tilde \tau_n = \min \{t \in \Pi:  t \geq \tau_n\}, \quad \tilde \xi_n = \xi_n.$$
Then, $\tilde \mu$ is the {\it Bermudan strategy} associated to $\mu$ and $\Pi$.
\end{definition}

\subsection{Step 2 -- Time discretization and Euler discretization of $m$ and $Y$.} \label{subsec:eulerdisc}
Given the continuous time $t$ we let $\check t = \max \{s \in \Pi : s \leq t \}$. In this step we replace the continuous processes $m_t$ and $Y_t$ with their corresponding discrete Euler approximations. To be specific, we first calculate (pathwise) the Euler approximation of the signal $X$, denoted by $\bar X$ and given by the dynamics
\begin{align}
d \bar X_{t} &=    F_{ \check t} \bar X_{\check t} dt + C_{\check t} dW_{\check t}. \notag
\end{align}
Based on $\bar X$ we then introduce the discrete processes $\bar Y$ and $\bar m$, Euler approximations of $Y$ and $m$, respectively, and given by
\begin{align}
d \bar Y_{t} &=  G_{\check t}(\bar X_{\check t}) dt + dU_{\check t}, \notag
\end{align}
and
\begin{align}
d \bar m_{\check t} &= (F_{\check t} - \theta_{\check t} G^\ast_{\check t} G_{\check t}) \bar m_{\check t} dt + \theta_{\check t } G^\ast_{\check t} d\bar Y_{\check t}. \notag
\end{align}
Based on this we have
\begin{align}
\pi_{\check t}^{\bar m_{\check t}}(\phi) = \frac{1}{(2\pi)^{N_1/2}} \int _{\R^{N_1}} \phi (\bar m_{\check t} + \theta_{\check t}^{1/2} z) \exp(- \dfrac{|z|^2}{2}) dz \notag
\end{align}
for any $\phi \in C_b^2(\mathbb{R}^{N_1})$. Recall that we consider $\theta$ as completely known,  see Remark \ref{remthetanud}, and hence $\theta$ is not subject to discretization. Based on the above processes we let
\begin{align}\label{eq:discretizedvaluefcn}
v_i^{\check\Pi}(\bar m_{t_k},\bar Y_{t_k},t_k)
=&\sup _{\mu \in \A^{Y,\Pi}_{t_k,i}} \E^ {m_{t_k},Y_{t_k},t_k} \biggl [ \int _{t_k}^T\pi_{\check s}^{\bar m _{\check s}} \l ( f_{\mu_{{\check s}}}(\cdot, \bar Y_{\check s},\check s) \r ) ds\notag\\
  &\quad\quad\quad\quad\quad\quad\quad\quad- \sum _{t_k \leq \tau_n \leq T}  \pi_{\tau_{n}}^{ \bar m_{\tau_{n}}} \l (c_{\xi_{n-1},\xi_{{n}}}(\cdot,\bar Y_{\tau_{n}}, \tau_{n}) \r)\biggr] \notag\\
=&\sup _{\mu \in \A^{Y,\Pi}_{t_k,i}} \E^ {m_{t_k},Y_{t_k},t_k} \biggl [ \delta \sum_{\ell=k}^N  \pi_{t_\ell}^{\bar m_{t_{\ell}}} \l ( f_{\mu_{t_\ell}}(\cdot, \bar Y_{t_\ell},t_\ell) \r ) \notag \\
 &\quad\quad\quad\quad\quad\quad\quad\quad- \sum _{t_k \leq \tau_n \leq T} \pi_{\tau_{n}}^{ \bar m_{\tau_{n}}} \l (c_{\xi_{n-1},\xi_{{n}}}(\cdot,\bar Y_{\tau_{n}}, \tau_{n}) \r)\biggr].
\end{align}
$v_i^{\check\Pi}$ is the value function based on the discretized time $\check t$ and the Euler approximations $\bar m_t$ and $\bar Y_t$. The difference between $v_i^{\Pi}(m,y,t_k)$ and $v_i^{\check\Pi}(m,y,t_k)$ is quantified in Proposition \ref{lemma:discretization}. 
\\

\noindent
In the following
we will, in an attempt to slightly ease the notation, omit the bar indicating Euler discretization of $m$ and $Y$ and simply write
$v_i^{\check\Pi}(m_{t_k},Y_{t_k},t_k)$ instead of $v_i^{\check\Pi}(\bar m_{t_k},\bar Y_{t_k},t_k)$ when we believe there is no risk of confusion. In particular, we let the very notation $v_i^{\check\Pi}$ also symbolize that $m$ and $Y$ are discretized as above.

\subsection{Step 3 -- Space localization}
A localization in space will be necessary when estimating the errors induced by Step $4$ and Step $5$ below and we here describe this localization. In particular, to be able to work in bounded space time domains we consider, for a fixed parameter $\e \geq 0$,  (time dependent) domains $\{D^\e_{t}\}_{t\in[0,T]}\subset \R^{N_1} \times \R^{N_2}$ such that $D^\e_{t}$ is a convex domain for all $t\in[0,T]$.  We assume that there exists a constant $C_t^\epsilon$, depending only on $t$ and $\e$, such that $D_t^\epsilon \subset Q_{[-C_t^\epsilon,C_t^\epsilon]}$, where $Q_{[-C_t^\epsilon,C_t^\epsilon]}$ is the hypercube in $\R^{N_1} \times \R^{N_2}$ with sides of length $C_t^\epsilon$. Let $Z^{\e}_t$ be the projection of a generic stochastic process $Z_t$ onto the domain $D_t^\e$, i.e., $Z^{\e}_t=Z_t$ if $Z_t \in \overline{D_t^\e}$, and $Z^{\e}_t$ is the, by convexity of $D^\e_{t}$, naturally defined unique projection of $Z_t$ onto $D_t^\e$, along the normal direction, otherwise. Following \cite{ACLP12}, we assume that $D_t^\e$ can be chosen such that
\begin{equation}\label{eq:projection}
\E \l[ |Z_t - Z_t^{\e}| \r] \leq \e\mbox{ for all $t \in [0,T]$}.
\end{equation}
The space $\{D^\e_{t}\}_{t\in[0,T]}$ can be seen as the domain in which the $(N_1+N_2)$-dimensional process $(m^\e_t,Y^\e_t)$ lives. Roughly speaking, condition \eqref{eq:projection} states that most of the time the $(N_1+N_2)$-dimensional process $(m_t,Y_t)$ will be found inside this domain. As mentioned, this localization in space will be necessary when estimating the errors induced by Step $4$ and Step $5$. In particular, based on \eqref{eq:projection} we will in these steps be able to reuse results on the full information optimal switching problem developed in \cite{ACLP12}. We therefore refer to \cite{ACLP12} for more details on the assumption in \eqref{eq:projection} and a constructive example. The construction above stresses generalities but, although not necessary, we will in the follwoing assume, to be consistent and to minimize notation, that
\begin{eqnarray}\label{dom}
&&\mbox{$D^\e_{t}=D^\e$ for all $t\in[0,T]$, for some $D^\e\subset \R^{N_1} \times \R^{N_2}$, and that}\notag\\
&&\mbox{$D^\e= Q^m_{[-C_m^\e,C_m^\e]}\times Q^Y_{[-C_Y^\e,C_Y^\e]}$ for some constants $C_m^\e$ and $C_Y^\e$.}
\end{eqnarray}
In other words, $D^\e$ is assumed to be a time-independent Cartesian product of hypercubes.
The result of this step is that  $\bar m_t$ and $\bar Y_t$ are replaced by their corresponding
projected versions, $\bar m_t^\epsilon$ and $\bar Y_t^\epsilon$, respectively. We let $v_i^{\check\Pi,\e}(m_{t_k},Y_{t_k},t_k)$ denote the associated value function when $\bar m_t$ and $\bar Y_t$ are replaced by $\bar m_t^\epsilon$ and $\bar Y_t^\epsilon$. In particular, when writing $v_i^{\check\Pi,\e}(m_{t_k},Y_{t_k},t_k)$, $\e$ also indicates that the underlying dynamics is that of $\bar m_t^\epsilon$ and $\bar Y_t^\epsilon$. The error introduced by considering $\bar m_t^\epsilon$ and $\bar Y_t^\epsilon$, i.e., the difference between $v_i^{\check \Pi}(m,y,t_k)$ and $v_i^{\check\Pi,\e}(m,y,t_k)$, is quantified in Proposition \ref{lemma:projected}.

\subsection{Step 4 -- Representation of conditional expectation using true regression}
Note that for the discretized Bermudan version of the optimal switching problem, the value function \eqref{eq:discretizedvaluefcn} can, for any $t_k \in \Pi^\delta$, be simplified to read
\begin{align}\label{eq:discretizedvaluefcn+}
v_i^{\check\Pi,\e}(m_{t_k},Y_{t_k},t_k) =\sup _{\mu \in \A^{Y,\Pi}_{t_k,i}} \E^{m_{t_k},Y_{t_k},t_k} &\biggl [ \delta\sum_{n=k}^{N}  \pi^{m_{t_{n}}}_{t_n}(f_{\mu_{t_n}}(\cdot, Y_{t_n},t_n)) \notag\\
 &- \sum _{ t_k \leq \tau_n \leq T}  \pi^{m_{\tau_{n}}}_{\tau_n}(c_{\xi_{n-1},\xi_{n}}(\cdot, Y_{\tau_{n}},\tau_{n}))\biggr ]  ,
\end{align}
where on the right hand side, consequently, $m_{t_n}=\bar m_{t_n}^\epsilon$,  $Y_{t_n}=\bar Y_{t_n}^\epsilon$. As a result, the DPP for \eqref{eq:discretizedvaluefcn+} in the discretized Bermudan setting, see Remark \ref{discDPP}, reduces to
\begin{align} 
v_i^{\check \Pi,\e}(m_{t_k},Y_{t_k},t_k)= \max &\bigg \{ \delta \pi^{m_{t_{k}}}_{t_k}(f_{i}(\cdot, Y_{t_k},t_k))+ \E^{m_{t_k},Y_{t_k},t_k} \l [ v_i^{\check\Pi,\e}(m_{t_{k+1}},Y_{t_{k+1}},t_{k+1}) \r ], \notag  \\
&\qquad\max _{j \in \Q^{-i}}  \l \{ v_j^{\check\Pi,\e}(m_{t_k},Y_{t_k},t_k) - \pi^{m_{t_{k}}}_{t_k}(c_{i,j}(\cdot, Y_{t_k},t_k)) \r \}   \bigg \}. \notag
\end{align}
Taking \eqref{ass:switchingcosts} $(i)$ and $(iii)$ into account this can be further simplified to
\begin{align}\label{eq:DPPPi}
v_i^{\check \Pi,\e}(m_{t_k},Y_{t_k},t_k)=  \max_{j \in \Q}&\bigg \{\delta \pi^{m_{t_{k}}}_{t_k}(f_{j}(\cdot, Y_{t_k},t_k))+ \E^{m_{t_k},Y_{t_k},t_k} \l [  v_j^{\check \Pi,\e}(m_{t_{k+1}},Y_{t_{k+1}},t_{k+1}) \r ]\notag\\
&-  \pi^{m_{t_{k}}}_{t_k}(c_{i,j}(\cdot, Y_{t_k},t_k)) \bigg \}.
\end{align}
Based on \eqref{eq:DPPPi} the recursive scheme based on the DPP becomes
\begin{align} \label{eq:scheme}
v_i^{\check \Pi,\e}(m_T,Y_T,T)&=0 \notag \\
v_i^{\check \Pi,\e}(m_{t_k},Y_{t_k},t_k) &= \max_{j \in \Q} \bigg \{ \delta\pi^{m_{t_{k}}}_{t_k}(f_{j}(\cdot, Y_{t_k},t_k)) + \E^{m_{t_k},Y_{t_k},t_k} \l [  v_j^{\check \Pi,\e}(m_{t_{k+1}},Y_{t_{k+1}},t_{k+1}) \r ]\notag\\
&\qquad\qquad-  \pi^{m_{t_{k}}}_{t_k}(c_{i,j}(\cdot, Y_{t_k},t_k))  \bigg \}.
\end{align}
An important feature of the scheme in \eqref{eq:scheme} is that $v_i^{\check \Pi,\e}(m_{t_k},Y_{t_k},t_k)$ depends explicitly on the value functions at time $t_{k+1}$, $v_{j}^{\check\Pi,\e}(m_{t_{k+1}},Y_{t_{k+1}},t_{k+1})$. However, these functions are unknown at time $t_k$. Furthermore, the optimal strategy at time $t_k$ also depends directly on this future value. Indeed, at time $t_k$ it is optimal to switch from state $i$ to $j$ if the difference between the expected future value retrieved from being in mode $j$ and the switching cost $c_{i,j}$, is greater than the expected profit made from staying in state $i$. More precisely, at time $t_k$ it is optimal to switch from state $i$ to $j$ if
\begin{align*}
&\ \max _{j \in \Q^{-i}} \biggl \{\delta\pi^{m_{t_{k}}}_{t_k}(f_{j}(\cdot, Y_{t_k},t_k))+ \E^{m_{t_k},Y_{t_k},t_k} \l [  v_j^{\check \Pi}(m_{t_{k+1}},Y_{t_{k+1}},t_{k+1}) \r ]\notag\\
&\qquad\quad-\pi^{m_{t_{k}}}_{t_k}(c_{i,j}(\cdot, Y_{t_k},t_k))\biggr\} \notag \\
&>\delta\pi^{m_{t_{k}}}_{t_k}(f_{i}(\cdot, Y_{t_k},t_k)) + \E^{m_{t_k},Y_{t_k},t_k} \l [  v_i^{\check \Pi}(m_{t_{k+1}},Y_{t_{k+1}},t_{k+1}) \r ].
\end{align*}
If this inequality holds with $>$ replaced by $\leq $ it is optimal to stay in state $i$. To construct an $\F^Y_t$-adapted strategy it is hence necessary to estimate the future expected value at time $t_{k+1}$, based on the information available at time $t_k$, i.e., to estimate
\begin{eqnarray}\label{reg1}
\E^{m_{t_k},Y_{t_k},t_k}\biggl [ v_i^{\check \Pi,\e}(m_{t_{k+1}},Y_{t_{k+1}},t_{k+1})\biggr ].
\end{eqnarray}
Since we, by assumption and through the space localization in Step 3, consider continuous pay-off functions and switching costs on bounded domains, as well as a finite horizon problem, there exist lower and upper bounds for \eqref{reg1}. However, a sound way of finding an approximation of the conditional expectation in \eqref{reg1} is needed, and this approximation is the focus of this step of the numerical scheme proposed. To perform an approximation of the conditional expectation in \eqref{reg1}, we make use of an empirical least square regression model based on simulation. In particular, we consider a test function $\varphi$
and the function
\begin{equation}\label{eq:regressionsimplenotation}
F^{m_{t_k},Y_{t_k},t_k}(\varphi):=\E^{m_{t_k},Y_{t_k},t_k}\biggl [\varphi(m_{t_{k+1}},Y_{t_{k+1}},t_{k+1})\biggr ].
\end{equation}
 We will use a least square regression onto a set of $R$ preselected basis functions, $\{B_r\}_{1\leq r \leq R}$, to create an estimator $\hat F^{m_{t_k},Y_{t_k},t_k}(\varphi)$ of $F^{m_{t_k},Y_{t_k},t_k}(\varphi)$. To elaborate on this, assume that we are given $R$ basis functions $\{B_r(m,y,t)\}_{1\leq r \leq R}$. Given the test function $\varphi$, we define $\hat \lambda_{t_k}(\varphi) =(\hat\lambda_{t_k,1}(\varphi) ,....,\hat\lambda_{t_k,R}(\varphi) ) \in \mathbb \R^R$ as
\begin{equation}\label{rego-}
\hat \lambda_{t_k}(\varphi) := \argmin _{ (\lambda_1,...,\lambda_R)}\E^{m_{t_k},Y_{t_k},t_k} \biggl [ \l (\sum_{r=1}^R \lambda_r B_r(m_{t_k},Y_{t_k},t_k) - \varphi(m_{t_{k+1}},Y_{t_{k+1}},t_{k+1} ) \r )^2 \biggr ],
\end{equation}
and set
\begin{equation} \label{eq:Fhat}
\hat  F^{m_{t_k},Y_{t_k},t_k}(\varphi) :=  \sum_{r=1}^R\hat  \lambda_{t_k ,r}(\varphi) B_r(m_{t_k},Y_{t_k},t_k).
\end{equation}
Recall that in our numerical scheme, $m=\bar m^\epsilon$ and  $Y=\bar Y^\epsilon$. Based on this we define a new set of functions $\{\hat v_i^{\check \Pi,\ep}\}$ through the recursive scheme
\begin{eqnarray} \label{eq:scheme2}
\hat v_i^{\check \Pi,\e}(m_T,Y_T,T)&=&0,\notag\\
\hat v_i^{\check \Pi,\e}(m_{t_k},Y_{t_k},t_k) &=& \max _{j \in \Q} \biggl \{\delta \pi^{m_{t_{k}}}_{t_k}(f_{j}(\cdot, Y_{t_k},t_k))+\hat F^{m_{t_k},Y_{t_k},t_k}(\hat v_j^{\check\Pi,\e})\notag\\
&&\qquad\quad -   \pi^{m_{t_{k}}}_{t_k}(c_{i,j}(\cdot, Y_{t_k},t_k))\biggr\}.
\end{eqnarray}
In particular, we obtain a new approximation $\hat v_i^{\check \Pi,\e}$ of the true value function, 
defined through the recursive scheme in \eqref{eq:scheme2}. The error $|\hat v_i^{\check\Pi,\e}(m,y,t_k)-v_i^{\check\Pi,\e}(m,y,t_k)|$ is analyzed in Proposition \ref{prop:regression}.

\begin{remark}
In the above construction, the set of basis functions used is arbitrary and several options are possible. Furthermore, it is possible to choose a different set of basis functions for each time $t_k$ without adding difficulties beyond additional notation. Also, as pointed out in \cite{ACLP12}, one can use stochastic partitions of the domain $D$ to enable the use of adaptive partitioning methods, possibly increasing the convergence speed of the numerical scheme.
\end{remark}

 \subsection{Step 5 -- Replacing the true regression by a sample mean}\label{sec.sample_mean_approx}
 Given a test function $\varphi$,  in numerical calculations the true regression parameters $$\hat \lambda_{t_k}(\varphi) =(\hat\lambda_{t_k,1}(\varphi) ,....,\hat\lambda_{t_k,R}(\varphi) )$$ will not be known, and they have to be replaced by a sample mean $$\tilde\lambda_{t_k}(\varphi) =(\tilde\lambda_{t_k,1}(\varphi) ,....,\tilde\lambda_{t_k,R}(\varphi) )$$ based on simulations. Recall
 that in the context of \eqref{prob} we in the end want to approximate $v_i(m,y,0)=v_i(m,\theta,y,0)$ for $(m,\theta,y)\in\mathbb R^{N_1}\times \mathbb R^{N_1\times N_1} \times \mathbb R^{N_2}$ given and fixed. In particular, this means that in the original model for the process $(m_t,Y_t)$ we consider the initial condition $(m,y)$, at $t=0$. This also implies that the approximations of $(m_t,Y_t)$ introduced above, $(\bar m_t,\bar Y_t)$ and $(\bar m_t^\e,\bar Y_t^\e)$, will also start at $(m,y)$ at $t=0$. To outline the estimation of the true regression parameters we let $\l (\{Y_{t}^\ell\}_{t \in \Pi}\r)_\ell$, ${1\leq \ell \leq M}$, denote $M$ simulated trajectories of the observed process $Y_t$, starting at $y$ at $t=0$, and we use these to calculate the corresponding values of $\l (\{m_t^\ell\}_{t\in \Pi} \r)_\ell$, $1\leq \ell \leq M$ as outlined in Section \ref{sec:KB} and Subsection \ref{subsec:eulerdisc}, with initial condition
 $m_0^\ell=m$ for all $\ell$. Based on the paths $\l (\{Y_{t}^\ell\}_{t \in \Pi}\r)_\ell$ and $\l (\{m_t^\ell\}_{t\in \Pi} \r)_\ell$, $1\leq \ell \leq M$, we now compute the empirical vector $\tilde\lambda_{t_k}(\varphi) =(\tilde\lambda_{t_k,1}(\varphi) ,....,\tilde\lambda_{t_k,R}(\varphi) )$, estimating $\hat \lambda_{t_k}(\varphi) =(\hat\lambda_{t_k,1}(\varphi) ,....,\hat\lambda_{t_k,R}(\varphi) ),$ as
\begin{equation} \label{rego}
\tilde \lambda_{t_k}(\varphi) := \argmin _{ (\lambda_1,...,\lambda_R) } \frac{1}{M}  \sum_{\ell=1}^M \left (  \sum_{r=1}^R \l (\lambda_{r} B_r(m^\ell_{t_k},Y_{t_k}^\ell,t_k) - \varphi(m^\ell_{t_{k+1}},Y_{t_{k+1}}^\ell,t_{k+1} )  \r )^2\right).
\end{equation}
Given $\varphi$, we in this way fix $\tilde\lambda_{t_k}(\varphi)$ for $k\in\{0,...,N-1\}$. In the following we indicate that the estimation of
 $\tilde\lambda_{t_k}(\varphi)$ is based on $M$ sample paths by writing  $\tilde\lambda_{t_k}^M(\varphi)$ instead of $\tilde\lambda_{t_k}(\varphi)$. Next, using $\tilde \lambda_{t_k}^M(\varphi)$ we set
\begin{equation}\label{eq:Ftilde}
\tilde F_M^{m,y,t_k}(\varphi) :=  \sum_{r=1}^R\tilde  \lambda^M_{t_k ,r}(\varphi) B_r(m,y,t_k)
\end{equation}
whenever $(m,y)\in\mathbb R^{N_1}\times \mathbb R^{N_2}$. In particular, while the coefficients $\{\lambda^M_{t_k ,r}(\varphi)\}$ are estimated based on a finite set of sample paths  $\l (\{Y_{t}^\ell\}_{t \in \Pi}\r)_\ell$, $\l (\{m_t^\ell\}_{t\in \Pi} \r)_\ell$, $1\leq \ell \leq M$, we use these coefficients in
\eqref{eq:Ftilde} to construct an estimator for the conditional expectation
$$E^{m,y, t_k}\biggl [\varphi(m_{t_{k+1}},Y_{t_{k+1}},t_{k+1})\biggr ]$$
for all $(m,y)\in\mathbb R^{N_1}\times \mathbb R^{N_2}$. Based on $F_M$ we let  $\tilde v_{i}^{\check \Pi,\e,M}(m_{t_k},Y_{t_k},t_k)$ be defined through the recursive scheme
\begin{align} \label{eq:scheme3}
\tilde v_{i}^{\check \Pi,\e,M}(m_T,Y_T,T)&=0 \notag \\
\tilde v_{i}^{\check \Pi,\e,M}(m_{t_k},Y_{t_k},t_k) &= \max _{j \in \Q} \bigg \{\delta \pi^{m_{t_{k}}}_{t_k}(f_{j}(\cdot, Y_{t_k},t_k))+ \tilde F^{m_{t_k},Y_{t_k},t_k}_M(\tilde v_{j}^{\check\Pi,\e,M}) \notag\\
&\qquad\qquad -  \pi^{m_{t_{k}}}_{t_k}(c_{i,j}(\cdot, Y_{t_k},t_k))\bigg\}.
\end{align}
Then $\tilde v_i^{\check\Pi,\e,M}(m,y,t_k)$ is an approximation of $\hat v_i^{\check\Pi,\e}(m,y,t_k)$ and, since $(m_{0},Y_{0},0)=(m,y,0)$, $\tilde v_i^{\check\Pi,\e,M}(m,y,0)$ is an approximation of
$\hat v_i^{\check\Pi,\e}(m,y,0)$. Hence, the final value produced by the algorithm, $ \tilde v_i^{\check\Pi,\e,M}(m,y,0)$, is an approximation of the true value $v_i(m,y,0)$.

Following  \cite{ACLP12}, we will in this paper use indicator functions on hypercubes as basis functions, $\{B_r=B_r(m,y,t)\}_{1\leq r \leq R}$, for our regression and subsequent error analysis. These hypercubes are defined in relation to  the space (time) localization domain  $D^\e=\{D^\e_{t}\}_{t\in[0,T]}$. Here we briefly outline the idea of using such a basis for regression but we also refer to \cite{ACLP12}. Recall that $D^\e$ is assumed to have the structure specified in \eqref{dom}. We let $\{ B_r\}_{1 \leq r \leq R}$ be a partition of the bounded domain $D^\e$ into $R$ hypercubes, i.e., we split $D^\e$ in to $R$ open hypercubes $B_r$ such that $\cup_{r=1}^R B_r  = D^\e$ and $B_l \cap B_j = \emptyset$ if $l \neq j$. Furthermore, to achieve notational simplicity, we in the following also assume that each hypercube has side length $\Delta$ in each dimension.  Using $\{ B_r\}_{1 \leq r \leq R}$ we define
basis functions to be used in the regression as
$$B_r(m_{t_k},Y_{t_k},t_k) = \I_{\{(m_{t_k},Y_{t_k}) \in B_r\}}, $$
for $1 \leq r\leq R$, and $(m_{t_k}, Y_{t_k}) \in D^\ep$. By definition $\I_{\{(m,Y) \in B_r\}}=1$ if $(m,Y) \in B_r$ and $\I_{\{(m,Y) \in B_r\}}=0$ otherwise. Note that we use the same symbol $B_r$ to denote both the $r$-th hypercube and its corresponding basis function/indicator function. Since conditional expectation is mean-square error-minimizing, this choice of basis functions reduces the vectors \eqref{rego-} and \eqref{rego} to
\begin{align}\label{eq:truemean}
\hat \lambda_{{t_k},r}(\hat v_i^{\check\Pi,\e}) :&= \E \l [\hat v_i^{\check\Pi,\e}(m_{t_{k+1}},Y_{t_{k+1}},t_{k+1}) \, \vline \, (m_{t_k},Y_{t_k})\in B_r\r]\notag\\
&=   \frac{\E \l[\hat v_i^{\check\Pi,\e} (m_{t_{k+1}},Y_{t_{k+1}},t_{k+1}) \I_{\{(m_{t_k},Y_{t_k}) \in B_r \}}\r]}{\P((m_{t_k},Y_{t_k}) \in B_r)},
\end{align}
and
\begin{align}
\tilde \lambda^M_{{t_k},r}(\tilde v_i^{\check\Pi,\e,M}) :&= \frac{\frac{1}{M} \sum_{ \ell=1}^M\l[\tilde v_i^{\check\Pi,\e,M} (m^\ell_{t_{k+1}},Y^\ell_{t_{k+1}},t_{k+1}) \I_{\{(m^\ell_{t_k},Y^\ell_{t_k}) \in B_r \}}\r]}{\frac{1}{M} \sum_{\ell=1}^M \I_{\{(m^\ell_{t_k},Y^\ell_{t_k}) \in B_r \}}} \notag
\end{align}
with the convention that $\tilde \lambda^M_{{t_k},r}(\tilde v_i^{\check\Pi,\e,M})=0$ if, at time $t_k$, no path $(m^\ell,Y^\ell)$ lies inside the hypercube $B_r$. Hence, in our numerical scheme, if  $(m_{t_k},Y_{t_k}) \in B_r$ the expected future value $$\hat \lambda_{{t_k},r}(\hat v_i^{\check\Pi,\e})=\E \l [\hat v_i^{\check\Pi,\e} (m_{t_{k+1}},Y_{t_{k+1}},t_{k+1}) \, \vline \, (m_{t_k},Y_{t_k})\in B_r\r]\ (i \in \Q)$$
 will be approximated by $\tilde \lambda^M_{{t_k},r}(\tilde v_i^{\check\Pi,\e,M})$. The error $|\hat v_i^{\check\Pi,\e}(m,y,t_k)-\tilde v_i^{\check\Pi,\e,M}(m,y,t_k)|$
is estimated in Proposition \ref{lemma:truetosample}.

\setcounter{equation}{0} \setcounter{theorem}{0}
\section{Convergence analysis} \label{sec:convanalysis}
\noindent
In this section we establish the convergence of the numerical scheme outlined in the previous section by proving Theorem \ref{thm:convergence} stated below. However, we first  recall
degrees of freedom, at our disposal, in the numerical scheme proposed.
\begin{itemize}
\item $\delta=T/N$ -- the time discretization parameter,
\item $\e$ -- the error tolerance when choosing the bounded convex domains
$D_t^\e$ for the projection,
\item $\Delta$ -- the edge size of  the hypercubes used in the regression,
\item $M$ -- the number of simulated trajectories of $\{Y_{t}^\ell\}_{t \in \Pi^\delta}$ and
$\{m_t^l\}_{t\in \Pi^\delta}$,  used in \eqref{rego}.
\end{itemize}
Recall that $(\Omega,\mathcal F,\mathbb P)$ is the underlying probability space and in the following $L^2=L^2(\Omega,d\mathbb P)$ with norm $||\cdot||_{L^2}$. We prove the following  convergence theorem.
\begin{theorem} \label{thm:convergence} Assume that $(m_0, \theta_0, Y_0)=(m,\theta,y)\in \mathbb R^{N_1}\times \mathbb R^{N_1\times N_1} \times \mathbb R^{N_2}$ and that all assumptions and conditions stated and used in the previous sections are fulfilled. Then there exist a constant $C_1$, independent of
$\delta$, $\epsilon$, $\Delta$ and $M$, and a constant $C_2$, independent of
$\delta$,  $\Delta$ and $M$,  such that
\begin{align}
&\l \| \max_{i \in \Q} \Big|v_i(m,\theta,y,0)-\tilde v_i^{\check\Pi,\e,M}(m,\theta,y,0)\Big| \r \|_{L^2} \notag \\
\leq & C_1 \biggl \{ \l( \delta \log(\frac {2T}{\delta})\r )^{1/2} +\delta^{1/2}   + \delta  +
 \e + \frac{\Delta}{\delta}\notag\\
 & + \frac{1+C_2}{\delta \sqrt{Mp_{min}(T,\Delta,\e)}}
 +\frac{1+C_2}{\delta{Mp_{min}(T,\Delta,\e)}}\biggr\},\notag
\end{align}
where
\begin{equation*}
p_{min}(T,\Delta,\e):=\min_{t\in\Pi^\delta\cap[0,T]}\min_{B_r\subset D^\e}\mathbb P[(m_{t},Y_t)\in B_r ]
\end{equation*}
is a strictly positive quantity. In particular, if $\e \to0$, $\delta \to 0$, $\Delta \to 0$ and $M \to \infty$
such that $$ \frac{\Delta}{\delta} \to 0,\ \frac{1+C_2}{\delta \sqrt{Mp_{min}(T,\Delta,\e)}}
\to 0,\mbox{ and }\frac{1+C_2}{\delta{Mp_{min}(T,\Delta,\e)}}
\to 0,$$ then $$v_i^{\check\Pi,\e,M}(m,\theta,y,0) \xrightarrow{L^2}  v_i(m,\theta,y,0)\mbox{ uniformly in $i\in\Q$}.$$
\end{theorem}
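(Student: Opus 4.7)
The plan is to prove Theorem \ref{thm:convergence} by a straightforward triangle-inequality decomposition of the total error into the five individual errors introduced by Step~1 through Step~5 of the numerical scheme, and then to invoke the respective propositions (\ref{lemma:bermudan}, \ref{lemma:discretization}, \ref{lemma:projected}, \ref{prop:regression}, \ref{lemma:truetosample}) whose bounds are designed to combine exactly into the claimed estimate. Concretely, for every $i\in\Q$ I would write
\begin{align*}
|v_i - \tilde v_i^{\check\Pi,\e,M}| &\leq |v_i - v_i^{\Pi}| + |v_i^{\Pi} - v_i^{\check\Pi}| + |v_i^{\check\Pi} - v_i^{\check\Pi,\e}| \\
&\quad + |v_i^{\check\Pi,\e} - \hat v_i^{\check\Pi,\e}| + |\hat v_i^{\check\Pi,\e} - \tilde v_i^{\check\Pi,\e,M}|,
\end{align*}
all evaluated at $(m,\theta,y,0)$, take $\max_{i\in\Q}$ on both sides, and then $L^2(\Omega)$-norms, using that the right-hand side is a finite sum of terms handled one by one.

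Next I would match each piece to the corresponding intermediate result: the Bermudan error $\|\max_i|v_i-v_i^\Pi|\|_{L^2}$ is controlled by Proposition \ref{lemma:bermudan}, yielding the $(\delta\log(2T/\delta))^{1/2}$ contribution familiar from the literature on discretely reflected BSDEs; the Euler/time-discretization error $\|\max_i|v_i^\Pi-v_i^{\check\Pi}|\|_{L^2}$ by Proposition \ref{lemma:discretization}, giving the $\delta^{1/2}+\delta$ contribution (the $\delta^{1/2}$ from the strong order of the Euler approximations $\bar m,\bar Y$ and the $\delta$ from the Riemann-sum replacement of the running integral); the projection error $\|\max_i|v_i^{\check\Pi}-v_i^{\check\Pi,\e}|\|_{L^2}$ by Proposition \ref{lemma:projected}, giving the $\e$ contribution via the defining property \eqref{eq:projection} and the Lipschitz regularity in $(m,y)$ from Lemmas \ref{lemma:Lipschitzm} and \ref{lemma:Lipschitzy}; the regression error by Proposition \ref{prop:regression}, producing $\Delta/\delta$ (the $1/\delta$ factor arising from the backward recursion in \eqref{eq:scheme2} combined with the $\Delta$-size of each hypercube $B_r$); and finally the sampling error $\|\max_i|\hat v_i^{\check\Pi,\e}-\tilde v_i^{\check\Pi,\e,M}|\|_{L^2}$ by Proposition \ref{lemma:truetosample}, giving the two $p_{min}$-dependent terms as a consequence of controlling both the numerator and the denominator in the empirical least-squares coefficients $\tilde\lambda^M_{t_k,r}$.

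Adding the five bounds yields the stated estimate. To extract the strict positivity of $p_{min}(T,\Delta,\e)$ it suffices to remark that the discretized Kalman-Bucy filter produces a jointly Gaussian law for $(\bar m_{t}^\e,\bar Y_t^\e)$ restricted to the compact domain $D^\e$, so that each hypercube $B_r\subset D^\e$ is charged with strictly positive probability uniformly over the finitely many grid points $t\in\Pi^\delta\cap[0,T]$. For the final convergence statement I would simply observe that, under the assumed asymptotics $\e\to 0$, $\delta\to 0$, $\Delta/\delta\to 0$ and $(1+C_2)/(\delta\sqrt{M p_{min}})\to 0$, every term on the right-hand side tends to $0$, and since the bound is uniform in $i\in\Q$, $L^2$-convergence holds uniformly in the mode.

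The main obstacle in carrying out this plan is not in the final assembly but in making sure the individual propositions are actually available with the claimed shape; in particular, the projection step (Proposition \ref{lemma:projected}) relies in an essential way on the Lipschitz regularity of $v_i^{\check\Pi}$ in both $m$ and $y$, which is precisely where assumption \eqref{ass:switchingcosts+} enters the proof (the authors warn about this), and the regression step must be carried out on hypercubes partitioning $D^\e$ in such a way that the error telescopes backward in $k$ without picking up an exponential factor in $N=T/\delta$ -- this is where the $\Delta/\delta$ rather than $\Delta$ arises. Everything else is a routine, though bookkeeping-heavy, application of the triangle inequality, the tower property of conditional expectation, and the elementary inequality $|\max_j a_j - \max_j b_j|\leq \max_j|a_j-b_j|$ used repeatedly to propagate the errors through the DPP recursion \eqref{eq:scheme}--\eqref{eq:scheme3}.
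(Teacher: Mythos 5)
Your proposal is correct and follows exactly the paper's own argument: the authors decompose $v_i-\tilde v_i^{\check\Pi,\e,M}$ into the same five differences $E_1,\dots,E_5$ at the start of Section \ref{sec:convanalysis} and then simply sum the bounds of Propositions \ref{lemma:bermudan}--\ref{lemma:truetosample} at $t_k=0$. One small misattribution in your commentary: the Lipschitz regularity of Lemmas \ref{lemma:Lipschitzm} and \ref{lemma:Lipschitzy} (and hence assumption \eqref{ass:switchingcosts+}) enters the regression step, Proposition \ref{prop:regression}, not the projection step, whose proof only needs the Lipschitz continuity of $f_i$ and $c_{i,j}$ together with \eqref{eq:projection}.
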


We first note, using the notation introduced
in the previous section, that
\begin{eqnarray*}
v_i(m,y,t_k)-\tilde v_i^{\check\Pi,\epsilon,M}(m,y,t_k)&=&E_1(m,y,t_k)+E_2(m,y,t_k)+E_3(m,y,t_k)\notag\\
&&+E_4(m,y,t_k)+E_5(m,y,t_k)
\end{eqnarray*}
where
\begin{eqnarray*}
E_1(m,y,t_k)&:=&v_i(m,y,t_k)-v_i^\Pi(m,y,t_k),\notag\\
E_2(m,y,t_k)&:=&v_i^\Pi(m,y,t_k)-v_i^{\check\Pi}(m,y,t_k),\notag\\
E_3(m,y,t_k)&:=&v_i^{\check\Pi}(m,y,t_k) - v_i^{\check\Pi,\e}(m,y,t_k), \notag \\
E_4(m,y,t_k)&:=&v_i^{\check\Pi,\e}(m,y,t_k)-\hat v_i^{\check\Pi,\e}(m,y,t_k),\notag\\
E_5(m,y,t_k)&:=&\hat v_i^{\check\Pi,\e}(m,y,t_k)-\tilde v_i^{\check\Pi,\e}(m,y,t_k).
\end{eqnarray*}
In the subsequent subsections we prove that the errors $E_1-E_5$ can be controlled and we ending the section with a summary proving Theorem \ref{thm:convergence}.
\subsection{Preliminary lemmas}
Before deducing the relevant error estimates, i.e., quantifying  $E_1$ to $E_5$, we here state and prove some auxiliary results, Lemma \ref{prop:finitenumberofswitches} -- Lemma  \ref{lemma:Lipschitzy}, which will be used in the subsequent proofs.

\begin{lemma}\label{prop:finitenumberofswitches} Assume \eqref{assump1elel}, \eqref{ass:functions2}, \eqref{ass:switchingcosts}, and let
$\mu^\ast=(\{\tau_k^\ast\}_{k\geq 0},\{\xi_k^\ast\}_{k\geq 0})$ be such that
\begin{eqnarray*}
\tilde J(\mu^\ast)=\sup_{\mu}\tilde J(\mu),\mbox{ where $\tilde J(\mu)$ is defined in \eqref{eq1lu}}.
\end{eqnarray*}
Let  $N(\mu^\ast)= |\{ n : \tau_n^\ast \leq T\}|$. Then
\begin{equation*}
N(\mu^\ast) \leq \frac {2 T\sup_i \|f_i\|_\infty}{\nu},
\end{equation*}
where $\nu>0$ is the positive constant appearing in \eqref{ass:switchingcosts} $(ii)$. In particular, the number of switches in an optimal strategy $\mu^\ast$ is finite.
\end{lemma}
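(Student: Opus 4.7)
My plan is a simple contradiction argument based on comparing the candidate optimal strategy to the trivial ``never switch'' strategy. The key observation is that switching costs are bounded below by a fixed positive constant $\nu$, while running payoffs are uniformly bounded in absolute value (because $\{f_i\}\subset C^{2,1}_b$, so $\sup_i\|f_i\|_\infty<\infty$).

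First I would upper bound $\tilde J(\mu^\ast)$. Pointwise $|f_i|\leq\|f_i\|_\infty$, so conditional expectations inherit the same bound, and integrating over $[0,T]$ gives a running profit contribution at most $T\sup_i\|f_i\|_\infty$. By \eqref{ass:switchingcosts}(ii), the conditional expectation of each nontrivial switching cost is at least $\nu$, and without loss of generality (since a ``switch'' from $i$ to $i$ costs nothing and can be eliminated from a strategy without changing $\tilde J$) I may assume all $N(\mu^\ast)$ switches are genuine, i.e., $\xi_k^\ast\neq\xi_{k-1}^\ast$. Hence the total switching cost contribution to $\tilde J(\mu^\ast)$ is at most $-N(\mu^\ast)\,\nu$, giving
\[
\tilde J(\mu^\ast)\ \leq\ T\sup_i\|f_i\|_\infty - N(\mu^\ast)\,\nu.
\]

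Next I would produce a lower bound on the value by exhibiting a specific admissible strategy. Take $\mu_0$ to be the strategy that performs no switches, i.e., stays forever in the initial mode $\xi_0=i$ (with, e.g., $\tau_1=T$ and $\xi_1=i$ so that no cost is incurred). This strategy is $\mathcal F^Y$-adapted (it is deterministic), so $\mu_0\in\mathcal A^Y_{0,i}$, and
\[
\tilde J(\mu_0)\ =\ \mathbb E\!\left[\int_0^T\!\mathbb E[f_i(X_s,Y_s,s)\mid\mathcal F^Y_s]\,ds\right]\ \geq\ -T\sup_i\|f_i\|_\infty.
\]

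Finally, since $\mu^\ast$ is optimal, $\tilde J(\mu^\ast)\geq \tilde J(\mu_0)$, so combining the two bounds yields
\[
T\sup_i\|f_i\|_\infty - N(\mu^\ast)\,\nu\ \geq\ -T\sup_i\|f_i\|_\infty,
\]
which rearranges to $N(\mu^\ast)\leq \tfrac{2T\sup_i\|f_i\|_\infty}{\nu}$, as desired. I do not foresee any serious obstacle here; the only mildly delicate point is the preliminary reduction that removes ``fake'' switches (those with $\xi_k^\ast=\xi_{k-1}^\ast$), which is justified because such switches neither alter the index process $\mu_s$ nor incur cost (by \eqref{ass:switchingcosts}(i)), so they can be removed without changing $\tilde J(\mu^\ast)$.
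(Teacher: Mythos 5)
Your proposal is correct and follows essentially the same route as the paper: compare the optimal strategy to the trivial no-switch strategy, bound the running-payoff contribution by $T\sup_i\|f_i\|_\infty$ in absolute value, and charge at least $\nu$ per switch, yielding $N(\mu^\ast)\nu\leq 2T\sup_i\|f_i\|_\infty$. Your explicit remark about discarding switches with $\xi_k^\ast=\xi_{k-1}^\ast$ (which cost nothing by \eqref{ass:switchingcosts}~$(i)$ and do not affect the index process) is a small point the paper leaves implicit, but it does not change the argument.
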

\begin{proof} Let $\mu_0\in \A_{t,i}^Y$ denote the trivial strategy, i.e., no switches. Then, using \eqref{assump1elel}, \eqref{ass:functions2} and \eqref{ass:switchingcosts}, we see that
\begin{eqnarray*}
-T\sup_i \|f_i\|_\infty\leq \tilde J(\mu_0)\leq T\sup_i \|f_i\|_\infty.
\end{eqnarray*}
Now, let $\mu_{\infty} \in \A_{t,i}^Y$ be a strategy with an unbounded number of switches. Then, by \eqref{ass:functions2} $(i)$ and \eqref{ass:switchingcosts} $(ii)$, it follows that $\tilde J(\mu_\infty) = - \infty$. Hence, the optimal strategy $\mu^\ast$ must consist of a finite number of switches. Finally, let $\mu\in \A_{t,i}^Y$ be an arbitrary strategy with a finite number of switches and assume that $\tilde J(\mu)\geq \tilde J(\mu_0)$. Then, using \eqref{ass:functions2} $(i)$ and \eqref{ass:switchingcosts} $(ii)$ we see that
$$0\leq \tilde J(\mu) - \tilde J(\mu_0) \leq 2 T\sup_i \|f_i\|_\infty-\nu N(\mu).$$
In particular, $N(\mu^\ast) \leq  {2 T \sup_i \|f_i\|_\infty}/{\nu}$.
\end{proof}

\begin{lemma}\label{prop:BMLPnorm}
Let $\{W_t\}$ be a $d$-dimensional Brownian motion defined on a filtered probability space $(\Omega, \F, \{\F_t\}_{t\geq 0}, \P)$. Then, for any $p \geq 1$ there exists a finite constant $C_p>0$ such that
\begin{equation*}
\E\l[ \l (  \sup \limits_{\substack{0\leq t,s \leq T \\ |t-s|\leq \delta }} |W_t - W_s|  \r)^p \r ]  \leq C_p \l ( \delta \log(\frac {2T}{\delta})\r )^{p/2}.
\end{equation*}
\end{lemma}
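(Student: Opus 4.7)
The plan is to establish this as a quantitative version of Lévy's modulus-of-continuity theorem via a partition argument combined with Gaussian tail bounds, treating the $d$-dimensional case componentwise.

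First I would reduce to controlling the oscillation on a finite partition. Let $N=\lceil T/\delta\rceil$ and set $I_k=[k\delta,(k+1)\delta]\cap[0,T]$ for $k=0,\dots,N-1$, and define
\[
M_k := \sup_{s\in I_k} |W_s - W_{k\delta}|.
\]
If $|t-s|\leq \delta$ with $s\leq t$, then $s$ and $t$ lie in at most two consecutive intervals $I_k, I_{k+1}$, so by the triangle inequality $|W_t-W_s|\leq 2M_k+2M_{k+1}\leq 4\max_{0\leq k\leq N-1}M_k$. It therefore suffices to show
\[
\E\Bigl[\bigl(\max_{0\leq k\leq N-1} M_k\bigr)^p\Bigr]\leq C_p\bigl(\delta\log(2T/\delta)\bigr)^{p/2}.
\]

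Second, I would derive a sub-Gaussian tail bound for a single $M_k$. By Brownian scaling $M_k \stackrel{d}{=}\sqrt{\delta}\,\sup_{s\in[0,1]}|W_s|$, and for each scalar component the reflection principle yields $\P(\sup_{s\in[0,1]}W^{(i)}_s>\lambda)\leq e^{-\lambda^2/2}$, hence by a union bound over the $d$ components and the two signs,
\[
\P\bigl(M_k>\lambda\bigr)\leq 4d\,\exp\!\Bigl(-\tfrac{\lambda^2}{2d\delta}\Bigr),\qquad \lambda>0.
\]
Because the increments $\{W_t-W_{k\delta}:t\in I_k\}$ for different $k$ are independent, the $M_k$ are independent, so a union bound over $k=0,\dots,N-1$ gives
\[
\P\Bigl(\max_k M_k>\lambda\Bigr)\leq 4dN\,\exp\!\Bigl(-\tfrac{\lambda^2}{2d\delta}\Bigr).
\]

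Third, I would convert this tail bound into a moment bound. Choose the threshold $\lambda_0=\sqrt{2d\delta\log(4dN)}$, write
\[
\E\Bigl[\bigl(\max_k M_k\bigr)^p\Bigr]=p\int_0^\infty \lambda^{p-1}\P\bigl(\max_k M_k>\lambda\bigr)\,d\lambda,
\]
split the integral at $\lambda_0$, bound the integrand by $1$ on $[0,\lambda_0]$, and on $[\lambda_0,\infty)$ use the Gaussian tail together with the substitution $\lambda=\lambda_0+u$ and the standard estimate $\int_0^\infty(\lambda_0+u)^{p-1}e^{-u\lambda_0/(d\delta)}du\leq C_p\lambda_0^{p-1}\cdot d\delta/\lambda_0$. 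This yields
\[
\E\Bigl[\bigl(\max_k M_k\bigr)^p\Bigr]\leq C_p\lambda_0^{\,p}\leq C_p'\bigl(\delta\log(2T/\delta)\bigr)^{p/2},
\]
since $N\leq 2T/\delta$, so $\log(4dN)\leq C\log(2T/\delta)$ for $\delta\leq T$ (the case $\delta\geq T$ being trivial because $\sup|W_t-W_s|$ is then bounded in $L^p$ by a constant times $\sqrt{T}$, absorbable in $C_p$).

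The main obstacle is essentially bookkeeping: juggling the three universal constants (from the reflection principle, from the union bound over $k$, and from the $L^p$-to-tail conversion) so that the final estimate has the precise form $\delta\log(2T/\delta)$ rather than, say, $\delta\log(T/\delta)+\delta$, and handling separately the borderline regimes $\delta\approx T$ and $\delta$ very small. Everything else is a clean application of scaling, the reflection principle, independence of disjoint Brownian increments, and the standard tail-to-moment inequality.
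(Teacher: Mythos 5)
Your argument is correct. Note, however, that the paper does not actually prove this lemma: its entire proof is a citation of Theorem~1 in \cite{FN10}, which establishes the moment bound for the modulus of continuity of general It\^o processes with bounded coefficients, of which the Brownian statement here is a special case. Your self-contained proof --- partitioning $[0,T]$ into $O(T/\delta)$ blocks of length $\delta$, reducing the two-parameter supremum to $\max_k M_k$ by the triangle inequality, getting a sub-Gaussian tail for each block via scaling and the reflection principle, taking a union bound over the $N\leq 2T/\delta$ blocks, and integrating the tail beyond the threshold $\lambda_0\asymp\sqrt{\delta\log(2T/\delta)}$ --- is the standard direct route for Brownian motion and delivers exactly the stated rate; what the citation buys instead is generality (drift and diffusion terms), which the paper in fact exploits elsewhere only through the trivial extension to $Y_t=Y_0+\int_0^t h(X_r)\,dr+U_t$. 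Two minor remarks on your write-up: (i) independence of the $M_k$ is never needed, since the union bound alone suffices; (ii) the dismissal of the regime $\delta\geq T$ is slightly too quick, because $\log(2T/\delta)\to 0$ as $\delta\uparrow 2T$ and the right-hand side degenerates there, so the inequality should be read for $\delta\leq T$ --- which is the only regime relevant to the paper, where $\delta=T/N$ with $N$ a large integer. Neither point affects the correctness of your argument in the relevant range.
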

\begin{proof} This is Theorem 1 in \cite{FN10}.\end{proof}

\begin{lemma}\label{lemma:Lipschitzm} Assume \eqref{assump1elel}, \eqref{ass:functions2}, \eqref{ass:switchingcosts} and \eqref{ass:switchingcosts+}. Then, there exists a constant $C_m$, independent of $\delta$,  such that
\begin{eqnarray*}
|v_i^{\check \Pi,\e}(m_1,Y_{t_k},t_k) - v_i^{\check \Pi,\e}(m_2,Y_{t_k},t_k) | \leq  C_{m} |m_1 - m_2|.
\end{eqnarray*}
whenever $(i,m_1,Y_{t_k},t_k), (i,m_2,Y_{t_k},t_k) \in \Q \times \R^{N_1} \times \R^{N_2} \times \Pi$.\end{lemma}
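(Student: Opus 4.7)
The plan is to proceed by backward induction on $k \in \{0,1,\ldots,N\}$, using the dynamic programming principle \eqref{eq:DPPPi} satisfied by $v_i^{\check \Pi,\e}$. At the terminal time $t_N = T$, $v_i^{\check\Pi,\e}(\cdot,\cdot,T) \equiv 0$ and the estimate holds with constant zero. For the induction step I apply the elementary inequality $|\max_j a_j - \max_j b_j| \leq \max_j |a_j - b_j|$ and reduce to estimating, for every $j \in \Q$, the differences of the three summands appearing in \eqref{eq:DPPPi} when $(m_{t_k},Y_{t_k})$ equals $(m_1,y)$ versus $(m_2,y)$.

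The running-payoff term is handled directly from Corollary \ref{cordir} and \eqref{ass:functions2}$(i)$: shifting the Gaussian integration variable one obtains
\begin{equation*}
\delta \l|\pi^{m_1}_{t_k}(f_j(\cdot,y,t_k)) - \pi^{m_2}_{t_k}(f_j(\cdot,y,t_k))\r| \leq \delta c_1 |m_1 - m_2|.
\end{equation*}
The switching-cost term is exactly where the additional structural assumption \eqref{ass:switchingcosts+} enters: since $c_{i,j}(\cdot,y,t_k) = c_{i,j}(t_k)$ does not depend on the state, the two Gaussian averages coincide and this term contributes $0$ to the Lipschitz comparison. Without \eqref{ass:switchingcosts+} one would need to compare $\pi^{m_1}_{t_k}(c_{i,j})$ with $\pi^{m_2}_{t_k}(c_{i,j})$ inside the $\max$, which combined with the induction step would generate a non-trivial contribution that would propagate backwards in $k$ in a way the present argument cannot control (this is the ``term of type $A_3$'' referred to by the authors).

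For the conditional-expectation term I would use a coupling argument on the Euler-discretized dynamics introduced in Subsection~5.2. Under the two initial conditions $(m_i,y)$, $i=1,2$, represent the signal as $X^{(i)}_{t_k} = m_i + \theta_{t_k}^{1/2} Z$ with the same $Z \sim \N(0,I)$, and use the same Brownian increments of $W$ and $U$ over $[t_k,t_{k+1}]$. The Euler formulas for $\bar Y$ and $\bar m$, together with the determinism of $\theta$, then yield the pathwise bounds
\begin{align*}
|\bar Y^{(2)}_{t_{k+1}} - \bar Y^{(1)}_{t_{k+1}}| &\leq \|G\|_\infty\, \delta\, |m_1 - m_2|,\\
|\bar m^{(2)}_{t_{k+1}} - \bar m^{(1)}_{t_{k+1}}| &\leq (1 + C_0 \delta)\, |m_1 - m_2|,
\end{align*}
with $C_0$ depending only on $\|F\|_\infty$, $\|G\|_\infty$ and $\sup_{t}|\theta_t|$; the projection onto the convex set $D^\e$ is $1$-Lipschitz and preserves these estimates.

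Because the expectation term requires Lipschitz regularity of $v_j^{\check\Pi,\e}$ in both $m$ and $y$ at time $t_{k+1}$, I expect to run the induction jointly with Lemma \ref{lemma:Lipschitzy}, i.e.\ to prove
\begin{equation*}
|v_i^{\check \Pi,\e}(m_1,y_1,t_k)-v_i^{\check \Pi,\e}(m_2,y_2,t_k)| \leq L_k \l(|m_1-m_2|+|y_1-y_2|\r)
\end{equation*}
for all $i\in\Q$. Plugging the three estimates above into the DPP comparison yields the discrete recursion $L_k \leq (1+C_0\delta)L_{k+1} + c_1 \delta$, which together with $L_N=0$ gives $L_k \leq c_1 C_0^{-1}(e^{C_0 T}-1)$, a constant $C_m$ independent of $\delta = T/N$. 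The main obstacle is this joint induction: one must verify that the coupling really produces the $(1+C_0\delta)$ factor with a $\delta$-independent $C_0$, and that the feedback through the $y$-variable (which is needed to apply the induction hypothesis) does not destroy the recursion — this is where the independence of $c_{i,j}$ on $(x,y)$ proves indispensable.
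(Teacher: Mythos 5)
Your proposal is correct and follows essentially the same route as the paper's proof: backward induction through the discrete DPP, the same three-term decomposition with the payoff term bounded by $\delta c_1|m_1-m_2|$ via the Gaussian shift, the switching-cost term annihilated by \eqref{ass:switchingcosts+}, the expectation term absorbed by the induction hypothesis together with a $(1+L\delta)$ stability estimate for the discretized dynamics, and a discrete Gronwall recursion closing the argument. The only substantive difference is that you run the induction jointly in $(m,y)$ with an explicit coupling, so as to account for the $O(\delta\,|m_1-m_2|)$ perturbation of the $\bar Y$-path caused by changing the initial mean; the paper instead proves Lemma \ref{lemma:Lipschitzy} separately and, in its $A_2$ estimate, takes the $Y$-path to be common to both expectations, invoking the stability estimate of Lemma 3.3 in \cite{ACLP12}. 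Your joint formulation is slightly more careful on this point but changes nothing in the final recursion, since the extra contribution is of order $\delta$ and is absorbed into the additive term.
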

\begin{proof} We claim that there exist positive constants $A$ and $L$, independent of $\delta$,  such that the following holds. There exists a sequence of constants $\{E_{m,k}\}_{k=0}^N$ such that
\begin{eqnarray}\label{ind1}
|v_i^{\check \Pi,\e}(m_1,Y_{t_k},t_k) - v_i^{\check \Pi,\e}(m_2,Y_{t_k},t_k) | \leq  E_{m,k} |m_1 - m_2|,
\end{eqnarray}
whenever $(i,m_1,Y_{t_k},t_k), (i,m_2,Y_{t_k},t_k) \in \Q \times \R^{N_1} \times \R^{N_2} \times \Pi$ and such that 
\begin{align}\label{Iprin+aa}
E_{m,N}&=0\notag\\
E_{m,k}&=A\delta+E_{m,k+1}(1+L\delta).
\end{align}
To prove this we proceed by (backward) induction on $k$ and we let $I(k)=1$ if \eqref{ind1} holds with a constant  $E_{m,k}$ whenever $(i,m_1,Y_{t_k},t_k), (i,m_2,Y_{t_k},t_k) \in \Q \times \R^{N_1} \times \R^{N_2} \times \Pi$, and if $E_{m,k}$ is related to the bounds $E_{m,k+1}, \dots, E_{m,N}$ as stated in
\eqref{Iprin+aa}. We want to prove that $I(k)=1$ whenever $k\in\{0,\dots,N\}$. Since
 $v_i^{ \check \Pi,\e}(m, Y_T,T) =0$ for all $m$, we immediately see that $I(N)=1$. Assuming that
 $I(k+1)=1$ for some $k\in[0,N-1]$ we next prove that $I(k)=1$ by constructing $E_{m,k}$. To do this we first note, simply by the (discrete) DPP, that
\begin{align}
 v_i^{\check \Pi,\e} (m_1,Y_{t_k},t_k)&=\max_{j \in \Q} \bigg\{ \delta \pi_{t_k}^{m_1} \l (f_j(\cdot,Y_{t_k},t_k) \r) + \E^{m_1,Y_{t_k},t_k} \l[v_j^{\check \Pi,\e}(m_{t_{k+1}},Y_{t_{k+1}},t_{k+1})  \r]\notag\\
  &\qquad\qquad- \pi_{t_k}^{m_1} \l ( c_{i,j}(\cdot,Y_{t_k},t_k) \r)  \notag \\
&+  (1-1)\biggl[ [ \delta \pi_{t_k}^{m_2} \l (f_j(\cdot,Y_{t_k},t_k) \r) + \E^{m_2,Y_{t_k},t_k} \l[ v_j^{\check \Pi,\e}(m_{t_{k+1}},Y_{t_{k+1}},t_{k+1})  \r]\notag\\
&\qquad\qquad - \pi_{t_k}^{m_2} \l ( c_{i,j}(\cdot,Y_{t_k},t_k) \r) \biggr]  \bigg \}. \notag
\end{align}
Furthermore, by elementary manipulations the above can be rewritten as
\begin{align}
 v_i^{\check \Pi,\e} (m_1,Y_{t_k},t_k) &=\max_{j \in \Q} \bigg \{ \delta \pi_{t_k}^{m_2} \l (f_j(\cdot,Y_{t_k},t_k) \r)  +  \E^{m_2,Y_{t_k},t_k} \l[v_j^{\check \Pi,\e}(m_{t_{k+1}},Y_{t_{k+1}},t_{k+1})  \r]\notag\\
 &\qquad\qquad - \pi_{t_k}^{m_2} \l ( c_{i,j}(\cdot, Y_{t_k},t_k) \r ) \notag \\
&\qquad\qquad+ \delta \l (\pi_{t_k}^{m_1} \l (f_j(\cdot, Y_{t_k},t_k) \r) - \pi_{t_k}^{m_2} \l (f_j(\cdot, Y_{t_k},t_k) \r) \r )\notag \\
&\qquad\qquad+ \E^{ m_1,Y_{t_k},t_k} \l[v_j^{\check \Pi,\e}(m_{t_{k+1}},Y_{t_{k+1}},t_{k+1})  \r]\notag\\
 &\qquad\qquad-\E^{m_2,Y_{t_k},t_k} \l[ v_j^{\check \Pi,\e}(m_{t_{k+1}},Y_{t_{k+1}},t_{k+1})  \r]  \notag \\
&\qquad\qquad- \l ( \pi_{t_k}^{m_1} \l ( c_{i,j}(\cdot,Y_{t_k},t_k) \r)  - \pi_{t_k}^{m_2} \l ( c_{i,j}(\cdot,Y_{t_k},t_k ) \r ) \r)  \bigg \}. \notag
\end{align}
In particular,
\begin{align}
 |v_i^{\check \Pi,\e} (m_1,Y_{t_k},t_k) -{v_i^{\check \Pi,\e} ( m_2, Y_{t_k},t_k)}|\leq A_1 + A_2 + A_3\notag,
\end{align}
where
\begin{align}
A_1 =& \max_{j \in \Q} \bigg \{\biggl |\delta \l (\pi_{t_k}^{m_1} \l (f_j(\cdot, Y_{t_k},t_k) \r) - \pi_{t_k}^{m_2} \l (f_j(\cdot, Y_{t_k},t_k) \r) \r )\biggr|\biggr\}, \notag \\
A_2 =& \max_{j \in \Q} \bigg \{\biggl |\E^{ m_1,Y_{t_k},t_k} \l[v_j^{\check \Pi,\e}(m_{t_{k+1}},Y_{t_{k+1}},t_{k+1})  \r]\notag\\
 &\qquad\qquad-\E^{m_2,Y_{t_k},t_k} \l[ v_j^{\check \Pi,\e}(m_{t_{k+1}},Y_{t_{k+1}},t_{k+1})  \r] \biggr|\biggr\}, \notag \\
A_3 =&  \max_{j \in \Q} \bigg \{\biggl |\pi_{t_k}^{m_2} \l ( c_{i,j}(\cdot,Y_{t_k},t_k ) \r ) - \pi_{t_k}^{m_1} \l ( c_{i,j}(\cdot,Y_{t_k},t_k) \r)\biggr|\biggr\}. \notag
\end{align}
We now need to bound the terms $A_1, A_2$ and $A_3$ with $c |m_1 -m_2|$. We first treat  the term $A_1$ using the Lipschitz property of $f_j$. Indeed, recalling  Corollary \ref{cordir} we first have
\begin{align}
&\pi_t^{m_1}(f_j) - \pi_t^{m_2}(f_j) \notag\\
 &=\frac{1}{(2\pi)^{N_1/2}}  \l( \int _{\R^{N_1}} \l(f_j (m_1 + \theta_t^{1/2} z) -f_j (m_2 + \theta_t^{1/2} z) \r) \exp(- \dfrac{|z |^2}{2}) d z \r).\notag
\end{align}
Hence, by the Lipschitz property of $f_j$ we see that
\begin{align}\label{eq:pilipschitz}
&|\pi_{t_k}^{m_1}(f_j) - \pi_{t_k}^{m_2}(f_j)| \leq \frac{c_1}{(2\pi)^{N_1/2}}\int _{\R^{N_1}} |m_1 - m_2| \exp( -\dfrac{|z |^2}{2}) dz \leq c_1 |m_1- m_2|
\end{align}
where $c_1$ is simply the Lipschitz constant of $f_j$. In particular,
\begin{eqnarray*} 
A_1\leq \delta c_1|m_1-m_2|.
\end{eqnarray*}
To treat the term $A_2$ we first note that
\begin{eqnarray*}
&&\biggl |\E^{ m_1,Y_{t_k},t_k} \l[v_j^{\check \Pi,\e}(m_{t_{k+1}},Y_{t_{k+1}},t_{k+1})  \r]-\E^{m_2,Y_{t_k},t_k} \l[ v_j^{\check \Pi,\e}(m_{t_{k+1}},Y_{t_{k+1}},t_{k+1})  \r] \biggr|^2\notag\\
&&\leq \biggl\|v_j^{\check \Pi,\e}(m_{t_{k+1}}^{ m_1,t_k},Y_{t_{k+1}}^{Y_{t_k},t_k},t_{k+1})-v_j^{\check \Pi,\e}(m_{t_{k+1}}^{m_2,t_k},Y_{t_{k+1}}^{Y_{t_k},t_k},t_{k+1}) \biggr\|_{L^2}^2
\end{eqnarray*}
where $(m_{t_{k+1}}^{ m_1,t_k},Y_{t_{k+1}}^{Y_{t_k},t_k})$ and $(m_{t_{k+1}}^{ m_2,t_k},Y_{t_{k+1}}^{Y_{t_k},t_k})$ indicate that at $t_k$ the processes $m$ and $Y$ are starting at $(m_1,Y_{t_k})$ and $(m_2,Y_{t_k})$, respectively. Now, using the induction hypothesis $I(k+1)=1$, i.e., the Lipschitz property of  $v_i^{\check \Pi,\e}(\cdot,Y_{t_{k+1}},t_{k+1})$ and $v_j^{\check \Pi,\e}(\cdot,Y_{t_{k+1}},t_{k+1})$, we can conclude that
\begin{eqnarray}\label{est1+}
A_2\leq E_{m,k+1}\biggl\|m_{t_{k+1}}^{ m_1,t_k}-m_{t_{k+1}}^{m_2,t_k}\biggr\|_{L^2}.
\end{eqnarray}
Next, using \eqref{est1+}, the equation for $m_t$ and elementary estimates, e.g., see the proof of Lemma $3.3$ in \cite{ACLP12}, we can conclude that
\begin{eqnarray*}
A_2 \leq E_{m,k+1}(1+L\delta) |m_1-m_2|
\end{eqnarray*}
for some constant $L$ independent of $\delta$, $m_1$ and $m_2$. Finally, using the assumption in \eqref{ass:switchingcosts+} we see that $A_3=0$.  Putting the estimates together we can conclude that
\begin{eqnarray*}
 |v_i^{\check \Pi,\e} (m_1,Y_{t_k},t_k) -  {v_i^{\check \Pi,\e} ( m_2, Y_{t_k},t_k)}| &\leq &A_1+A_2+A_3\notag\\
 &\leq&(\delta c_1+E_{m,k+1}(1+L\delta))|m_1-m_2|\notag\\
 &=:&E_{m,k}|m_1-m_2|
 \end{eqnarray*}
 where we have chosen $A=c_1$ in the definition of $E_{m,k}$. In particular, if $I(k+1)=1$ for some $k\in\{0,..,N-1\}$, then $I(k)=1$ and hence the lemma follows by induction.
\end{proof}

\begin{lemma}\label{lemma:Lipschitzy} Assume \eqref{assump1elel}, \eqref{ass:functions2} \eqref{ass:switchingcosts} and \eqref{ass:switchingcosts+}. Then, there exists a constant $C_Y$, independent of $\delta$,  such that
\begin{eqnarray*}
|v_i^{\check \Pi,\e}(m_{t_k},y_1,t_k) - v_i^{\check \Pi,\e}(m_{t_k},y_2,t_k) | \leq  C_{Y} |y_1 - y_2|.
\end{eqnarray*}
whenever $(i,m_{t_k},y_1,t_k), (i,m_{t_k},y_2,t_k) \in \Q \times \R^{N_1} \times \R^{N_2} \times \Pi$.
\end{lemma}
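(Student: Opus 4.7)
The plan is to prove Lemma \ref{lemma:Lipschitzy} by backward induction on $k$, in close parallel to the proof of Lemma \ref{lemma:Lipschitzm}. I would establish constants $\{E_{Y,k}\}_{k=0}^N$, independent of $\delta$, such that
$$|v_i^{\check \Pi,\e}(m_{t_k},y_1,t_k) - v_i^{\check \Pi,\e}(m_{t_k},y_2,t_k)| \leq E_{Y,k}\,|y_1-y_2|$$
for all $(i,m_{t_k},y_1,t_k),(i,m_{t_k},y_2,t_k)\in\Q\times\R^{N_1}\times\R^{N_2}\times\Pi$, with $E_{Y,N}=0$ and a one-step recursion $E_{Y,k}=A\delta + E_{Y,k+1}$ for some $A$ independent of $\delta$. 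The base case is immediate since $v_i^{\check\Pi,\e}(\cdot,\cdot,T)\equiv 0$.

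For the inductive step, I would apply the discrete DPP to $v_i^{\check\Pi,\e}(m_{t_k},y_1,t_k)$ and $v_i^{\check\Pi,\e}(m_{t_k},y_2,t_k)$ and split the difference into three pieces $B_1+B_2+B_3$, exactly mirroring the split into $A_1+A_2+A_3$ in the proof of Lemma \ref{lemma:Lipschitzm}. The switching-cost term $B_3$ vanishes thanks to the structural assumption \eqref{ass:switchingcosts+}, which is precisely why that hypothesis is imposed. For the running-payoff term $B_1$, the filter $\pi_{t_k}^{m_{t_k}}$ is the same Gaussian in both evaluations (same $m$), so the integrand difference reduces to $f_j(\cdot,y_1,t_k)-f_j(\cdot,y_2,t_k)$ and the Lipschitz property \eqref{ass:functions2} $(i)$ yields $B_1\leq\delta c_1|y_1-y_2|$, in complete analogy with \eqref{eq:pilipschitz}.

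The crucial term is the continuation-value difference $B_2$, which (as in \eqref{est1+}) is bounded in $L^2$ by
$$B_2 \leq \bigl\|v_j^{\check\Pi,\e}\bigl(\bar m_{t_{k+1}}^{m,y_1},\bar Y_{t_{k+1}}^{y_1},t_{k+1}\bigr) - v_j^{\check\Pi,\e}\bigl(\bar m_{t_{k+1}}^{m,y_2},\bar Y_{t_{k+1}}^{y_2},t_{k+1}\bigr)\bigr\|_{L^2}.$$
Here the plan is to couple the two simulations using the same Brownian drivers $W,U$ and the same initial signal $X_{t_k}=m+\theta_{t_k}^{1/2}Z$, which is legitimate because the law $\N(m,\theta_{t_k})$ of $X_{t_k}$ under $\E^{m,y,t_k}$ depends on $m$ only and not on $y$. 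Under this coupling the Euler-discretized signal $\bar X$ is identical pathwise in both simulations; hence from $d\bar Y=G_{\check s}\bar X_{\check s}ds + dU_{\check s}$ the $\bar Y$-increments agree and $\bar Y_{t_{k+1}}^{y_1}-\bar Y_{t_{k+1}}^{y_2}=y_1-y_2$ exactly. Plugging these identical increments into the linear filter step $d\bar m=(F-\theta G^\ast G)\bar m\,ds+\theta G^\ast d\bar Y$ with identical initial value $m$ forces the pathwise identity $\bar m_{t_{k+1}}^{m,y_1}=\bar m_{t_{k+1}}^{m,y_2}$. Assumption \eqref{dom} makes $D^\e$ a Cartesian product, so the projection acts componentwise and its $1$-Lipschitz property preserves both $|\bar Y_{t_{k+1}}^{\e,y_1}-\bar Y_{t_{k+1}}^{\e,y_2}|\leq|y_1-y_2|$ and $\bar m_{t_{k+1}}^{\e,y_1}=\bar m_{t_{k+1}}^{\e,y_2}$. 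The induction hypothesis (Lipschitz in $y$ at $t_{k+1}$ with constant $E_{Y,k+1}$), together with Lemma \ref{lemma:Lipschitzm} whose contribution is zero because the $m$-component agrees, then yields $B_2\leq E_{Y,k+1}|y_1-y_2|$.

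Putting the three bounds together gives $E_{Y,k}=\delta c_1+E_{Y,k+1}$, hence $E_{Y,k}\leq Tc_1=:C_Y$ uniformly in $\delta$, closing the induction. The main technical delicacy to check with care is the pathwise identity $\bar m_{t_{k+1}}^{y_1}=\bar m_{t_{k+1}}^{y_2}$ under the coupling: it leans on the linear, $Y$-only driven form of the Kalman-Bucy update in \eqref{eq:theta+} together with the fact that the signal's initial law depends on $m$ but not on $y$; one also needs to confirm that the componentwise projection onto the Cartesian $D^\e$ respects these bounds, which would fail for a non-product $D^\e$ and would then leave a genuine $m$-contribution requiring Lemma \ref{lemma:Lipschitzm} to absorb.
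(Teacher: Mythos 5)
Your proposal is correct and follows exactly the route the paper intends: the paper's own ``proof'' of Lemma \ref{lemma:Lipschitzy} is a one-line deferral to the argument of Lemma \ref{lemma:Lipschitzm}, and your backward induction with the $B_1+B_2+B_3$ split, the vanishing of the switching-cost term via \eqref{ass:switchingcosts+}, and the coupling with common $X$, $W$, $U$ is the natural instantiation of that analogy. Your observation that the coupling makes $\bar m$ pathwise identical and $\bar Y^{y_1}-\bar Y^{y_2}\equiv y_1-y_2$, so that the recursion is the exact $E_{Y,k}=\delta c_1+E_{Y,k+1}$ without the $(1+L\delta)$ growth factor needed in the $m$-case, is a correct (and slightly sharper) refinement.
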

\begin{proof} The proof of the lemma is analogous to the proof of Lemma \ref{lemma:Lipschitzm} and we here omit further details.
\end{proof}

\begin{remark} We emphasize that in this paper the structural assumption \eqref{ass:switchingcosts+} is only used in the proof of Lemma \ref{lemma:Lipschitzm} and Lemma \ref{lemma:Lipschitzy}. These lemmas are then only used in the proof of Proposition \ref{prop:regression} stated below. In particular, 
if Lemma \ref{lemma:Lipschitzm} and Lemma \ref{lemma:Lipschitzy} can be proved without assuming \eqref{ass:switchingcosts+} all of the remaining arguments go through unchanged.
\end{remark}

\subsection{Step 1: Controlling $E_1$} 
\begin{proposition} \label{lemma:bermudan}There exist positive constants $C_{11}$ and $C_{12}$, independent of  $\delta$, such that
$$|E_1|=| v_i(m_{t_k},Y_{t_k},t_k) - v_i^{\Pi}(m_{t_k},Y_{t_k},t_k) | \leq C_{11}\l( \delta \log(\frac {2T}{\delta})\r )^{1/2}+C_{12}\delta$$
whenever $(i,m_{t_k},Y_{t_k},t_k) \in \Q \times \R^{N_1} \times \R^{N_2} \times \Pi$.
\end{proposition}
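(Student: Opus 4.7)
The plan is to establish the two inequalities $v_i^\Pi \le v_i$ and $v_i - v_i^\Pi \le C_{11}(\delta\log(2T/\delta))^{1/2} + C_{12}\delta$ separately. The first is immediate from the inclusion $\A^{Y,\Pi}_{t_k,i} \subset \A^Y_{t_k,i}$. For the second, given $\eta > 0$, I would select an $\eta$-optimal strategy $\mu = (\{\tau_n\}, \{\xi_n\}) \in \A^Y_{t_k,i}$ with $J(\mu) \ge v_i(m_{t_k}, Y_{t_k}, t_k) - \eta$. Arguing exactly as in Lemma \ref{prop:finitenumberofswitches}, any such near-optimal $\mu$ has at most $K$ switches where $K$ depends only on $T$, $\sup_i\|f_i\|_\infty$, and $\nu$ (in particular it is independent of $\delta$ and of small $\eta$). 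I would then introduce the Bermudan companion $\tilde\mu \in \A^{Y,\Pi}_{t_k,i}$ of Definition \ref{def:approximatingstrategy}, which satisfies $0 \le \tilde\tau_n - \tau_n \le \delta$ and $\tilde\xi_n = \xi_n$, and reduce matters to bounding $J(\mu) - J(\tilde\mu)$.

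Writing $J(\mu) - J(\tilde\mu) = A + B$, where $A$ is the running-integral difference and $B$ the switching-cost difference, I would treat them in turn. The index processes $\mu_s$ and $\tilde\mu_s$ coincide outside the union $\bigcup_n [\tau_n, \tilde\tau_n]$, whose total length is at most $K\delta$. Boundedness of $f_i$ therefore gives $|A| \le 2K\delta\sup_i\|f_i\|_\infty$, which is absorbed into the $\delta$-term of the claimed bound.

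The main work lies in the $B$-term. Here I would first use Corollary \ref{cordir} to express $\pi_s(c_{\xi_{n-1},\xi_n}(\cdot, Y_s, s))$ as a Gaussian integral in $z$ parametrized by $m_s$ and $\theta_s$, and then combine the Lipschitz estimate \eqref{ass:functions2}(ii) with the deterministic regularity of $\theta$ (from the Riccati equation \eqref{eq:theta}, giving $\|\theta_{\tilde\tau_n}^{1/2} - \theta_{\tau_n}^{1/2}\| \le C\delta$ on $[0,T]$) to majorize each summand of $B$ by a constant times $|m_{\tilde\tau_n} - m_{\tau_n}| + |Y_{\tilde\tau_n} - Y_{\tau_n}| + \delta$. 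Using the SDEs \eqref{e-SDE+} and \eqref{eq:theta+}, both of which have coefficients uniformly bounded on $[0,T]$, I would reduce the stochastic part of these increments to Brownian increments over the gap $\tilde\tau_n - \tau_n \le \delta$ and apply Lemma \ref{prop:BMLPnorm} with $p=1$ to the driving Brownian motion $U$. This yields $\E[|Y_{\tilde\tau_n} - Y_{\tau_n}|] + \E[|m_{\tilde\tau_n} - m_{\tau_n}|] \le C(\delta\log(2T/\delta))^{1/2} + C\delta$ for each $n$. Summing over the at most $K$ switches and using $v_i - v_i^\Pi \le J(\mu) - J(\tilde\mu) + \eta$, then sending $\eta \downarrow 0$, would deliver the stated bound.

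The main obstacle I anticipate is the $B$-term: a naive Doob or It\^o-isometry estimate would only produce a $\delta^{1/2}$ rate, whereas the claimed bound carries the logarithmic factor $\log(2T/\delta)$. That logarithmic improvement is precisely captured by the uniform modulus-of-continuity estimate in Lemma \ref{prop:BMLPnorm}, which must be applied to $U$ at the random times $\tau_n, \tilde\tau_n$; note that because the stopping times are $\F^Y$-adapted and the gap is at most $\delta$ deterministically, the supremum over $|t-s|\le\delta$ in Lemma \ref{prop:BMLPnorm} dominates the random increment and the estimate passes through cleanly. A secondary, routine point is the verification that $\tilde\mu$ is $\F^Y$-adapted and lies in $\A^{Y,\Pi}_{t_k,i}$, which follows since $\tilde\tau_n$ is the first grid point after the $\F^Y$-stopping time $\tau_n$.
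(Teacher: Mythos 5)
Your proposal follows the same skeleton as the paper's proof: the inclusion $\A^{Y,\Pi}_{t_k,i}\subset\A^{Y}_{t_k,i}$ for one direction, the Bermudan companion $\tilde\mu$ of Definition \ref{def:approximatingstrategy} applied to a (near-)optimal $\mu$ for the other, the deterministic bound on the number of switches from Lemma \ref{prop:finitenumberofswitches}, the $O(\delta)$ bound on the running-payoff discrepancy via boundedness of $f_i$, and Lemma \ref{prop:BMLPnorm} to extract the $(\delta\log(2T/\delta))^{1/2}$ rate from the increments $Y_{\tilde\tau_n}-Y_{\tau_n}$. The one place you genuinely diverge is the switching-cost term. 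The paper splits it further into $E_{121}$ (same measure $\pi^{m_{\tau_n}}_{\tau_n}$, shifted arguments $(Y_{\tilde\tau_n},\tilde\tau_n)$, handled exactly as you handle the $Y$-increment) and $E_{122}$ (same arguments, measure shifted from $\pi^{m_{\tau_n}}_{\tau_n}$ to $\pi^{m_{\tilde\tau_n}}_{\tilde\tau_n}$), and controls $E_{122}$ by integrating the Kushner--Stratonovich equation \eqref{eq.kushner_stratonovich_equation} between $\tau_n$ and $\tilde\tau_n$, using only that $c_{i,j}\in C^2_b$ and $h$ is bounded; this avoids touching $m$ at all and is not tied to the Gaussian structure. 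You instead invoke Corollary \ref{cordir} and the Lipschitz property of $c_{i,j}$ to reduce the measure shift to the increments $|m_{\tilde\tau_n}-m_{\tau_n}|$ and $\|\theta_{\tilde\tau_n}^{1/2}-\theta_{\tau_n}^{1/2}\|$, which is perfectly serviceable in the Kalman--Bucy setting but requires two small additional facts you gloss over: the drift of \eqref{eq:theta+} is linear in $m_t$ rather than uniformly bounded, so you need a moment bound such as $\E[\sup_{t\le T}|m_t|]<\infty$ (true, since $m$ is a Gaussian process on a compact interval) before the gap estimate goes through; and the matrix square root is only H\"older-$1/2$ where $\theta$ degenerates, so $\|\theta_{\tilde\tau_n}^{1/2}-\theta_{\tau_n}^{1/2}\|$ is safely $O(\delta^{1/2})$ rather than $O(\delta)$ in general --- still within the claimed rate, but worth stating. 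Net effect: the paper's route for this term is more robust (it would survive outside the linear filter), while yours buys a more elementary, self-contained computation at the cost of leaning on the explicit Gaussian form of $\pi_t$.
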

\begin{proof} Recall that
\begin{align} 
v_i(m_{t_k},Y_{t_k},t_k) = \sup _{\mu \in \A^{Y}_{t_k,i}} \E ^{m_{t_k},Y_{t_k},t_k}&\biggl [\int \limits _{t_k} ^T \pi^{m_s}_s \l ( f_{\mu_s}(\cdot,Y_{s},s) \r )ds\notag\\
& - \sum_{t_k \leq \tau_n \leq T} \pi_{\tau_{n}}^{m_{\tau_{n}}} \l (c_{ \xi_{n-1}, \xi_{n}}(\cdot, Y_{\tau_{n}},\tau_{n}) \r) \biggr ], \notag
\end{align}
and
\begin{align}\label{eq:bermudanagain}
v_i^{\Pi}(m_{t_k},Y_{t_k}, t_k)= \sup _{\mu \in \A^{Y,\Pi}_{t_k,i}} \E^ {m_{t_k},Y_{t_k}, t_k} &\biggl [\int_{t_k}^T \pi_s ^{m_{s}}\l ( f_{\mu_{s}}(\cdot, Y_s,s) \r) ds\notag\\
& - \sum _{t_k \leq \tau_n \leq T} \pi_{\tau_{n}}^{m_{\tau_{n}}} \l (c_{\xi_{n-1},\xi_{{n}}}( \cdot, Y_{\tau_{n}},\tau_{n})\r )\biggr ]. 
\end{align}
Using Lemma \ref{prop:finitenumberofswitches} we may assume that  $N(\mu)<\infty$. Given $\mu=(\{\tau_n\}_{n\geq 0},\{\xi_n\}_{n\geq 0}) \in \A^Y_{t,i}$
we consider the strategy $\tilde \mu = (\{\tilde \tau_n\}_{n\geq 0},\{\tilde \xi_n\}_{n\geq 0})$,  where
\begin{align} 
\tilde \tau_n = \min \{t \in \Pi:  t \geq \tau_n\}, \quad \tilde \xi_n = \xi_n. \notag
\end{align}
Then $\tilde \mu\in\A^{Y,\Pi}_{t_k,i}$ and it is the Bermudan strategy associated  to $\mu \in \A^{Y}_{t_k,i}$ $\tilde \mu\in\A^{Y,\Pi}_{t_k,i}$, see Definition \ref{def:approximatingstrategy}. Using this and \eqref{eq:bermudanagain} we see that
\begin{align} 
 v_i^{\Pi}(m_{t_k},Y_{t_k},t_k)\geq \sup _{\mu \in \A^{Y}_{t_k,i}} \E ^{m_{t_k},Y_{t_k},t_k}&\biggl [\int \limits _{t_k} ^T \pi^{m_s}_s \l ( f_{\tilde\mu_s}(\cdot,Y_{s},s) \r )ds\notag\\
& - \sum_{t_k \leq \tilde \tau_n \leq T} \pi_{\tilde\tau_{n}}^{m_{\tilde\tau_{n}}} \l (c_{ \tilde\xi_{n-1}, \tilde\xi_{n}}(\cdot, Y_{\tilde\tau_{n}},\tilde\tau_{n}) \r) \biggr ]. \notag
\end{align}
Furthermore, since ${\A^{Y,\Pi}_{t_k,i}} \subset {\A^Y_{t_k,i}}$ we also have that $v_i^{\Pi}(m_{t_k},Y_{t_k},t_k) \leq v_{i}(m_{t_k},Y_{t_k},t_k)$. Putting these estimates together we can conclude that
\begin{equation*}
\left | v_{i}(m_{t_k},Y_{t_k},t_k) - v_i^{\Pi}(m_{t_k},Y_{t_k},t_k)   \right | \leq \sup _{\mu \in \A^{Y}_{t_k,i}} \left ( E_{11}^{m_{t_k},Y_{t_k},t_k}(\mu) +  E_{12}^{m_{t_k},Y_{t_k},t_k}(\mu) \right )
\end{equation*}
where
\begin{align}
 E_{11}^{m_{t_k},Y_{t_k},t_k}(\mu) &= \E^{m_{t_k},Y_{t_k},t_k} \biggl [\int _{t_k} ^T \biggl |   \pi^{m_s}_s(f_{\mu_s}(\cdot, Y_s,s))  -
  \pi^{m_s}_s(f_{\tilde \mu_s}(\cdot, Y_s,s) ) \biggr | ds \biggr],  \notag \\
 E_{12}^{m_{t_k},Y_{t_k},t_k}(\mu) &= \E^{m_{t_k},Y_{t_k},t_k} \biggl [\sum_{t_k \leq \tau_n \leq T}\biggl |\pi^{m_{\tau_{n}}}_{\tau_{n}} \l(c_{\xi_{n-1},\xi_{n}}(\cdot, Y_{\tau_{n}},\tau_{n}) \r)\notag\\
 &\qquad\qquad\qquad\qquad\qquad-\pi^{m_{\tilde \t_{n}}}_{\tilde \tau_{n}} \l(c_{\tilde \xi_{n-1},\tilde \xi_{n}}(\cdot, Y_{\tilde \tau_{n}},\tilde \tau_{n}) \r) \biggr | \biggr]\notag.
\end{align}
Since, by assumption, $f_i$ is bounded we immediately see that
\begin{equation*}
E_{11}^{m_{t_k},Y_{t_k},t_k}(\mu)  \leq c \sup_i \|f_i\|_\infty \left (\E ^{m_{t_k},Y_{t_k},t_k} \biggl [ \int _{t_k}^T \chi_{\{\mu_s \neq \tilde \mu_s\}} ds \biggr ] \right).
\end{equation*}
Furthermore, using Lemma \ref{prop:finitenumberofswitches} we see that
$$
\E ^{m_{t_k},Y_{t_k},t_k} \biggl [ \int _{t_k}^T \chi_{\{\mu_s \neq \tilde \mu_s\}} ds \biggr] \leq \delta \E ^{m_{t_k},Y_{t_k},t_k} [ N(\mu)  ] \leq
\biggl [ \frac {2T\sup_i \|f_i\|_\infty}{\nu}\biggr ] \delta.
$$
Putting these estimates together we can conclude that
$$
 E_{11}^{m_{t_k},Y_{t_k},t_k}(\mu) \leq c \biggl [ \frac {2 T (\sup_i \|f_i\|_\infty)^2}{\nu} \biggr ]  \delta
$$
and this gives the appropriate bound on $ E_{11}^{m_{t_k},Y_{t_k},t_k}(\mu)$. To estimate $E_{12}^{m_{t_k},Y_{t_k},t_k}(\mu)$ we first note that
\begin{eqnarray*}
  E_{12}^{m_{t_k},Y_{t_k},t_k}(\mu)\leq  E_{121}^{m_{t_k},Y_{t_k},t_k}(\mu) + E_{122}^{m_{t_k},Y_{t_k},t_k}(\mu)
\end{eqnarray*}
where
\begin{align}
E_{121}^{m_{t_k},Y_{t_k},t_k}(\mu)&=\E ^{m_{t_k},Y_{t_k},t_k}  \biggl [\sum_{t_k \leq \tau_n \leq T}  \biggl |\pi^{m_{\tau_{n}}}_{\tau_{n}}\l (c_{\xi_{n-1},\xi_{n}}(\cdot, Y_{\tau_{n}},\tau_{n})\r)\notag\\
 &\qquad\qquad\qquad\qquad\qquad-\pi^{m_{\tau_{n}}}_{\tau_{n}}\l (c_{\tilde \xi_{n-1},\tilde \xi_{n}}(\cdot, Y_{\tilde \tau_{n}},\tilde \tau_{n})\r ) \biggr | \biggr ],\notag \\
E_{122}^{m_{t_k},Y_{t_k},t_k}(\mu)&=\E ^{m_{t_k},Y_{t_k},t_k} \biggl [\sum_{t_k \leq \tau_n \leq T}\biggl |\pi^{m_{\tau_{n}}}_{\tau_{n}}\l (c_{\xi_{n-1},\xi_{n}}(\cdot, Y_{\tilde \tau_{n}},\tilde \tau_{n})\r )\notag\\
 &\qquad\qquad\qquad\qquad\qquad-\pi^{m_{\tilde \tau_{n}}}_{\tilde \tau_{n}}\l (c_{\tilde \xi_{n-1},\tilde \xi_{n}}(\cdot, Y_{\tilde \tau_{n}},\tilde \tau_{n})\r ) \biggr| \biggr ] \notag.
\end{align}
By using that $(\tilde \xi_{n-1},\tilde \xi_{n})=(\xi_{n-1},\xi_{n})$ by construction, \eqref{ass:functions2} $(ii)$, and that $(Y_{\tilde \tau_{n}},\tilde \tau_{n})$ is $F^Y_{\tau_{n}}$-adapted, we see that there exists a constant $c$ such that
\begin{align} \label{eq:E21est}
 E_{121}^{m_{t_k},Y_{t_k},t_k}(\mu)&\leq c \E ^{m_{t_k},Y_{t_k},t_k} \l[ \sum_{t_k \leq \tau_n \leq T}\left ( \left | \tau_{n} - \tilde \tau_{n} \right | + \left | Y_{ \tau_{n}} - Y_{\tilde \tau_{n}} \right |   \right ) \r]\notag \\
&\leq c \E ^{m_{t_k},Y_{t_k},t_k} \l[  N(\mu)\left ( \delta +   \sup \limits_{\substack{0\leq s,u \leq T \\ |s-u|\leq \delta }} |Y_u - Y_s|    \right ) \r ].
\end{align}
We now recall that
$$Y_t=Y_0+\int_0^th(X_r)dr+U_t,$$
where $U_t$ is a standard $m_2$-dimensional Brownian motion. Hence,
\begin{align}
|Y_u - Y_s|\leq|U_u - U_s|+\int_s^uh(X_r)dr\leq|U_u - U_s|+c|u - s| \notag
\end{align}
since $h$ is a bounded function.
Applying Lemma \ref{prop:BMLPnorm} gives
\begin{align}\label{eq:diffY}
\mathbb{E}^{m_{t_k},Y_{t_k},t_k}\l[ \l (  \sup \limits_{\substack{0\leq s,u \leq T \\ |s-u|\leq \delta }} |Y_u - Y_s|  \r)^2 \r ]\leq c\left(\delta\log\left(\frac{2T}{\delta}\right)\right)^{}+c\delta^2.
\end{align}
Combining \eqref{eq:E21est}, \eqref{eq:diffY}, Lemma \ref{prop:finitenumberofswitches},  and the Cauchy-Schwartz inequality we can therefore conclude that
$$
 E_{121}^{m_{t_k},Y_{t_k},t_k}(\mu)\leq  c \l(   \frac {2 T (\sup_i \|f_i\|_\infty)^2}{\nu}  \r) \biggl [\left(\delta\log\l(\frac {2T}{\delta}\r)\right)^{1/2}+\delta\biggr ]
$$
and this completes the estimate of $E_{121}^{m_{t_k},Y_{t_k},t_k}(\mu)$. Hence, it now only remains to prove that $E_{122}^{m_{t_k},Y_{t_k},t_k}(\mu)\leq  c \delta $. To start the estimate, recall that $\pi_t^{m_{t}}$ satisfies the Kushner-Stratonovich equation \eqref{eq.kushner_stratonovich_equation}, i.e., for $0\leq s < t \leq T$ and $\phi\in C^{2}_b(\R^{N_1})$, we have
\begin{equation}\label{eq.pi_probability-}
d\pi^{m_t}_t(\phi)=\pi^{m_t}_t(\H\phi)dt+\sum_{k=1}^{N_2}[\pi^{m_t}_t(h_k\phi)-\pi^{m_t}_t(h_k)\pi(\phi)][dY_t^k-\pi_t^{m_t}(h_k)dt].
\end{equation}
Furthermore, since $\pi^{m_t}_t$ is a probability-measure valued process we have for any such $\phi$ that
\begin{equation*}
\pi_t^{m_t}(\phi)=\int_{\mathbb{R}^{N_1}}\phi(x)\pi_t^{m_t}(dx)\leq C_\phi\pi_t^{m_t}(\mathbb R^{N_1})=C_\phi
\end{equation*}
where $C_\phi$ denotes the upper bound of $\phi$. We also note, using \eqref{eq.pi_probability-}, that
\begin{align}
&\E\biggl [|\pi_t^{m_t}(\phi)-\pi_s^{m_s}(\phi)|\biggr ]\nonumber\\
=&\E\biggl [\int_s^t\left(\pi_u^{m_u}(\mathcal H\phi)-\sum_{k=1}^{N_2}(\pi^{m_u}_u(h_k\phi)-\pi^{m_u}_u(h_k)\pi(\phi))\right)du\biggr ]\nonumber\\
+&\E\biggl [\int_s^t\sum_{k=1}^{N_2}\left(\pi^{m_u}_u(h_k\phi)-\pi^{m_u}_u(h_k)\pi(\phi)\right)dY_u\biggr ]\nonumber\\
\leq&\hat C_\phi|t-s|, \notag
\end{align}
for yet another constant $\hat C_{\phi}$, independent of $t$ and $s$. Note that essentially $\hat C_{\phi}$ only depends on the
$C^{2}_b(\R^{N_1})$-bounds of $\phi$ and the functions/parameters defining the system for $Y$. Now, applying this, with $t=\tilde \tau_{n}$, $s=\tau_{n}$, recalling Lemma \ref{prop:finitenumberofswitches} and that $c_{i,j}\in C^{2}_b(\R^{N_1})$,  we can conclude that
\begin{align*}
E_{122}^{m_{t_k},Y_{t_k},t_k}(\mu)\leq c\delta.
\end{align*}
This completes the proof of the proposition.
\end{proof}

\begin{remark}
If we have no $X$-dependence on the switching costs, then $E_{122} =0$.
\end{remark}

\subsection{Step 2: Controlling $E_2$}
\begin{proposition}\label{lemma:discretization} There exist positive constants $C_{21}$, $C_{22}$, and $C_{23}$, independent of  $\delta$, such that
\begin{align}
|E_2|=|v^\Pi_i(m_{t_k},Y_{t_k},t_k) - v_i^{\check\Pi}(m_{t_k}, Y_{t_k},t_k) | \leq C_{21} \l ( \delta \log(\frac {2T}{\delta})\r )^{1/2}+C_{22} \delta + C_{23} \delta^{1/2}\notag
\end{align}
whenever $(i,m_{t_k},Y_{t_k},t_k) \in \Q \times \R^{N_1} \times \R^{N_2} \times \Pi$.
\end{proposition}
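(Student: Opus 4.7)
The plan is to follow the same template as the proof of Proposition \ref{lemma:bermudan}. Since both $v_i^{\Pi}$ and $v_i^{\check\Pi}$ are suprema over the \emph{same} strategy class $\A^{Y,\Pi}_{t_k,i}$, the elementary inequality $|\sup_\mu X(\mu)-\sup_\mu Z(\mu)|\leq \sup_\mu|X(\mu)-Z(\mu)|$ reduces matters to bounding, for every fixed $\mu=(\{\tau_n\},\{\xi_n\})\in\A^{Y,\Pi}_{t_k,i}$,
\[
|J^{\Pi}(\mu)-J^{\check\Pi}(\mu)|\leq A(\mu)+B(\mu),
\]
where $A(\mu)$ is the running-cost discrepancy and $B(\mu)$ is the switching-cost discrepancy. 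Thanks to Lemma \ref{prop:finitenumberofswitches} we may restrict to strategies with $N(\mu)\leq 2T\sup_i\|f_i\|_\infty/\nu$ a.s., which later tames the sum over switches.

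For $A(\mu)$ I would write the continuous-time integral as $\sum_{\ell=k}^{N-1}\int_{t_\ell}^{t_{\ell+1}}\pi_s^{m_s}(f_{\mu_s}(\cdot,Y_s,s))\,ds$ and, using that $\mu_s=\mu_{t_\ell}$ on $[t_\ell,t_{\ell+1})$ by the Bermudan restriction, compare it term-by-term with $\delta\,\pi_{t_\ell}^{\bar m_{t_\ell}}(f_{\mu_{t_\ell}}(\cdot,\bar Y_{t_\ell},t_\ell))$ via a four-step telescoping: (i) Lipschitz continuity of $f$ in $(y,t)$ handles $\pi_s^{m_s}(f_{\mu_{t_\ell}}(\cdot,Y_s,s))-\pi_s^{m_s}(f_{\mu_{t_\ell}}(\cdot,Y_{t_\ell},t_\ell))$; (ii) the Lipschitz dependence of $\pi_t^m$ on $m$ established in \eqref{eq:pilipschitz} controls $\pi_s^{m_s}-\pi_{t_\ell}^{m_{t_\ell}}$ acting on a fixed bounded test function; (iii) the same \eqref{eq:pilipschitz} bound converts $\pi_{t_\ell}^{m_{t_\ell}}\rightsquigarrow \pi_{t_\ell}^{\bar m_{t_\ell}}$ into $|m_{t_\ell}-\bar m_{t_\ell}|$; (iv) Lipschitz continuity of $f$ in $y$ finishes with $|Y_{t_\ell}-\bar Y_{t_\ell}|$. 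The resulting expectations are then bounded by three standard ingredients: $\E|s-t_\ell|\leq \delta$; Lemma \ref{prop:BMLPnorm} applied to $Y=Y_0+\int h(X_r)\,dr+U$ gives $\E\bigl[\sup_{|s-u|\leq\delta}|Y_s-Y_u|\bigr]\leq c(\delta\log(2T/\delta))^{1/2}+c\delta$, and the analogous argument for the Kalman--Bucy process $m$ (whose driver is the bounded process $\theta_tG_t^\ast$ times $dY_t$) gives $\E|m_s-m_{t_\ell}|\leq c\delta^{1/2}$; standard Euler-scheme error bounds provide $\sup_\ell\E|m_{t_\ell}-\bar m_{t_\ell}|+\sup_\ell\E|Y_{t_\ell}-\bar Y_{t_\ell}|\leq c\delta^{1/2}$. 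Summing $N=T/\delta$ intervals each of length $\delta$ yields the bound $A(\mu)\leq C_{21}(\delta\log(2T/\delta))^{1/2}+C_{22}\delta+C_{23}\delta^{1/2}$.

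For $B(\mu)$ the discretization is easier: because $\mu\in\A^{Y,\Pi}_{t_k,i}$ we have $\tau_n\in\Pi$, so the switching instants are identical in the two functionals and there is no time-mismatch contribution. Each summand becomes
\[
\bigl|\pi_{\tau_n}^{m_{\tau_n}}(c_{\xi_{n-1},\xi_n}(\cdot,Y_{\tau_n},\tau_n))-\pi_{\tau_n}^{\bar m_{\tau_n}}(c_{\xi_{n-1},\xi_n}(\cdot,\bar Y_{\tau_n},\tau_n))\bigr|,
\]
which by \eqref{eq:pilipschitz} and the Lipschitz property of $c_{i,j}$ in $(x,y)$ is bounded by $C(|m_{\tau_n}-\bar m_{\tau_n}|+|Y_{\tau_n}-\bar Y_{\tau_n}|)$. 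Combining the $O(\delta^{1/2})$ uniform Euler bounds above with the deterministic cap $N(\mu)\leq 2T\sup_i\|f_i\|_\infty/\nu$ from Lemma \ref{prop:finitenumberofswitches} yields $B(\mu)\leq C\delta^{1/2}$, which is absorbed into the $C_{23}\delta^{1/2}$ term.

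The main obstacle I anticipate is keeping the regularity estimates on the filter $\pi_t^{m_t}$ and on the conditional mean $m_t$ uniform in $\ell$ and in the admissible strategy $\mu$. The Lipschitz bound \eqref{eq:pilipschitz} is crucial: it converts all comparisons at the level of measures into Euclidean distances $|m-\bar m|$ on which the standard Kalman--Bucy/Euler estimates act. Once that is set up, the split is bookkeeping, and the $(\delta\log(2T/\delta))^{1/2}$ term arises solely from the Brownian modulus of continuity of $Y$ inside intervals of length $\delta$, while the $\delta^{1/2}$ term collects both the $m$-increment bounds and the Euler errors, and the $\delta$ term comes from the time Lipschitz bounds of $f$ and $c_{i,j}$.
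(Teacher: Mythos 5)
Your proposal is correct and follows essentially the same route as the paper: the paper also reduces to a fixed strategy in $\A^{Y,\Pi}_{t_k,i}$, splits the error into a running-payoff part and a switching-cost part (it further separates the running-payoff part into a time-discretization term $E_{21}$ and an Euler-approximation term $E_{22}$, which you merge into one telescoping), and invokes the same ingredients — the bound \eqref{eq:pilipschitz}, Lemma \ref{prop:BMLPnorm}, Lemma \ref{prop:finitenumberofswitches}, and standard Euler error estimates — to obtain exactly the three terms $(\delta\log(2T/\delta))^{1/2}$, $\delta$, and $\delta^{1/2}$.
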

\begin{proof}
We immediately see that
\begin{align}
\left |v^\Pi_i(m_{t_k},Y_{t_k},t_k) - v_i^{\check\Pi}(m_{t_k},Y_{t_k},t_k) \right | \leq E_{21} + E_{22} + E_{23}, \notag
\end{align}
where
\begin{align}
E_{21} &= \sup_{\mu \in \A^{Y,\Pi}_{t_k,i}}  \E^{m_{t_k},Y_{t_k},t_k} \biggl [\sum_{l=k}^{N-1} \int_{t_l}^{t_{l+1}}\biggl | \pi^{m_{s}}_{s}\l (f_{\mu_{t_l}}(\cdot, Y_{s}, s)\r)- \pi^{m_{t_l}}_{t_l}\l(f_{\mu_{t_l}}(\cdot, Y_{t_l}, t_l) \r)\biggr |ds \biggr ], \notag \\
E_{22} &= \sup_{\mu \in \A^{Y,\Pi}_{t_k,i}}  \E^{m_{t_k},Y_{t_k},t_k} \biggl [\delta \sum_{l=k}^{N-1}\biggl | \pi^{m_{t_l}}_{t_l}\l (f_{\mu_{t_l}}(\cdot, Y_{t_l}, t_l)\r)- \pi^{\bar m_{t_l}}_{t_l}\l(f_{\mu_{t_l}}(\cdot, \bar Y_{t_l}, t_l) \r)\biggr | \biggr ], \notag \\
E_{23} &=\sup_{\mu \in \A^{Y,\Pi}_{t_k,i}} \E^{m_{t_k},Y_{t_k},t_k} \biggl [\sum_{t_k \leq \tau_n \leq T} \biggl |\pi^{m_{\tau_{n}}}_{\tau_{n}} \l(c_{\xi_{n-1},\xi_{n}}(\cdot, Y_{\tau_{n}},\tau_{n}) \r)\notag\\
 &\qquad\qquad\qquad\qquad\qquad\qquad-\pi^{\bar m_{ \t_{n}}}_{\tau_{n}} \l(c_{\xi_{n-1},\xi_{n}}(\cdot, \bar Y_{\tau_{n}}, \tau_{n}) \r) \biggr | \biggr], \notag
\end{align}
and where $\bar m$, $\bar Y$ denote the Euler discretizations, starting at $m_{t_k},Y_{t_k}$ at $t=t_k$, of $m$ and $Y$, respectively. The rest of the proof is now a combination of the techniques used in the proofs of Lemma \ref{lemma:Lipschitzm} and Proposition \ref{lemma:bermudan},  in combination with standard error estimates for the Euler approximation. To be more precise, by proceeding along the lines of the proof of Proposition \ref{lemma:bermudan} we see
that
\begin{eqnarray*}
E_{21} \leq c\biggl [\left(\delta\log\l(\frac {2T}{\delta}\r)\right)^{1/2}+\delta\biggr ].
\end{eqnarray*}
To estimate $E_{22}$, let
\begin{align}
 E_{22,l}(\mu)&:=\E^{m_{t_k},Y_{t_k},t_k} \biggl [\biggl | \pi^{m_{t_l}}_{t_l}\l (f_{\mu_{t_l}}(\cdot, Y_{t_l}, t_l)\r)- \pi^{\bar m_{t_l}}_{t_l}\l(f_{\mu_{t_l}}(\cdot, \bar Y_{t_l}, t_l) \r)\biggr | \biggr ]. \notag
\end{align}
Then
$$E_{22,l}(\mu)\leq E_{221,l}(\mu)+E_{222,l}(\mu),
$$
where
\begin{align}
 E_{221,l}(\mu)&:= \E^{m_{t_k},Y_{t_k},t_k} \biggl [\biggl | \pi^{m_{t_l}}_{t_l}\l (f_{\mu_{t_l}}(\cdot, Y_{t_l}, t_l)\r)- \pi^{ m_{t_l}}_{t_l}\l(f_{\mu_{t_l}}(\cdot, \bar Y_{t_l}, t_l) \r)\biggr | \biggr ], \notag \\
 E_{222,l}(\mu)&:=\E^{m_{t_k},Y_{t_k},t_k} \biggl [\biggl | \pi^{m_{t_l}}_{t_l}\l (f_{\mu_{t_l}}(\cdot, \bar Y_{t_l}, t_l)\r)- \pi^{\bar m_{t_l}}_{t_l}\l(f_{\mu_{t_l}}(\cdot, \bar Y_{t_l}, t_l) \r)\biggr | \biggr ]. \notag
\end{align}
Now, using the Lipschitz property of $f_{i}$, and arguing as in \eqref{eq:pilipschitz}, we the see that
\begin{align}
 E_{221,l}(\mu)+E_{222,l}(\mu)\leq c\E^{m_{t_k},Y_{t_k},t_k} \biggl [| m_{t_l}-\bar m_{t_l}|+| Y_{t_l}-\bar Y_{t_l}|\biggr ]. \notag
\end{align}
In particular,
\begin{align}
E_{22} &= \delta\sup_{\mu \in \A^{Y,\Pi}_{t,i}}  \sum_{l=k}^{N-1} E_{22,l}(\mu)\notag\\
&\leq c\delta \E^{m_{t_k},Y_{t_k},t_k} \biggl [\sum_{l=k}^{N-1}\biggl (| m_{t_l}-\bar m_{t_l}|+| Y_{t_l}-\bar Y_{t_l}|\biggr )\biggr ]. \notag
\end{align}
Using standard error estimates for Euler approximations, e.g., see \cite{KP92} Section 10.2, we also have that
\begin{align*}
\E^{m_{t_k},Y_{t_k},t_k} \biggl [| m_{t_l}-\bar m_{t_l}|+| Y_{t_l}-\bar Y_{t_l}|\biggr ]\leq c\delta^{1/2} \notag
\end{align*}
for all $l\in\{k,\dots,N\}$. In particular,
\begin{align}
E_{22} &\leq c\delta N\delta^{1/2}=cT\delta^{1/2}. \notag
\end{align}
Finally, repeating the argument for $E_{22}$, with $f_i$ replaced by $c_{i,j}$, and invoking Lemma \ref{prop:finitenumberofswitches} we also see that
\begin{align}
E_{23} &\leq c\biggl (\frac {2 T\sup_i \|f_i\|_\infty}{\nu}\biggr )\delta^{1/2} \notag
\end{align}
and hence the proof of the proposition is complete.
\end{proof}

\subsection{Step 3: Controlling $E_3$}
\begin{proposition}\label{lemma:projected} There exists a constant $C_{3}$, independent of $\e$, such that
$$|E_3|=|v_i^{\check\Pi}(m_{t_k},Y_{t_k},t_k)- v_i^{\check\Pi,\e}(m_{t_k},Y_{t_k},t_k)| \leq C_{3} \e,$$
whenever $(i,m_{t_k},Y_{t_k},t_k) \in \Q \times \R^{N_1} \times \R^{N_2} \times \Pi$.
\end{proposition}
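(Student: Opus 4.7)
My plan is to follow the same template used for the previous two error terms $E_1$ and $E_2$: bound the difference of suprema by the supremum of differences over the common strategy class $\A^{Y,\Pi}_{t_k,i}$, split into a running-payoff error $E_{31}(\mu)$ and a switching-cost error $E_{32}(\mu)$, reduce each to Lipschitz estimates in $m$ and $y$, and close using the projection inequality \eqref{eq:projection}.

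For the pointwise integrand of $E_{31}(\mu)$ at time $t_\ell$, I add and subtract $\pi_{t_\ell}^{\bar m^\e_{t_\ell}}(f_{\mu_{t_\ell}}(\cdot, \bar Y_{t_\ell}, t_\ell))$. The first resulting piece $|\pi_{t_\ell}^{\bar m_{t_\ell}}(f_{\mu_{t_\ell}}(\cdot, \bar Y_{t_\ell}, t_\ell)) - \pi_{t_\ell}^{\bar m^\e_{t_\ell}}(f_{\mu_{t_\ell}}(\cdot, \bar Y_{t_\ell}, t_\ell))|$ is handled exactly as in \eqref{eq:pilipschitz}, using Corollary \ref{cordir} together with the Lipschitz continuity \eqref{ass:functions2}(i) of $f_j$ in its first coordinate, and gives a bound $c_1 |\bar m_{t_\ell} - \bar m^\e_{t_\ell}|$. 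The second piece is dominated by $c_1 |\bar Y_{t_\ell} - \bar Y^\e_{t_\ell}|$ using the Lipschitz continuity of $f_j$ in its second coordinate and the fact that $\pi_{t_\ell}^{\bar m^\e_{t_\ell}}$ is a probability measure. Summing over $\ell$, multiplying by $\delta$, taking expectations and applying \eqref{eq:projection} at each grid point $t_\ell$ then gives $E_{31}(\mu) \leq 2 c_1 T\,\e$, uniformly in $\mu$.

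For $E_{32}(\mu)$, the same add-and-subtract decomposition, now with $c_{\xi_{n-1},\xi_n}$ in place of $f_{\mu_{t_\ell}}$ and using \eqref{ass:functions2}(ii), bounds the summand at switching time $\tau_n \in \Pi$ by $c_2 \bigl(|\bar m_{\tau_n} - \bar m^\e_{\tau_n}| + |\bar Y_{\tau_n} - \bar Y^\e_{\tau_n}|\bigr)$. To convert this into a bound uniform in $\mu$ I need to control $N(\mu)$. The contrapositive of the argument in Lemma \ref{prop:finitenumberofswitches} shows that any strategy with $N(\mu) > 2T\sup_i\|f_i\|_\infty/\nu =: C_N$ is strictly dominated by the trivial strategy $\mu_0$, so both $v_i^{\check\Pi}$ and $v_i^{\check\Pi,\e}$ may be written as suprema over strategies with $N(\mu) \leq C_N$. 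Applying \eqref{eq:projection} termwise over the (at most $C_N$) switching times then yields $E_{32}(\mu) \leq 2 c_2 C_N\,\e$.

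Combining the two estimates produces the required constant $C_3 := 2 c_1 T + 2 c_2 C_N$, manifestly independent of $\e$ (and of $\delta$). I expect the only step requiring some thought to be the uniform switch-count reduction, which depends only on \eqref{ass:functions2} and \eqref{ass:switchingcosts}(ii) and therefore applies equally to the Euler-discretized and to the projected Euler-discretized dynamics; the rest is linearity of expectation on top of the Lipschitz estimates already developed in Lemma \ref{lemma:Lipschitzm}.
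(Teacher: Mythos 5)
Your proposal is correct and follows essentially the same route as the paper: bound the difference of suprema by the supremum of differences, use the Lipschitz estimate of the type \eqref{eq:pilipschitz} (via Corollary \ref{cordir}) for both the $f$- and $c$-terms, and close with the projection bound \eqref{eq:projection} together with the switch-count bound from Lemma \ref{prop:finitenumberofswitches}. The only difference is that you spell out the reduction to strategies with at most $C_N$ switches, which the paper leaves implicit when it invokes Lemma \ref{prop:finitenumberofswitches}.
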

\begin{proof} We first note that
\begin{align}
&|v_i^{\check\Pi}(m_{t_k},Y_{t_k},t_k)- v_i^{\check\Pi,\e}(m_{t_k},Y_{t_k},t_k)| \notag \\
&\leq\sup_{\mu \in \A^{Y,\Pi}_{t_k,i}}  \E^{m_{t_k},Y_{t_k},t_k} \bigg [\int_{t_k}^T \l |\pi^{m_{\check s}}_{\check s}\l (f_{\mu_{\check s}}(\cdot, Y_{\check s}, \check s)\r) - \pi^{m^\e_{\check s}}_{\check s} \l (f_{\mu_{s}}(\cdot, Y^\e_{\check s}, \check s)\r ) \r | ds \notag \\
&\qquad\qquad\qquad\qquad + \sum _{t_k\leq \tau_n \leq T} \l | \pi^{m_{\tau_n}}_{\tau_n}\l (c_{{\xi_{n-1}}{\xi_{n}}}(\cdot, Y_{\tau_n},\tau_n)\r) - \pi^{m^\e_{\tau_n}}_{\tau_n} \l (c_{{\xi_{n-1}}{\xi_{n}}}(\cdot, Y^\e_{\tau_n}, \tau_n)\r)  \r| \bigg] \notag
\end{align}
Using a by now familiar argument, based on the Lipschitz property of $f_i$ and $c_{i,j}$, and \eqref{eq:pilipschitz}, we see that
\begin{align}
&\l |\pi^{m_{\check s}}_{\check s}\l (f_{\mu_{\check s}}(\cdot, Y_{\check s}, \check s)\r) - \pi^{m^\e_{\check s}}_{\check s} \l (f_{\mu_{s}}(\cdot, Y^\e_{\check s}, \check s)\r ) \r |\notag\\
&+\l | \pi^{m_{\tau_n}}_{\tau_n}\l (c_{{\xi_{n-1}}{\xi_{n}}}(\cdot, Y_{\tau_n},\tau_n)\r) - \pi^{m^\e_{\tau_n}}_{\tau_n} \l (c_{{\xi_{n-1}}{\xi_{n}}}(\cdot, Y^\e_{\tau_n}, \tau_n)\r)  \r|\notag\\
&\leq c\E^{m_{t_k},Y_{t_k},t_k} \bigg [ |m_t - m^\e_t | +  |Y_t - Y^\e_t |  \biggr ] \notag
\end{align}
for all $s\in[t_k,T]$ and for all $\tau_n$, $t_k\leq \tau_n \leq T$. Hence, using the assumption in \eqref{eq:projection}, and Lemma \ref{prop:finitenumberofswitches},  we can conclude that
\begin{align}
|v_i^{\check\Pi}(m_{t_k},Y_{t_k},t_k)- v_i^{\check\Pi,\e}(m_{t_k},Y_{t_k},t_k)|\leq c\e, \notag
\end{align}
and hence the proof of the proposition is complete.
\end{proof}

\subsection{Step 4: Controlling $E_4$} 

\begin{proposition}\label{prop:regression}There exists a  constant $C_4$, independent of $\delta$ and  $\Delta $, such that
\begin{eqnarray*}
|E_4|=|v_i^{\check\Pi,\e}(m_{t_k},Y_{t_k},t_k)-\hat v_i^{\check\Pi,\e}(m_{t_k},Y_{t_k},t_k)| \leq C_{4} \notag \frac{\Delta}{\delta}
\end{eqnarray*}
whenever $(i,m_{t_k},Y_{t_k},t_k) \in \Q \times \R^{N_1} \times \R^{N_2} \times \Pi$.
\end{proposition}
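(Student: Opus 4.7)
The plan is backward induction on $k$ applied to the quantity
$$\Delta_k := \sup_{i\in\Q}\sup_{(m,y)\in D^\e} \bigl|v_i^{\check\Pi,\e}(m,y,t_k) - \hat v_i^{\check\Pi,\e}(m,y,t_k)\bigr|,$$
with the base case $\Delta_N = 0$ immediate from the terminal conditions in \eqref{eq:scheme} and \eqref{eq:scheme2}. For the inductive step I compare the two DPP recursions, apply $|\max_j a_j - \max_j b_j|\le \max_j|a_j-b_j|$, and exploit the cancellation of the common $\delta\pi_{t_k}^{m}(f_j(\cdot,y,t_k))$ and $\pi_{t_k}^{m}(c_{i,j}(\cdot,y,t_k))$ terms to reduce the problem to estimating
$$\max_{j\in\Q}\sup_{(m,y)\in D^\e}\bigl|\E^{m,y,t_k}\bigl[v_j^{\check\Pi,\e}(m_{t_{k+1}},Y_{t_{k+1}},t_{k+1})\bigr] - \hat F^{m,y,t_k}(\hat v_j^{\check\Pi,\e})\bigr|.$$

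I split this difference using the triangle inequality into
$$\bigl|\hat F^{m,y,t_k}(v_j^{\check\Pi,\e}-\hat v_j^{\check\Pi,\e})\bigr| + \bigl|\hat F^{m,y,t_k}(v_j^{\check\Pi,\e}) - \E^{m,y,t_k}\bigl[v_j^{\check\Pi,\e}(m_{t_{k+1}},Y_{t_{k+1}},t_{k+1})\bigr]\bigr|.$$
Formula \eqref{eq:truemean} shows that, for $(m,y)\in B_r$, $\hat F^{m,y,t_k}(\cdot)$ is nothing more than a conditional expectation over the hypercube $B_r$; hence it is an averaging operator of operator-norm at most one on $L^\infty$, so the first term is bounded by $\Delta_{k+1}$. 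For the second term, set $h(m',y'):=\E^{m',y',t_k}[v_j^{\check\Pi,\e}(m_{t_{k+1}},Y_{t_{k+1}},t_{k+1})]$ and use the Markov property of the projected Euler process $(m^\e_t,Y^\e_t)$ to write
$$\hat F^{m,y,t_k}(v_j^{\check\Pi,\e}) = \E\bigl[h(m_{t_k}^\e,Y_{t_k}^\e)\,\big|\,(m_{t_k}^\e,Y_{t_k}^\e)\in B_r\bigr],\qquad \E^{m,y,t_k}\bigl[v_j^{\check\Pi,\e}(m_{t_{k+1}},Y_{t_{k+1}},t_{k+1})\bigr] = h(m,y).$$

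Lemma \ref{lemma:Lipschitzm} and Lemma \ref{lemma:Lipschitzy} provide uniform-in-$\delta$ Lipschitz constants $C_m, C_Y$ for $v_j^{\check\Pi,\e}$, and the one-step Euler maps are Lipschitz in the initial data $(m,y)$ with constant at most $1+L\delta$ (the one-step increment of $\bar Y$ and the $\bar X$-evolution do not see the initial value of $Y$, and projection onto the convex set $D^\e$ is $1$-Lipschitz). Hence $h$ is Lipschitz in $(m,y)$ with a constant $L_h$ independent of $\delta$, $\e$, and $k$. Since $B_r$ has diameter $\sqrt{N_1+N_2}\,\Delta$, this yields
$$\bigl|\hat F^{m,y,t_k}(v_j^{\check\Pi,\e}) - h(m,y)\bigr|\le L_h\,\E\bigl[|m-m_{t_k}^\e|+|y-Y_{t_k}^\e|\,\big|\,(m_{t_k}^\e,Y_{t_k}^\e)\in B_r\bigr]\le C\Delta,$$
and altogether $\Delta_k\le \Delta_{k+1} + C\Delta$. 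Iterating from $k=N$ down to $k=0$ gives $\Delta_0\le NC\Delta = TC\Delta/\delta$, which is the claimed bound upon setting $C_4:=TC$. The main obstacle, and the reason Lemma \ref{lemma:Lipschitzm} and Lemma \ref{lemma:Lipschitzy} (and hence the stronger structural assumption \eqref{ass:switchingcosts+}) are invoked here, is precisely the uniform-in-$\delta$ Lipschitz control of $v_j^{\check\Pi,\e}$; without it the Lipschitz constant $L_h$ could grow in $k$ and the telescoping iteration would fail.
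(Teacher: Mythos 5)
Your proposal is correct and follows essentially the same route as the paper: backward induction with the identical decomposition into the regression-approximation error $|\hat F^{m,y,t_k}(v_j^{\check\Pi,\e})-F^{m,y,t_k}(v_j^{\check\Pi,\e})|$ (controlled via the Lipschitz bounds of Lemma \ref{lemma:Lipschitzm} and Lemma \ref{lemma:Lipschitzy} together with the hypercube diameter $O(\Delta)$) and the propagated error $|\hat F^{m,y,t_k}(\hat v_j^{\check\Pi,\e})-\hat F^{m,y,t_k}(v_j^{\check\Pi,\e})|\le E_{4,k+1}$ (controlled because $\hat F$ is a conditional average, hence an $L^\infty$-contraction). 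The only cosmetic difference is that you prove the first estimate directly from the Markov property and the Lipschitz continuity of the one-step Euler/projection maps, where the paper cites Lemma 3.4 of \cite{ACLP12} for the same bound.
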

\begin{proof}
Recall that $\Pi=\Pi^{\delta}=\{0=t_0<t_1<\cdots<t_N=T\}$ with $|t_k-t_{k-1}|=\delta$. We claim that there exist positive constants $A$ and $L$, independent of $\delta$ and  $\Delta$,  such that the following holds. There exists a sequence of constants $\{E_{4,k}\}_{k=0}^N$ such that
\begin{eqnarray}\label{Iprin}
|v_i^{\check\Pi,\e}(m_{t_k},Y_{t_k},t_k)-\hat v_i^{\check\Pi,\e}(m_{t_k},Y_{t_k},t_k)| \leq  E_{4,k}
\end{eqnarray}
whenever $(i,m_1,Y_{t_k},t_k), (i,m_2,Y_{t_k},t_k) \in \Q \times \R^{N_1} \times \R^{N_2} \times \Pi$ and such that
\begin{align}\label{Iprin+}
E_{4,N}&=0\notag\\
E_{4,k}&=A\Delta(1+L\delta)+E_{4,k+1}, \hspace{1cm} 0 \leq k\leq N-1.
\end{align}
We prove \eqref{Iprin}, \eqref{Iprin+} by (backward) induction on $k$ and we say that $I(k)=1$ if \eqref{Iprin} holds with a constant  $E_{4,k}$ whenever $(i,m_{t_k},Y_{t_k},t_k) \in \Q \times \R^{N_1} \times \R^{N_2} \times \Pi$, and if $E_{4,k}$ is related to the bounds $E_{4,k+1}$,\dots, $E_{4,N}$ as stated in
\eqref{Iprin+}.  We want to prove that $I(k)=1$ whenever $k\in\{0,...,N\}$. Since
 $|v_i^{ \check \Pi,\e}(m_T, Y_T,T)-\hat v_i^{ \check \Pi,\e}(m_T, Y_T,T)|=0$ we immediately see that $I(N)=1$. Assuming that
 $I(k+1)=1$ for some $k\in[0,N-1]$ we next prove that $I(k)=1$ by construct $E_{4,k}$. Note that $A$ and $L$ are degrees of freedom appearing in the  following argument and the important thing is, in particular, that $A$ and $L$ do not depend on $k$. To start the proof, recall the conditional expectation $F$ defined in \eqref{eq:regressionsimplenotation} and the approximate conditional expectation $\hat F$  defined in \eqref{eq:Fhat}. The recursive scheme \eqref{eq:scheme2}  can be written as
\begin{align} 
\hat v_i^{\check\Pi,\e}(m_{t_k},Y_{t_k},t_k)
=\max_{j \in \Q}& \bigg \{\delta \pi^{m_{t_{k}}}_{t_k}(f_{j}(\cdot, Y_{t_k},t_k))+ \hat F^{m_{t_{k}},Y_{t_{k}},t_{k}}(\hat v_j^{\check\Pi,\e}) -\pi^{m_{t_{k}}}_{t_k}(c_{i,j}(\cdot, Y_{t_k},t_k))\bigg \}\notag\\
=\max_{j \in \Q}& \bigg \{\delta \pi^{m_{t_{k}}}_{t_k}(f_{j}(\cdot, Y_{t_k},t_k)) + F^{m_{t_{k}},Y_{t_{k}},t_{k}}(v_j^{\check\Pi,\e}) -  \pi^{m_{t_{k}}}_{t_k}(c_{i,j}(\cdot, Y_{t_k},t_k))\notag\\
&+\hat F^{m_{t_{k}},Y_{t_{k}},t_{k}}(v_j^{\check\Pi,\e}) - F^{m_{t_{k}},Y_{t_{k}},t_{k}}(v_j^{\check\Pi,\e})\notag\\
&+\hat F^{m_{t_{k}},Y_{t_{k}},t_{k}}(\hat v_j^{\check\Pi,\e}) - \hat F^{m_{t_{k}},Y_{t_{k}},t_{k}}(v_j^{\check\Pi,\e})\bigg\}. \notag
\end{align}
Hence,
\begin{align} \label{eq:vvhatdiff}
\hat v_i^{\check\Pi,\e}(m_{t_k},Y_{t_k},t_k) &\leq v_i^{\check\Pi,\e}(m_{t_k},Y_{t_k},t_k)\notag\\
&+\max_{j \in \Q} \bigg \{|\hat F^{m_{t_{k}},Y_{t_{k}},t_{k}}(v_j^{\check\Pi,\e}) - F^{m_{t_{k}},Y_{t_{k}},t_{k}}(v_j^{\check\Pi,\e})|\biggr \} \notag \\
&+\max_{j \in \Q} \bigg \{|\hat F^{m_{t_{k}},Y_{t_{k}},t_{k}}(\hat v_j^{\check\Pi,\e}) - \hat F^{m_{t_{k}},Y_{t_{k}},t_{k}}(v_j^{\check\Pi,\e})\bigg \}. 
\end{align}
Using Lemma \ref{lemma:Lipschitzm} and Lemma \ref{lemma:Lipschitzy}, we know that
$$|v_i^{\check\Pi,\e}(m_1,y_1,t_k)-v_i^{\check\Pi,\e}(m_2,y_2,t_k)|\leq (C_{m}+C_{Y})(|m_1-m_2|+|y_1-y_2|)$$
whenever $(i,m_1,y_1,t_k), (i,m_1,y_1,t_k) \in \Q \times \R^{N_1} \times \R^{N_2} \times \Pi$. In particular,
 $v_i^{\check\Pi,\e}(m_t,Y_t,t_k)$ is Lipschitz continuous w.r.t. the $(N_1 + N_2)$-dimensional process $(m_t,Y_t)$ and with Lipschitz constant independent of $k$. Now, using this and Lemma 3.4 in \cite{ACLP12} we can conclude that there exist constants $L$ and $\tilde C_{4,k+1}$, independent of $\delta$, $\Delta $, and $k$,  such that
\begin{equation*}
|\hat F^{m_{t_{k}},y_{t_{k}},t_{k}}(v_j^{\check\Pi,\e}) - F^{m_{t_{k}},y_{t_{k}},t_{k}}(v_j^{\check\Pi,\e})|
\leq \tilde C_{4,k+1}\Delta(1+L\delta). \notag
\end{equation*}
Essentially $\tilde C_{4,k+1}$ is the Lipschitz constant of $v_i^{\check\Pi,\e}(\cdot,\cdot,t_{k+1})$, i.e., $(C_{m}+C_{Y})$. In particular,
\begin{equation}   \label{eq:controlofFandFhat+}
|\hat F^{m_{t_{k}},y_{t_{k}},t_{k}}(v_j^{\check\Pi,\e}) - F^{m_{t_{k}},y_{t_{k}},t_{k}}(v_j^{\check\Pi,\e})|
\leq A\Delta(1+L\delta)  
\end{equation}
where $A:=c (C_{m}+C_{Y})$ for some harmless constant $c$. Note that $A$ and $L$ are now fixed and, in particular, independent of $k$. Next, by the definition of $\hat F$, see \eqref{eq:Fhat} and \eqref{eq:truemean},
\begin{align}
&|\hat F^{m_{t_{k}},Y_{t_{k}},t_{k}}(\hat v_j^{\check\Pi,\e}) - \hat F^{m_{t_{k}},Y_{t_{k}},t_{k}}(v_j^{\check\Pi,\e})|\notag\\
\leq&\mathbb{E}\biggl [|\hat v_j^{\check\Pi,\e}(m_{t_{k+1}},Y_{t_{k+1}},t_{k+1})-v_j^{\check\Pi,\e}(m_{t_{k+1}},Y_{t_{k+1}},t_{k+1})|\big|(m_{t_k},Y_{t_k})\in B_{r}\biggr ] .\notag
\end{align}
Hence, using this, and the induction hypothesis $I(k+1)=1$, we can conclude that
\begin{equation} \label{eq:controlofFhatandFhata}
|\hat F^{m_{t_{k}},y_{t_{k}},t_{k}}(\hat v_j^{\check\Pi,\e}) - \hat F^{m_{t_{k}},y_{t_{k}},t_{k}}(v_j^{\check\Pi,\e})|\leq E_{4,k+1}.
\end{equation}
Combining \eqref{eq:vvhatdiff}, \eqref{eq:controlofFandFhat+} and \eqref{eq:controlofFhatandFhata}, we see that
$$
\hat v_i^{\check\Pi,\e}(m_{t_k},Y_{t_k},t_k) - v_i^{\check\Pi,\e}(m_{t_k},Y_{t_k},t_k)\leq A\Delta(1+L\delta)+E_{4,k+1}.
$$
By symmetry the same inequality holds for $\hat v_i^{\check\Pi,\e}(m_{t_k},Y_{t_k},t_k) - v_i^{\check\Pi,\e}(m_{t_k},Y_{t_k},t_k)$ and thus
$$
\left|\hat v_i^{\check\Pi,\e}(m_{t_k},Y_{t_k},t_k) - v_i^{\check\Pi,\e}(m_{t_k},Y_{t_k},t_k)\right|\leq E_{4,k},
$$
with $E_{4,k}$ defined as
\begin{equation*}
E_{4,k}:=A\Delta(1+L\delta)+E_{4,k+1}. 
\end{equation*}
In particular, \eqref{Iprin} and \eqref{Iprin+} hold for $k$ and we have proved that if $I(k+1)=1$ for some $k\in\{0,\dots,N-1\}$, then also $I(k)=1$. Hence
$I(k)=1$ for all $k\in\{0,\dots,N\}$ by induction. Based on \eqref{Iprin} and \eqref{Iprin+} we complete the proof of the proposition by observing that, for any $k\in\{0,\dots,N-1\}$,
$$
E_{4,k} \leq \Delta(1+L\delta)\sum_{l=k+1}^N A\leq c \frac{\Delta}{\delta}.
$$
\end{proof}

\subsection{Step 5: Controlling $E_5$} 
\begin{proposition}\label{lemma:truetosample} There exist a constant $C_{51}$,  independent of $\delta$, $\Delta$, $\e$ and $M$, and a constant $C_{52}$,
 independent of $\delta$, $\Delta$, and $M$, such that
\begin{eqnarray*}
&&\l \|\hat v_i^{\check\Pi,\e}(m_{t_k},Y_{t_k},t_k)-\tilde  v_i^{\check\Pi,\e,M}(m_{t_k},Y_{t_k},t_k) \r \|_{L^2}\notag\\ &&\leq\frac{C_{51}(1+C_{52})}{\delta\sqrt{Mp_{min}(T,\Delta,\e)}}\left(1+\frac{1}{\sqrt{Mp_{min}(T,\Delta,\e)}}\right),
\end{eqnarray*}
whenever $(i,m_{t_k},Y_{t_k},t_k) \in \Q \times \R^{N_1} \times \R^{N_2} \times \Pi$. Here 
\begin{equation*}
p_{min}(T,\Delta,\e):=\min_{t\in\Pi^\delta\cap[0,T]}\min_{B_r\subset D^\e}\mathbb P\biggl [(m_{t},Y_t)\in B_r\biggr ]
\end{equation*}
is a strictly positive quantity.
\end{proposition}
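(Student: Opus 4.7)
The plan is backward induction on $k$, paralleling the strategy used in the proof of Proposition~\ref{prop:regression}. The base case $k=N$ is trivial since both $\hat v_i^{\check\Pi,\e}(m_T,Y_T,T)$ and $\tilde v_i^{\check\Pi,\e,M}(m_T,Y_T,T)$ vanish by definition. For the inductive step, I would apply $|\max_j a_j-\max_j b_j|\leq \max_j|a_j-b_j|$ to the recursions \eqref{eq:scheme2} and \eqref{eq:scheme3}; since the payoff and switching cost terms coincide, this gives pointwise
\begin{align*}
\bigl|\hat v_i^{\check\Pi,\e}-\tilde v_i^{\check\Pi,\e,M}\bigr|(m_{t_k},Y_{t_k},t_k)\leq \max_{j\in\Q}\bigl|\hat F^{m_{t_k},Y_{t_k},t_k}(\hat v_j^{\check\Pi,\e})-\tilde F_M^{m_{t_k},Y_{t_k},t_k}(\tilde v_j^{\check\Pi,\e,M})\bigr|.
\end{align*}
I would then add and subtract $\tilde F_M^{m_{t_k},Y_{t_k},t_k}(\hat v_j^{\check\Pi,\e})$: the resulting \emph{propagation term} $|\tilde F_M(\hat v_j)-\tilde F_M(\tilde v_j)|$ is dominated pathwise by $\tilde F_M(|\hat v_j-\tilde v_j|)$, since $\tilde F_M$ acts as a positive weighted average with weights in $[0,1]$, and hence transports the induction hypothesis at step $k+1$ in $L^2$.

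The \emph{statistical term} $|\hat F(\hat v_j)-\tilde F_M(\hat v_j)|$ is where the structural work lies. Using \eqref{eq:truemean}, set $A_r=\E[\hat v_j^{\check\Pi,\e}(m_{t_{k+1}},Y_{t_{k+1}},t_{k+1})\I_{\{(m_{t_k},Y_{t_k})\in B_r\}}]$ and $B_r^{\P}=\P((m_{t_k},Y_{t_k})\in B_r)$, with empirical counterparts $\hat A_r,\hat B_r$. The algebraic identity
\begin{align*}
\frac{A_r}{B_r^\P}-\frac{\hat A_r}{\hat B_r}=\frac{A_r-\hat A_r}{B_r^\P}+\frac{\hat A_r}{\hat B_r}\cdot\frac{B_r^\P-\hat B_r}{B_r^\P}
\end{align*}
reduces the problem to variance estimates on sample means of bounded i.i.d.\ random variables. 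Using a uniform $L^\infty$-bound $\|\hat v_j^{\check\Pi,\e}\|_\infty\leq C_{52}$ (depending only on $T$, $\sup_i\|f_i\|_\infty$ and $\sup_{i,j}\|c_{i,j}\|_\infty$, hence independent of $\delta,\Delta,M$), I would obtain $\|A_r-\hat A_r\|_{L^2}\leq cC_{52}/\sqrt{M}$ and $\|B_r^\P-\hat B_r\|_{L^2}\leq c/\sqrt{M}$. Dividing by $B_r^\P\geq p_{min}(T,\Delta,\e)$ yields the first-order scale $(1+C_{52})/\sqrt{Mp_{min}}$; the second summand contributes the refinement $(1+C_{52})/(Mp_{min})$ via Cauchy--Schwartz together with $|\hat A_r/\hat B_r|\leq C_{52}$. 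Put together this produces a per-step error of order $C_{51}(1+C_{52})(1+1/\sqrt{Mp_{min}})/\sqrt{Mp_{min}}$, and iterating the recursion $E_{5,k}\leq E_{5,k+1}+(\text{per-step error})$ over the $N=T/\delta$ levels delivers the announced $1/\delta$ prefactor.

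The main obstacle I anticipate is the careful handling of the event $\{\hat B_r=0\}$, where the convention $\tilde\lambda^M_{t_k,r}=0$ introduces a deterministic bias not captured by the algebraic decomposition above. I would absorb this via the concentration estimate $\P(\hat B_r=0)\leq (1-p_{min})^M\leq e^{-Mp_{min}}$, whose contribution, multiplied by the uniform $L^\infty$ bound $C_{52}$, is dominated by the scales already established. A further minor technicality is the passage from the pointwise bound to the $L^2$-bound across the finite index set $\Q$, which is routine via $(\max_j a_j)^2\leq \sum_j a_j^2$. Once these points are settled, the remainder of the argument is an adaptation to the partial-information framework, with the pair $(m_{t_k},Y_{t_k})$ playing the role of the underlying state, of the regression-error techniques developed in \cite{ACLP12} for the corresponding full-information problem.
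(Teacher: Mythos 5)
Your proposal follows essentially the same route as the paper's proof: backward induction on $k$ with the same splitting of the one-step error into a statistical term $|\hat F(\hat v_j)-\tilde F_M(\hat v_j)|$ and a propagation term $|\tilde F_M(\hat v_j)-\tilde F_M(\tilde v_j)|$, the latter absorbed into the induction hypothesis via the weighted-average structure of $\tilde F_M$, and the $1/\delta$ prefactor obtained by summing the per-step error over the $N=T/\delta$ levels. The only difference is that where you sketch a direct ratio-decomposition and variance argument for the statistical term (including the treatment of the empty-cell event $\{\hat B_r=0\}$), the paper simply invokes Lemma 3.6 of \cite{ACLP12}, which encapsulates exactly those estimates.
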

\begin{proof} $p_{min}(T,\Delta,\e)$ is strictly positive quantity due to the fact that the domain $D^\e$ is bounded. To prove the proposition we claim that there exist a positive constant $A$, independent of $\delta$, $\Delta$, $\e$ and $M$, and a constant $B$,
 independent of $\delta$, $\Delta$, and $M$, such that the following holds. There exists a sequence of constants $\{E_{5,k}\}_{k=0}^N$ such that
\begin{eqnarray}\label{Iprina}
\l \|\hat v_i^{\check\Pi,\e}(m_{t_k},Y_{t_k},t_k)-\tilde  v_i^{\check\Pi,\e,M}(m_{t_k},Y_{t_k},t_k) \r \|_{L^2} \leq  E_{5,k}
\end{eqnarray}
whenever $(i,m_1,Y_{t_k},t_k), (i,m_2,Y_{t_k},t_k) \in \Q \times \R^{N_1} \times \R^{N_2} \times \Pi$ and such that
\begin{align}\label{Iprin+a}
E_{5,N}&=0\notag\\
E_{5,k}&=\frac{A(1+B)}{\sqrt{Mp_{min}(T,\Delta,\e)}}\left(1+\frac{1}{\sqrt{Mp_{min}(T,\Delta,\e)}}\right)+E_{4,k+1},\ 0 \leq k\leq N-1.
\end{align}
We prove \eqref{Iprina},\eqref{Iprin+a} by (backward) induction on $k$ and we say that $I(k)=1$ if \eqref{Iprina} holds with a constant  $E_{5,k}$ whenever $(i,m_{t_k},Y_{t_k},t_k) \in \Q \times \R^{N_1} \times \R^{N_2} \times \Pi$, and if $E_{5,k}$ is related to the bounds $E_{5,k+1}$,\dots, $E_{5,N}$, as stated in
\eqref{Iprin+a}.  We want to prove that $I(k)=1$ whenever $k\in\{0,\dots,N\}$. Again, we immediately see that $I(N)=1$. Assuming that
 $I(k+1)=1$ for some $k\in\{0, \dots, N-1\}$ we next prove that $I(k)=1$ by constructing $E_{5,k}$. Note that $A$ is a degree of freedom appearing in the following argument and the important thing is, in particular, that $A$ does not depend on $k$. To start the proof, given $i\in\Q$ we in the following let ${\tilde i}^\ast$ 
and $\hat i^\ast$ 
be such that (see \eqref{eq:scheme2} and 
\eqref{eq:scheme3})
\begin{align}
\tilde v_i^{\check\Pi,\e,M}(m_{t_k},Y_{t_k},t_k)=&\delta \pi^{m_{t_{k}}}_{t_k}(f_{\tilde i ^\ast}(\cdot, Y_{t_k},t_k)) + \tilde F_M^{m_{t_k},Y_{t_k},t_k}(\tilde
v_{\tilde i^*}^{\check\Pi,\e,M}) -  \pi^{m_{t_{k}}}_{t_k}(c_{i,\tilde i ^\ast}(\cdot, Y_{t_k},t_k)) \notag \\
\hat v_i^{\check\Pi,\e}(m_{t_k},Y_{t_k},t_k)=&\delta \pi^{m_{t_{k}}}_{t_k}(f_{\hat i ^\ast}(\cdot, Y_{t_k},t_k))  + \hat F^{m_{t_k},Y_{t_k},t_k}(\hat
v_{\hat j^*}^{\check\Pi,\e}) -  \pi^{m_{t_{k}}}_{t_k}(c_{i,\hat i ^\ast}(\cdot, Y_{t_k},t_k)) .\notag
\end{align}
Note that ${\tilde i}^\ast$ and $\hat i^\ast$ are not neccessarily equal, at time $t_k$, for $(m_{t_k}, Y_{t_k})$. Hence, by proceeding as in the proof of Proposition \ref{prop:regression}, we see that
\begin{align}
\tilde v_i^{\check\Pi,\e,M}(m_{t_k},Y_{t_k},t_k)=&\delta \pi^{m_{t_{k}}}_{t_k}(f_{\tilde i^\ast}(\cdot, Y_{t_k},t_k))  + \tilde F^{m_{t_k},Y_{t_k},t_k}_M(\tilde
v_{\tilde i^*}^{\check\Pi,\e,M}) -  \pi^{m_{t_{k}}}_{t_k}(c_{i,\tilde i ^\ast}(\cdot, Y_{t_k},t_k)) \notag\\
=&\Big\{\delta \pi^{m_{t_{k}}}_{t_k}(f_{{\tilde i}^\ast}(\cdot, Y_{t_k},t_k)) + \hat F^{m_{t_k},Y_{t_k},t_k}(\hat
v_{{\tilde i}^\ast}^{\check\Pi,\e}) -  \pi^{m_{t_{k}}}_{t_k}(c_{i,{\tilde i}^\ast}(\cdot, Y_{t_k},t_k))\notag\\
&+\tilde F^{m_{t_k},Y_{t_k},t_k}_M(\hat
v_{{\tilde i}^\ast}^{\check\Pi,\e})-\hat F^{m_{t_k},Y_{t_k},t_k}(\hat
v_{{\tilde i}^\ast}^{\check\Pi,\e})\notag\\
&+\tilde F^{m_{t_k},Y_{t_k},t_k}_M(\tilde
v_{{\tilde i}^\ast}^{\check\Pi,\e,M})-\tilde F^{m_{t_k},Y_{t_k},t_k}_M(\hat
v_{\hat i^*}^{\check\Pi,\e})\Big\}\notag\\
\leq&\hat v_i^{\check\Pi,\e}(m_{t_k},Y_{t_k},t_k)+\max_{j\in{\Q}}\left|\tilde F^{m_{t_k},Y_{t_k},t_k}_M(\hat v_{j}^{\check\Pi,\e})-\hat F^{m_{t_k},Y_{t_k},t_k}(\hat v_{j}^{\check\Pi,\e})\right|\notag\\
&\qquad\qquad\qquad\quad+\max_{j\in{\Q}}\left|\tilde F^{m_{t_k},Y_{t_k},t_k}_M(\tilde v_{j}^{\check\Pi,\e,M})-\tilde F_M^{m_{t_k},Y_{t_k},t_k}(\hat
v_{j}^{\check\Pi,\e})\right|\notag.
\end{align}
In particular, by symmetry we can use this inequality to conclude that
\begin{align} 
\left|\hat v_i^{\check\Pi,\e}(m_{t_k},Y_{t_k},t_k)-\tilde  v_i^{\check\Pi,\e,M}(m_{t_k},Y_{t_k},t_k)\right| \leq E_{51,k}^{\check\Pi,\e,M} + E_{52,k}^{\check\Pi,\e,M}, \notag
\end{align}
for all $i\in\Q$, where
\begin{align}
E_{51,k}^{\check\Pi,\e,M}&=\max_{j\in{\Q}}\left|\tilde F^{m_{t_k},Y_{t_k},t_k}_M(\hat v_{j}^{\check\Pi,\e})-\hat F^{m_{t_k},Y_{t_k},t_k}(\hat v_{j}^{\check\Pi,\e})\right|,\notag\\
E_{52,k}^{\check\Pi,\e,M}&=\max_{j\in{\Q}}\left|\tilde F^{m_{t_k},Y_{t_k},t_k}_M(\tilde v_{j}^{\check\Pi,\e,M})-\tilde F^{m_{t_k},Y_{t_k},t_k}_M(\hat
v_{j}^{\check\Pi,\e})\right|. \notag
\end{align}
In particular,
\begin{align}\label{eq:ineq0p}
&\left\|\hat v_i^{\check\Pi,\e}(m_{t_k},Y_{t_k},t_k)-\tilde  v_i^{\check\Pi,\e,M}(m_{t_k},Y_{t_k},t_k)\right\|_{L^2}\leq \left \| E_{51,k}^{\check\Pi,\e,M}\right\|_{L^2} +  \l \| E_{52,k}^{\check\Pi,\e,M}\right\| _{L^2},
\end{align}
and, hence, we need to find proper bounds for $\left \| E_{51,k}^{\check\Pi,\e,M}\right\|_{L^2}$ and $\left \| E_{52,k}^{\check\Pi,\e,M}\right\|_{L^2}$.
Firstly, to bound $\left \| E_{51,k}^{\check\Pi,\e,M}\right\|_{L^2}$ we can simply apply Lemma 3.6 of \cite{ACLP12} to conclude that
\begin{align}\label{eq:ineq1p}
\left \| E_{51,k}^{\check\Pi,\e,M}\right\|_{L^2} \leq \frac{\tilde A(1+\tilde B)}{\sqrt{Mp_{min}(T,\Delta,\e)}}\left(1+\frac{1}{\sqrt{Mp_{min}(T,\Delta,\e)}}\right),
\end{align}
where $\tilde A$ is independent of $\delta$, $\Delta$, $\e$ and $M$, while $\tilde B$ is independent of $\delta$, $\Delta$, and $M$. Furthermore, $\tilde A$ and $\tilde B$ are independent of $k$. Secondly, to bound $\left \| E_{52,k}^{\check\Pi,\e,M}\right\|_{L^2}$ we first note, by definition \eqref{eq:Ftilde}, that
\begin{align}
&\left|\tilde F_M^{m_{t_k},Y_{t_k},t_k}(\tilde v_{i}^{\check\Pi,\e,M})-\tilde F_M^{m_{t_k},Y_{t_k},t_k}(\hat v_{i}^{\check\Pi,\e})\right|=\sum_{r=1}^R\Gamma_{r,M}B_r(m_{t_k},Y_{t_k},t_k) \notag
\end{align}
where
\begin{align}
\Gamma_{r,M}= \frac{ \dfrac{1}{M}\sum_{\ell = 1}^{M} \biggl ( \tilde v_i^{\check\Pi,\e,M}(m_{t_{k+1}}^\ell,Y_{t_{k+1}}^\ell,t_{k+1}) -\hat v_i^{\check\Pi,\e}(m_{t_{k+1}}^\ell,Y_{t_{k+1}}^\ell,t_{k+1}) \biggr) \mathcal{I}_{   \{(m^\ell_{t_k},Y^\ell_{t_k}) \in B_{r} \}   }   } {\dfrac{1}{M} \sum_{\ell=1}^M \mathcal{I}_{   \{(m^\ell_{t_k},Y^\ell_{t_k}) \in B_{r} \}}}.  \notag
\end{align}
Hence, using the induction hypothesis $I(k+1)=1$ we see that
\begin{align}\label{eq:ineq2p}
\l \|  E_{52,k}^{\check\Pi,\e,M} \r \| _{L^2}\leq E_{5,k+1}.
\end{align}
We now let $A:=\tilde A$, $B:=(1+\tilde B)$ and define  $$E_{5,k}:=\frac{A(1+B)}{\sqrt{Mp_{min}(T,\Delta,\e)}}\left(1+\frac{1}{\sqrt{Mp_{min}(T,\Delta,\e)}}\right)+E_{5,k+1}.$$
Then, using  \eqref{eq:ineq0p}, \eqref{eq:ineq1p} and \eqref{eq:ineq2p} we conclude that
\begin{eqnarray*}
\l \|\hat v_i^{\check\Pi,\e}(m_{t_k},Y_{t_k},t_k)-\tilde  v_i^{\check\Pi,\e,M}(m_{t_k},Y_{t_k},t_k) \r \|_{L^2}\leq E_{5,k},
\end{eqnarray*}
whenever $(i,m_{t_k},Y_{t_k},t_k) \in \Q \times \R^{N_1} \times \R^{N_2} \times \Pi$. In particular, \eqref{Iprina} and \eqref{Iprin+a} hold for $k$ and we have proved that if $I(k+1)=1$ for some $k\in\{0,\dots,N-1\}$, then also $I(k)=1$. Hence
$I(k)=1$ for all $k\in\{0,\dots,N\}$ by induction. Based on \eqref{Iprina} and \eqref{Iprin+a} we complete the proof of  Proposition \ref{lemma:truetosample} by observing that, for any $k\in\{0,\dots,N-1\}$,
\begin{eqnarray*}
E_{5,k} &\leq& \sum_{l=k+1}^N \frac{A(1+B)}{\sqrt{Mp_{min}(T,\Delta,\e)}}\left(1+\frac{1}{\sqrt{Mp_{min}(T,\Delta,\e)}}\right)\notag\\
&\leq&T\frac{A(1+B)}{\delta\sqrt{Mp_{min}(T,\Delta,\e)}}\left(1+\frac{1}{\sqrt{Mp_{min}(T,\Delta,\e)}}\right).
\end{eqnarray*}
\end{proof}

\begin{remark}
As can be seen from Propositions \ref{lemma:bermudan} -- \ref{prop:regression}, errors $E_1$ to $E_4$ are obtained in the pathwise sense, because these approximations are constructed for each path of $(m_t,Y_t)$. However, recalling Subsection \ref{sec.sample_mean_approx}, at time $t_k$, the $\mathcal F^Y_{t_k}$-adapted $\hat F$, defined in \eqref{eq:Fhat}, is approximated by $\tilde F_M$ as in \eqref{eq:Ftilde}. The corresponding regression coefficient $\tilde\lambda_{t_k,r}^M$ is a Monte Carlo type approximation of $\hat\lambda_{t_k,r}$ constructed by using $M$ independent trajectories $(m_{t_k}^\ell,Y_{t_k}^\ell)_{\ell=1}^M$ of the process $(\bar m_{t_k},\bar Y_{t_k})$. Therefore, instead of in the pathwise sense, $E_5$  is controlled in the associated $L^2$-norm, at each $t_k$. In addition, since the sample mean approximations are done independently of each other, and backwards in time, the error at time $t_k$ is controlled by the sum of errors at later time steps. This is the statement of Proposition \ref{lemma:truetosample}.
\end{remark}

\subsection{The final proof of Theorem \ref{thm:convergence}} Combining the error bounds obtained in Propositions \ref{lemma:bermudan} -- \ref {lemma:truetosample} with $t_k=0$ proves Theorem \ref{thm:convergence}.

\setcounter{equation}{0} \setcounter{theorem}{0}
\section{A numerical example}
\noindent In this section we present a simplistic numerical example showing some of the features stemming from our set up with only partial information.
In particular, we consider our optimal switching problem under partial information with two modes, $i\in\Q=\{0,1\}$, and
we use the numerical scheme proposed and detailed in the previous sections to estimate $v_i(m_0, \theta_0,Y_0,0)$. To simplify the presentation, we will only focus on the value function starting in mode $1$, i.e., we will here only estimate  $v_1(m_{0},\theta_0,y_0,0)$. We
will in this section slightly abuse notation and let $v_1$ denote both the true value function $v_1$ as well as its approximation $ \tilde v_1^{\check\Pi,\e,M}$. The interpretation should be clear from the context. We consider constants parameters and, if nothing else is specified, the parameters laid forth in Table \ref{tab:parameters} are the ones used in the simulations.

\begin{table}
\begin{center}

    \begin{tabular}{ | l| l | r |l | l | r|}
    \hline
\textbf{Description} & \textbf{Symbol} &\textbf{Value} &\textbf{Description} & \textbf{Symbol} &\textbf{Value} \\ \hline
   Amount of information & G &  1 &\# paths & M  &5000 \\ \hline
Volatility of signal &  C & 1& Time horizon& T  & 1 \\ \hline
Drift of signal&   F & 0 &Time step& $\delta $ & $1/730$ \\ \hline
Cost of opening&   $c_{0,1}$  & 0.01& \# basis functions & R& 100  \\ \hline
Cost of closing & $c_{1,0}$ & $0.001$ & $X_0$  mean &$m_0$& 0  \\   \hline
Running payoff state 1&$f_1(x)$&  $x$ & $X_0$ variance & $\theta_0$ & 0 \\ \hline
 Running payoff state 2& $f_0(x)$ & 0 & $Y_0$ &$y_0$ & 0 \\
\hline
    \end{tabular}
\end{center}
\caption{Parameter values}
\label{tab:parameters}
\end{table}

We emphasize that although the following example may seem simplistic, the numerical scheme proposed can handle general Kalman-Bucy type partial information optimal switching problems. The reason for making a rather crude choice of parameters is twofold. Firstly, using only limited computational power, we wish to keep our numerical model simple to limit the computational time required. With the data presented in Table \ref{tab:parameters}, and with no effort to optimize the code, the average time for a simulation was about two minutes (per grid point) on a standard laptop PC, and it took around 24 hours to generate the figures in this section. Secondly, we think that the specific features of partial information are made more clear with few modes and constant parameters. 

\subsection{Comparison with a deterministic method}

Although the numerical scheme proposed here is shown to converge to the true value function \eqref{eq:valuefcnpartial}, the rate of convergence is not clear from the above. Therefore, to check the validity of our model with $5000$ paths, $100$ basis functions and $\delta = 1/730$, we compare the results produced by our numerical scheme with the results in \cite{O14}, where a deterministic method based on partial differential equations is used to solve
the  optimal switching problem under partial information. 
For the parameter values given in Table \ref{tab:parameters}, the relative error between the deterministic solution in \cite{O14} and the numerical scheme proposed here is less then $2.5 \%$, when calculating $v_1$, see Table \ref{tab:comparison}. This indicates that the number of simulated paths, time steps and basis functions considered in this example are large enough to give a reasonable estimate of the value function.

\begin{table}
\begin{center}

    \begin{tabular}{|l |r|r|r|r|}
    \hline
\textbf{$m_0$} & \textbf{PDE method (\cite{O14})} &\textbf{Simulation method} & \textbf{Error} & \textbf{Error} ($\%$ of $v_1$)  \\ \hline
-0.5 & 0.0627  & 0.0621 & 0.0006 & 0.96  \\ \hline
0     &  0.7897 & 0.8078& 0.0181& 2.3 \\ \hline
0.5  &  5.0351& 5.1019 & 0.0668& 1.3\\ \hline
    \end{tabular}
\end{center}
\caption{Comparison between PDE- and simulation methods for different $m_0$.}
\label{tab:comparison}
\end{table}


\subsection{The influence of information}
We now turn to study the dependence of the value function $v_1$ on the amount of available information, the latter being determined
through the function $G_t$ defining $h$, see \eqref{eq:KB}. Larger $G$ means more information while smaller $G$ means less information. As shown by Figures \ref{fig:mu0runs}, \ref{fig:surfaceplots} and \ref{fig:crosssections} (a), the value function increases as the amount of available information increases. The reason for this behaviour is intuitively clear. When the amount of information increases, it becomes easier for the controller to make the correct decision concerning when to open/close the facility, increasing the possible output of the facility. For comparison, consider the case with no switching costs, i.e., $c_{1,0}=c_{0,1}=0$. It is intuitively clear that the optimal strategy is then to open the facility as soon as the underlying $X_t$ is positive and to close as soon as it becomes negative. If the controller has full information, he knows exactly when the signal switches sign and can thus implement this strategy. However, when the observation of the signal $X_t$ is noisy, the possibility of making correct decisions decreases and, consequently, the value $v_1$ decreases. Note that the case $G=0$ yields $v_1(m_0,0,0,0)=m_0T$, since $G=0$ implies $$E[X_t \, \vline \,\F^Y_t]=E[X_t \, \vline \, \F^Y_0]=m_0=0, \forall t \in [0,T].$$ The limiting cases $G=\infty$ (full information) and $G=0$ (no information) give upper and lower bounds, respectively, for the value function $v_1$.

It can also be seen from Figure \ref{fig:crosssections}~(a) that, for fixed volatility $C$ of the underlying process, the volatility being determined through the function $C_t$ in \eqref{eq:KB}, the value function is concave as a function of $G$. When $G=0$, there is no information about the underlying signal in the observation. In other words, we only observe noise. As mentioned above, the more information about the underlying we know with certainty, the more valuable it is for the decision making. This phenomenon is even more significant when the proportion of the information contained in $Y$ is relatively small compared with the noise (i.e. when $G$ is small). This is because when $G$ is small, increasing $G$ increases the ratio between information and noise (in the observation) much faster than when $G$ i large. In other words, for small $G$ the value function should increase more rapidly.

As $G$ becomes larger, the percentage of information in $Y$ becomes larger, and our problem starts to resemble that of complete information ($G=\infty$). In this case, increasing $G$ by a small amount may not have a noticeable effect in the observation, i.e., the ratio between the underlying and the noise in the observation becomes stable when $G$ is large. This means that the observation $Y$ will not change as significantly as before when $G$ increases, leading to the flattening of the value function $v_1$ seen in Figure \ref{fig:crosssections}~(a).
%


\begin{figure}
    \centering
    \includegraphics[scale=0.5]{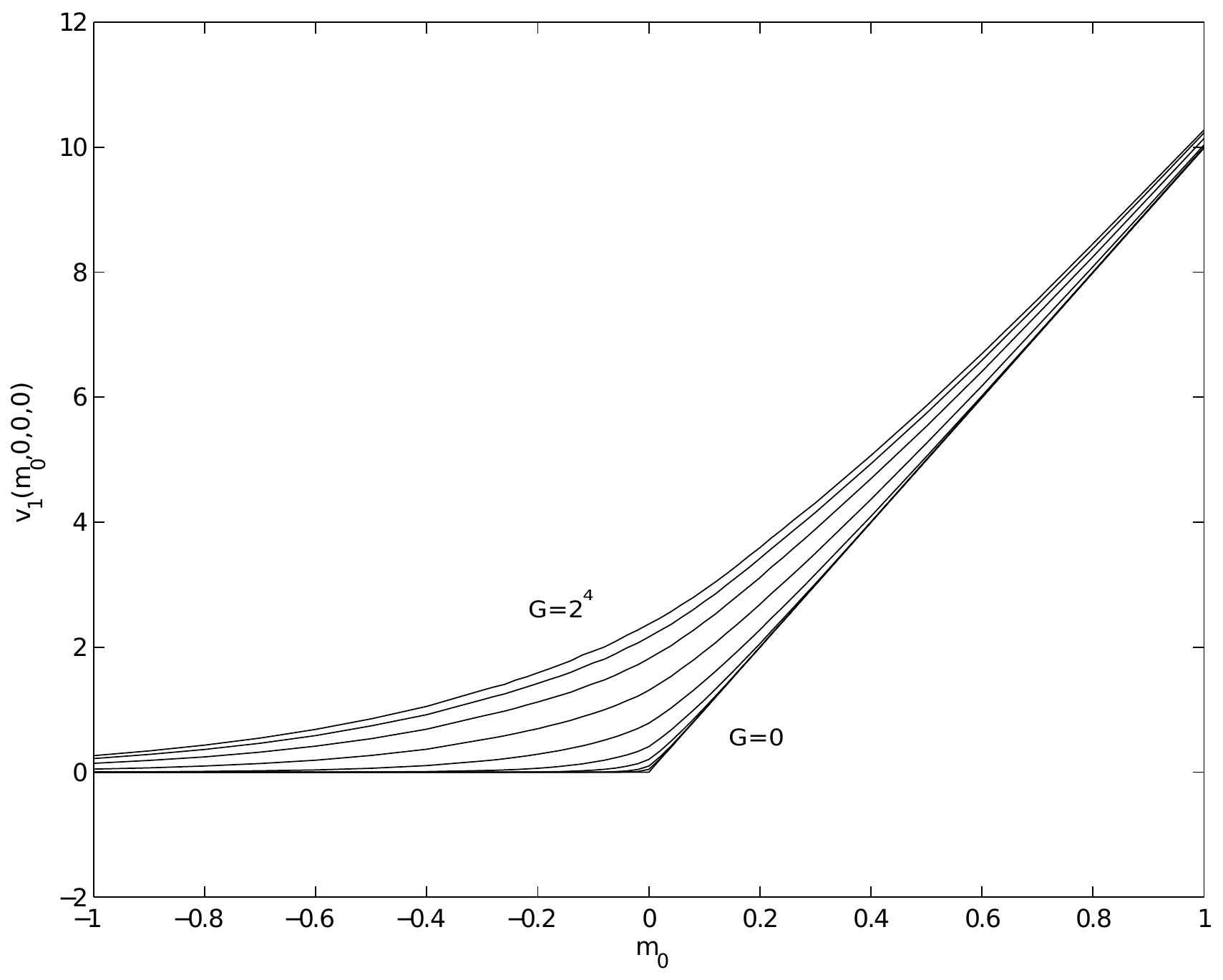}
    \caption{$v_1(m_0,0,0,0)$ for $G \in \{0, 2^{-4}, 2^{-3}, \dots, 2^{4}\}$.}
\label{fig:mu0runs}
\end{figure}

\setcounter{figure}{-1}
\begin{figure}
\centering
\begin{minipage}{.5\textwidth}
  \centering
    \includegraphics[width=.9\linewidth]{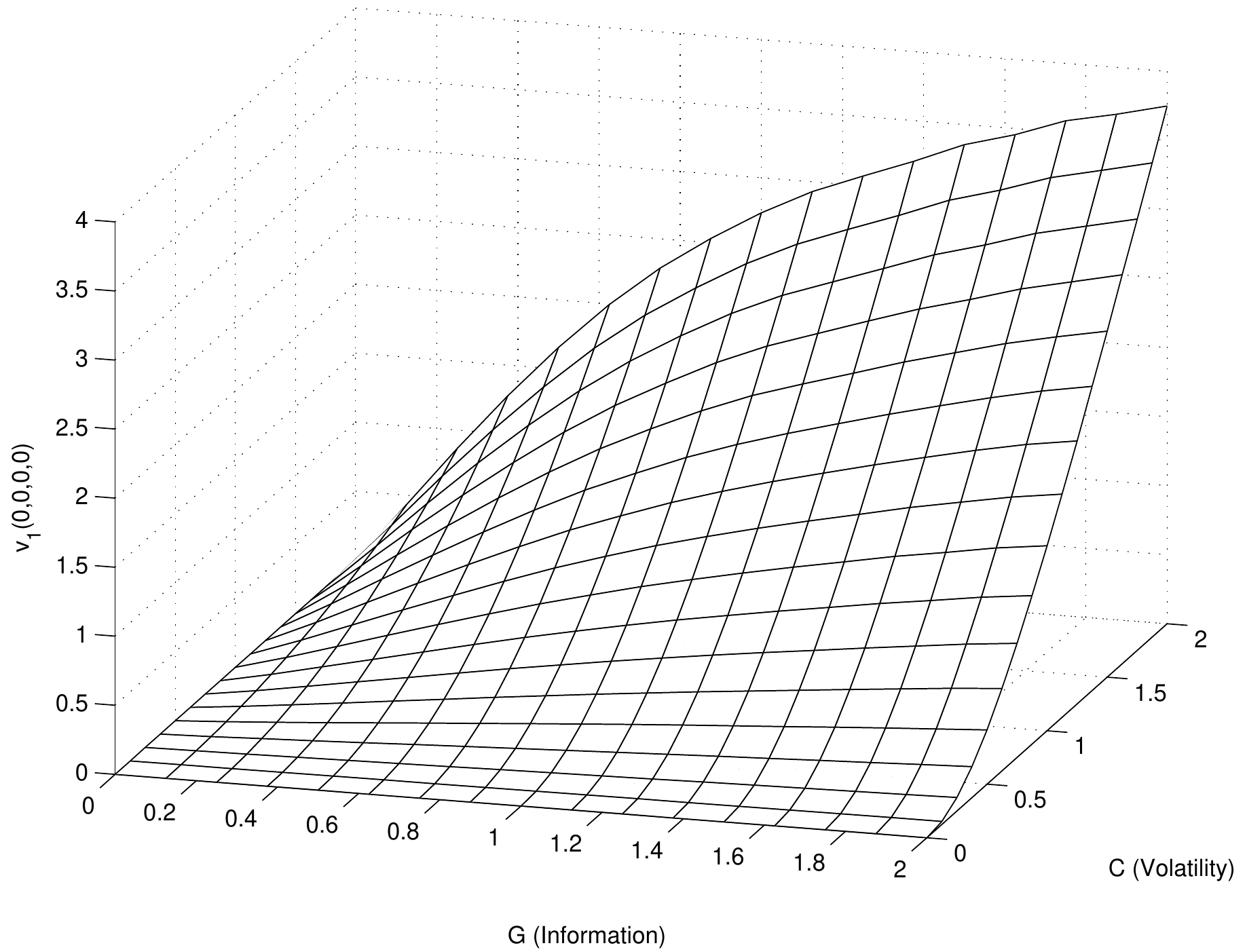}
      \caption{figure}{(a~)$F=0$ }
\end{minipage}%
\begin{minipage}{.5\textwidth}
\centering
 \includegraphics[width=.9\linewidth]{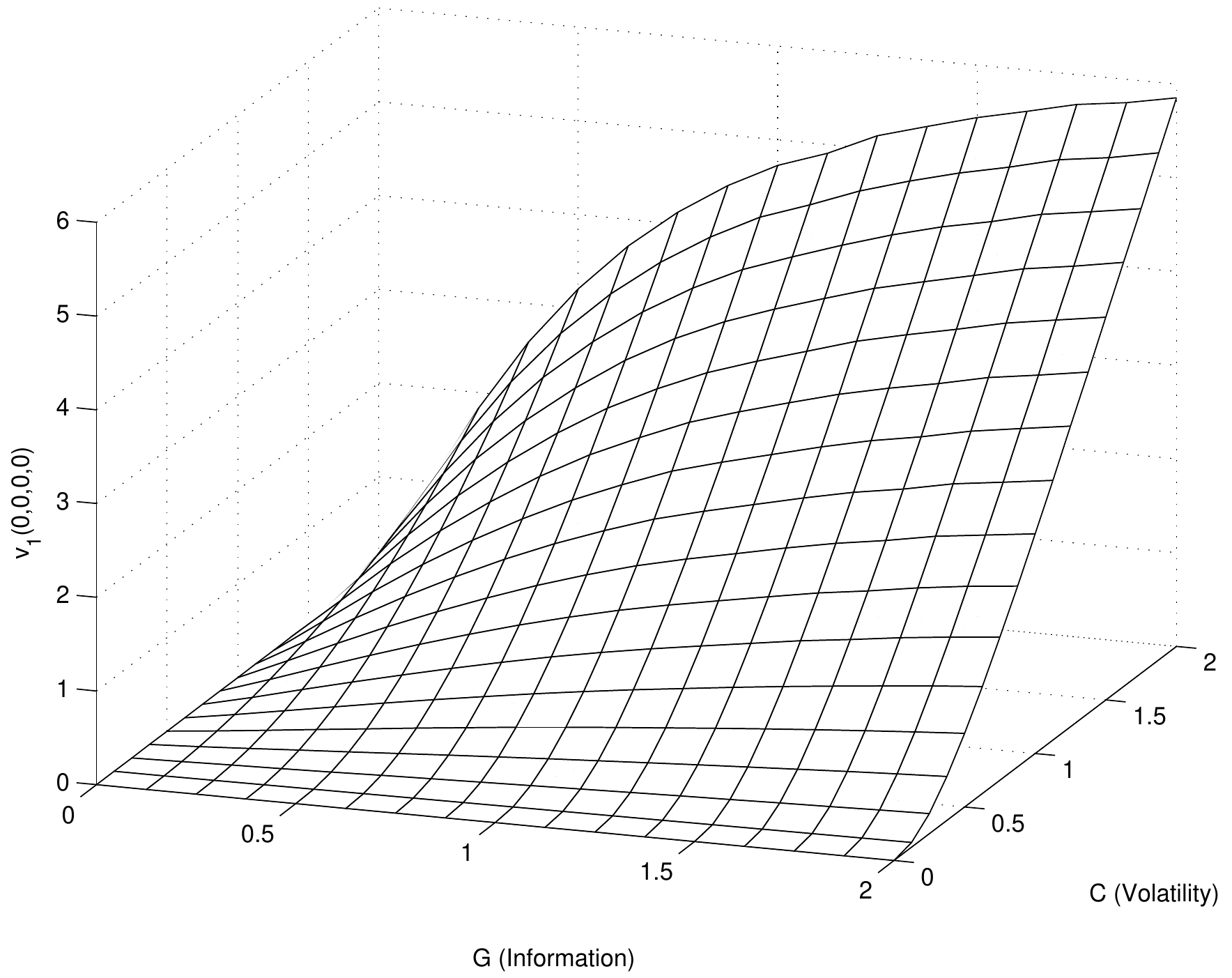}
    \caption{figure}{(b)~$F=1$ }
\end{minipage}
\caption{$v_1(0,0,0,0)$ as a function $C$ and $G$.}
\label{fig:surfaceplots}
\end{figure}

\setcounter{figure}{0}
\begin{figure}
\centering
\begin{minipage}{.5\textwidth}
   \centering
  \includegraphics[width=.9\linewidth]{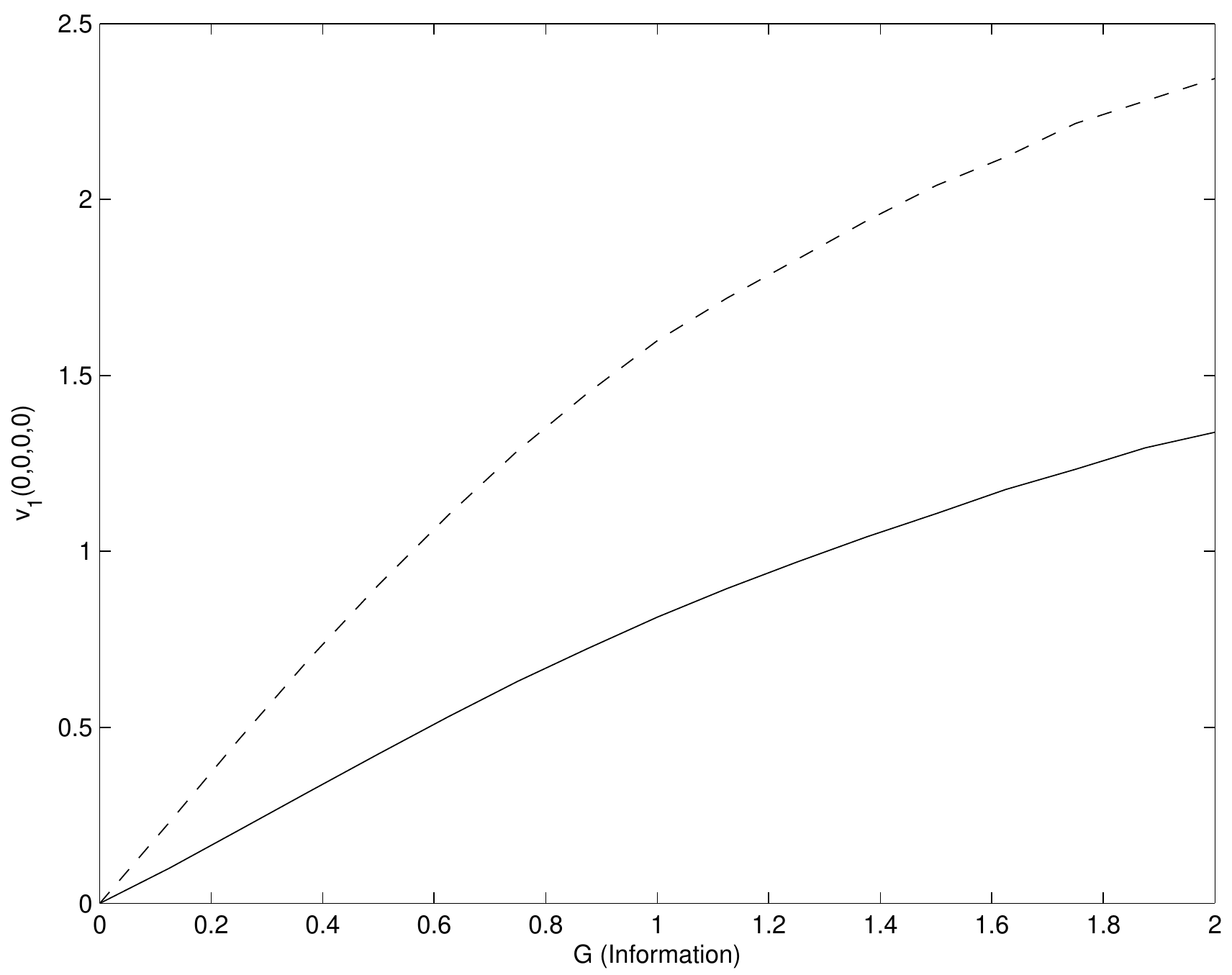}
 \caption{figure}{(a)~$C=1$, }
\end{minipage}%
\begin{minipage}{.5\textwidth}
   \centering
    \includegraphics[width=.9\linewidth]{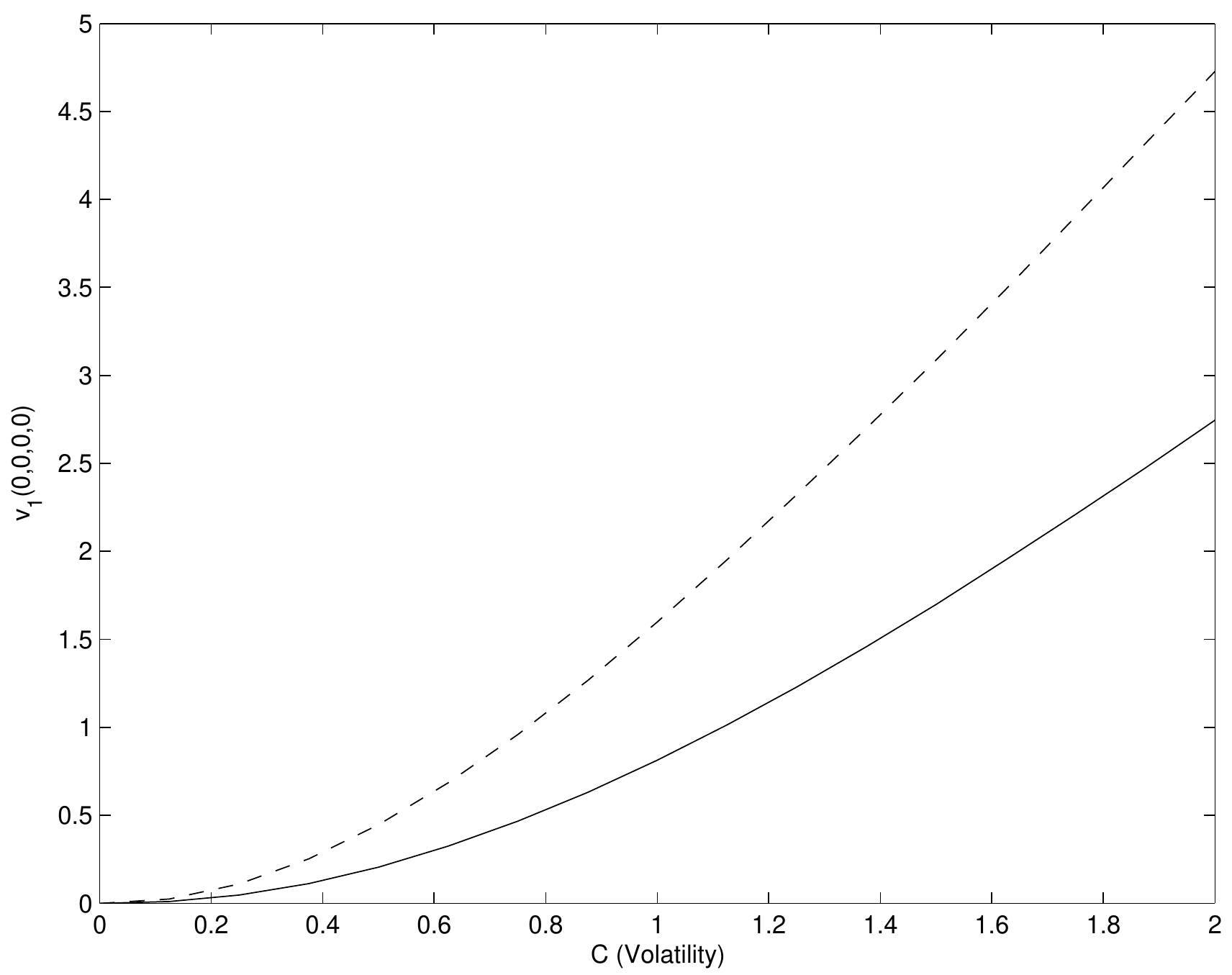}
  \caption{figure}{(b)~$G=1$}
\end{minipage}
\caption{Cross sections of Figures \ref{fig:surfaceplots} (a) (solid) and \ref{fig:surfaceplots} (b) (dashed).}
\label{fig:crosssections}
\end{figure}

\subsection{The influence of volatility of the underlying signal X}
As for the amount of available information, the value function $v_1$ is monotonically increasing with respect to the volatility of the underlying signal. This behaviour comes from the fact that the possible gain increases when the chance of the signal being high increases, while the risk of making big losses is eliminated by the possibility for the controller to turn the facility off (it is here relevant to compare to the monotonicity of the value function of an American option). In total, this makes the value function increase. Similarly to the above, $C=0$ yields $v_1=0$, since $C=0$ implies $X_t=m_0=0, \forall t \in [0,T]$. This is also confirmed by the figures.

Figure \ref{fig:crosssections} (b) shows the intersections of Figure \ref{fig:surfaceplots} (a) and (b) at $G=1$ and it indicates that $v_1$ is a convex function of $C$ for $G$ fixed. In this case,
the volatility of the underlying $X$ varies, but the proportion of the information of $X$ contained in the observation $Y$ stays unchanged.
$C=0$ means that the underlying $X$ is deterministic, in this case the observation $Y$ is useless because $X$ can be known deterministically from its initial value and its deterministic dynamics.

When we increase $C$, the underlying process $X$ becomes random, which means that we are not able to determine the position of $X$ merely from its initial data. In this case the observation $Y$ begins to play a role as it contains information about the now unknown signal $X$. The value function $v_1$ should become larger as $C$ increases, as the volatility of $X$ can make it grow significantly, and this growth can be exploited by the manager thanks to the information obtained through the process $Y$. The impact of observing $Y$ is not dramatically significant for small values of $C$ as for small $C$, the unknown $X$ is only modestly volatile and can be well estimated by considering only the deterministic part of its dynamics. However, when $C$ continues growing, making $X$ more and more volatile, less and less of the signal $X$ can be known based only on its initial values. The observation $Y$ hence becomes increasingly important as it provides insight on how to optimally manage the facility in a now highly volatile environment. This explains the convexity of the curves in Figure \ref{fig:crosssections}~(b).

\subsection{Influence of the drift of the underlying process}

In the discussion above, the drift $F$ of the underlying signal process is assumed to be $0$. We conclude this numerical example by briefly discussing the impact of the drift of the underlying signal, i.e., what happens when the drift $F \not = 0$. In the case of constant coefficient functions exemplified here, we will assume that $F=1$.  Obviously, a positive drift of the signal should have a positive impact on the value function $v_1$ since $f_1$ is monotonically increasing in $x$. This behavior is also observed in Figure \ref{fig:surfaceplots} (b) and Figure \ref{fig:crosssections}.

\setcounter{equation}{0} \setcounter{theorem}{0}
\section{Summary and future research}
\noindent
In this paper we introduced and studied an optimal switching problem under partial information. In particular, we examined, in detail, the case of a Kalman-Bucy system and we constructed a numerical scheme for approximating the value function. We proved the convergence of our numerical approximation to the true value function using stochastic filtering theory and previous results concerning full information optimal switching problems. Through a numerical example we showed the influence of information on the value function, and after comparison with a deterministic PDE method we could conclude that our numerical scheme gives a reliable estimate of the value function for computationally manageable parameter values.

It should be noted that, although we obtain the complete convergence result for the Kalman-Bucy setting, parts of the analysis does not rely on the linear structure. To the authors' knowledge, the difficulties of extending the complete result to the non-linear setting are twofold. Firstly, we rely heavily on the Gaussian structure and explicit expression of the measure $\pi$ to obtain the Lipschitz continuity of the value function, and the Lipschitz property is crucial in the subsequent convergence proof. For the general non-linear setting, however, we do not explicitly know the distribution $\pi$ and are not able to ensure Lipschitzness. Secondly, apart from the temporal parameter $t$, in the Kalman-Bucy setting the value function depends only on three parameters, $m$, $\theta$ and $Y$. However, in the non-linear setting, we may have to use the particle method to approximate $\pi$ and subsequently construct the value function based on this approximating measure. In this case the number of parameters that the value function will depend on will be at least equal to the number of particles in the approximating measure. Hence, the numbers of parameters  may not be fixed and can be arbitrarily large in this non-linear setting.

The two problems discussed above are challenges when generalizing our analysis to the non-linear setting and naturally lead to interesting directions for future research. To be specific, in future studies we intend to extend our study to the non-linear setting and, in this generalized setting, develop a computationally tractable numerical scheme for estimating the value function.


%


\begin{thebibliography}{99}



\bibitem[ACLP12]{ACLP12}
R. Aid, L. Campi, N. Langren\'e, H. Pham,
\textit{A probabilistic numerical method for optimal multiple switching problem and application to investments in electricity generation},
arXiv:1210.8175v1, 2012.


\bibitem[AF12]{AF12}
B. El-Asri, I. Fakhouri,
\emph{Optimal multi-modes switching with the switching cost not necessarily positive},
arXiv:1204.1683v1, 2012.

\bibitem[AH09]{AH09}
B. El-Asri, S. Hamad\`ene,
\emph{The finite horizon optimal multi-modes switching problem: The viscosity solution approach}
Applied Mathematics \& Optimization, {\bf 60} (2009), 213-235.

\bibitem[BC09]{BC09}
A. Bain, D. Crisan,
\textit{Fundamentals of stochastic filtering},
Stochastic Modelling and Applied Probability, vol. 60, Springer, 2009.




\bibitem[CR11]{CR11}
D.  Crisan, B. Rozovsky, editors,
\emph{The Oxford Handbook of Nonlinear Filtering},
Oxford University Press, 2011.



\bibitem[DH09]{DH09}
B. Djehiche, S. Hamad\`ene,
\emph{On A Finite Horizon Starting And Stopping Problem With Risk Of Abandonment},
International Journal of Theoretical and Applied Finance, {\bf 12} (2009), 523-543.

\bibitem[DHP10]{DHP10}
B. Djehiche, S. Hamad\`ene, A. Popier,
\emph{A Finite Horizon Optimal Multiple Switching Problem},
SIAM Journal on Control and Optimization, {\bf 48} (2010),  2751-2770


\bibitem[FN10]{FN10}
M. Fischer, G. Nappo,
\textit{On the moments of the modulus of continuity of Ito processes}
Stochastic Analysis and Applications, {\bf 28} (2010), 103-122.

\bibitem[GKP12]{GKP12}
P. Gassiat, I. Kharroubi, H. Pham,
\textit{Time discretization and quantization methods for optimal multiple switching problem},
arXiv:1109.5256v2, 2012.

\bibitem[HM12]{HM12}
S. Hamad\`ene, M.A. Morlais,
\emph{Viscosity Solutions of Systems of PDEs with Interconnected Obstacles and Multi-Modes Switching Problem},
arXiv:1104.2689v2

\bibitem[HT07]{HT07}
Y. Hu, S. Tang,
\emph{Multi-dimensional BSDE with Oblique Reflection and Optimal Switching},
Probability Theory and Related Fields, {\bf 147} (2010), 89-121.

\bibitem[HZ10]{HZ10}
S. Hamad\`ene, J. Zhang,
\emph{Switching Problem and Related System of Reflected Backward SDEs},
Stochastic Processes and their Applications, {\bf 120} (2010), 403-426.


\bibitem[KB61]{KB61} R. Kalman, R. Bucy,
\emph{New results in linear filtering and prediction theory},
Journal of Basic Engineering,
83D:95-108, 1961.

\bibitem[KP92]{KP92} P. E. Kloeden, E. Platen,
\emph{Numerical Solution of Stochastic Differential Equations},
Springer-Verlag, Berlin, 1992.



\bibitem[L09]{L09}
M. Ludkovski,
\textit{A simulation approach to optimal stopping under partial information},
Preprint, 2009.

\bibitem[LNO12]{LNO12} N.L.P. Lundstr\"{o}m, K. Nystr{\"o}m, M. Olofsson,
\textit{Systems of variational inequalities in the context of Optimal Switching Problems and Operators of Kolmogorov Type},
to appear in Annali di Mathematica Pura ed Applicata.

\bibitem[O14]{O14}
M. Olofsson,
\textit{A PDE approach to optimal switching under partial information},
In preparation, 2014.




%
\bibitem[PVZ09]{PVZ09}
H. Pham, V. L. Vath, X. Y. Zhou,
\emph{Optimal Switching over Multiple Regimes}
SIAM journal on control and optimization, {\bf 48} (2009), 2217-2253.
%


\end{thebibliography}
\end{document}